%------------PACKAGES, CLASSES, AND LIBRARIES---------------%
\documentclass[reqno]{amsart} %REQNO = For left-aligned equation enumeration.
\usepackage[headings]{fullpage}
\usepackage{amsmath} 
\usepackage{paralist}
\usepackage{amssymb} 
\usepackage{amsthm}
\usepackage{amsfonts}
\usepackage[centertags]{amsmath}
\usepackage{fullpage}
\usepackage{comment} 
\usepackage{bm}
\usepackage{thmtools}
\usepackage{thm-restate}
\linespread{1.18}
\usepackage{hyperref, float}
\hypersetup{colorlinks=true,citecolor=purple,linkcolor=purple}

\usepackage[backrefs]{amsrefs}

%\makeatletter
%\@addtoreset{equation}{section}
%\def\theequation{\thesection.\@arabic \c@equation}

%\def\theenumi{\@roman\c@enumi}

%\makeatother

%%Geometry
\usepackage{tikz-cd}
\usetikzlibrary{positioning, calc}
\usepackage{caption}
\usepackage{subcaption}
\usepackage{graphicx}
\usepackage{mathrsfs}
\usepackage[export]{adjustbox} %To add frames to figures.
\usepackage{xcolor}
\usepackage[margin=1.25in]{geometry}
\usepackage{amscd}
\usetikzlibrary{shapes}
\usetikzlibrary{arrows}
\usetikzlibrary{positioning}

\tikzstyle{main} = [draw, circle, draw=black, fill=black, inner sep = 0pt, minimum size = 6pt]

%------Susan's Theorem environment

\theoremstyle{definition}
\newtheorem{theorem}{Theorem}[section]

\newtheorem{lemma}[theorem]{Lemma}

\newtheorem{definition}[theorem]{Definition}
\newtheorem{corollary}[theorem]{Corollary}

%\newtheorem{remark}[theorem]{Remark}

%----------Cory's Theorem Environment-----------%
\theoremstyle{definition}
\newtheorem*{thm*}{Theorem}

\newtheorem{prop}[theorem]{Proposition}

\newtheorem{example}[theorem]{Example}

\newtheorem{remark}[theorem]{Remark}

\newcommand{\C}{\mathbb{C}}

\newcommand{\Q}{\mathbb{Q}}

\newcommand{\ass}{\text{Ass}}

\newcommand{\Min}{\text{Min}}

%\newcommand{\height}{\text{ht}}

%-----------Special Mathematics Operators----------%
\DeclareMathOperator{\Spec}{Spec}
\DeclareMathOperator{\spec}{Spec}
\DeclareMathOperator{\height}{ht}

%--------------Shortcut Commands---------------%

%\newcommand{\les}[1]{\le_{\str {#1}}}
%\newcommand{\less}[1]{<_{\str {#1}}}

%\title{Specs of UFDs}
%\author{C. Colbert, S. Loepp}
%\date{September 2021}

\begin{document}

\title[Dimension-Preserving Saturated Embeddings]{Dimension-Preserving Saturated Embeddings of Finite Posets into the Spectra of Noetherian UFDs}
\author{David Baron and S. Loepp}

%\maketitle

\begin{abstract}
Given a finite poset $X$, we find necessary and sufficient conditions for there to exist a local Noetherian UFD $A$ and a saturated embedding of posets $\phi : X \longrightarrow \spec(A)$ such that $\dim(X)=\dim(A)$. 
The conditions imposed on \( X \) in our characterization are remarkably mild, demonstrating that there is a large class of finite posets that can be embedded into the spectrum of a local Noetherian UFD of the same dimension as $X$ in a way that preserves saturated chains.
%As a consequence, we characterize dimension-preserving saturated embeddings of finite posets into the prime spectra of Noetherian UFDs. 
We also show that given any finite poset $Y$, there exists a semi-local quasi-excellent ring $S$ and a saturated embedding $\psi: Y \longrightarrow \spec(S)$ such that if $z$ is a minimal element of $Y$, then $\psi(z)$ is a minimal prime ideal of $S$ and the coheight of $\psi(z)$ is the same as the length of the longest chain in $Y$ that starts at $z$ and ends at a maximal element of $Y$. 
\end{abstract}

\maketitle

\section{Introduction}

A central goal in commutative algebra is to understand the structure of the prime spectrum of a ring. Given a commutative ring \( R \), its prime spectrum, denoted by \(\operatorname{Spec}(R)\), is the set of all prime ideals of \( R \). There are several approaches to studying this set.  In this paper, we regard \(\operatorname{Spec}(R)\) as a partially ordered set (poset) under inclusion, a perspective that has been extensively explored (for a nice survey on the topic, see \cite{wiegand}). One of the central open questions is to find conditions for when a given poset \( X \) is isomorphic, as a poset, to the spectrum of a commutative ring $R$ where $R$ has a particular algebraic property. In \cite{Hochster}, Hochster finds necessary and sufficient conditions for a poset to be isomorphic to the prime spectrum of a commutative ring. This result is striking and prompts further inquiry, particularly within the realm of Noetherian rings. Specifically, it is natural to ask which posets can be realized as the prime spectrum of a Noetherian ring. This question remains open even in the case where the dimension of the poset is two.
Given the difficulty of this problem, a more tractable version is to focus on finite partially ordered sets. Specifically, for a given finite poset \( X \), one may ask whether there exists a Noetherian ring \( R \) such that \( X \) can be embedded into a subposet of \(\operatorname{Spec}(R)\). It is important to note that when \(\dim(X) \ge 2\), any embedding of a finite poset $X$ into \(\operatorname{Spec}(R)\) must be strict. Indeed, a well-known property of Noetherian rings is that if \( P_0 \subsetneq P_1 \subsetneq P_2 \) are prime ideals in \(R\), then there exist infinitely many prime ideals \( Q \) in $R$ with \( P_0 \subsetneq Q \subsetneq P_2 \).

In 1979, Heitmann \cite{HeitmannNoncatenary} achieved a remarkable breakthrough by proving that any finite partially ordered set can be embedded into the prime spectrum of some Noetherian ring in a way that saturated chains are preserved. Heitmann's discovery not only addresses the finite case in a definitive manner but also spurred further research into the poset structures present in \(\operatorname{Spec}(R)\) for rings with desirable algebraic or geometric properties. The concept of a map between posets that preserves saturated chains proves very useful in this context. To formalize this kind of map, we use the notion of saturated embeddings as defined in \cite{Colbert}.
%We note that a saturated embedding is a stronger condition than an embedding. In poset embeddings, extra inclusion relations can appear when prime ideals from the ring, outside the image of the poset map, introduce unexpected orderings. 
What makes saturated embeddings remarkable is that they not only preserve saturated chains from the poset in the prime spectrum of the ring, but they also ensure that no new order relations arise among the images of elements in the poset. In other words, the ordering of prime ideals in the image reflects exactly the ordering in the original poset. 
%What makes saturated embeddings remarkable is that they preserve saturated chains from the poset in the prime spectrum of the ring. 
We provide a rigorous definition of a saturated embedding in Section $\ref{posetprelims}$. 

Saturated embeddings can be used to construct rings that are not catenary. Recall that a ring is called a catenary ring if every saturated chain of prime ideals between any two fixed prime ideals has the same length. This property is fundamentally linked to the poset structure of $\spec(R)$ since the arrangement of prime ideals reflects the chain conditions within the ring. Early in the twentieth century, it was widely believed that Noetherian integral domains were necessarily catenary; however, Nagata's construction in 1956 \cite{Nagata} of a family of noncatenary Noetherian local domains provided a striking counterexample. In a similar vein, Ogoma \cite{Ogoma} and Heitmann \cite{HeitmannUFD} constructed, respectively, a noncatenary integrally closed domain and a noncatenary unique factorization domain (UFD). These developments naturally lead to the question: How noncatenary can a ring with a ``nice property" be? Investigating the possible saturated embeddings of a finite poset into the prime spectrum of a ring with such properties offers a promising avenue toward answering this question. For instance, complete local rings and excellent rings are catenary, thereby severely limiting the possible poset structures found within their prime spectra. In contrast, unique factorization domains, often regarded as well-behaved rings, are necessarily catenary up to dimension three but may fail to be catenary in higher dimensions. This observation motivates the following question: Given a finite poset \( X \), when does there exist a saturated embedding from \( X \) into \(\operatorname{Spec}(A)\) where \( A \) is a Noetherian UFD? If such an embedding exists, then by choosing $X$ to be noncatenary, the saturated embedding property would ensure that $A$ itself is noncatenary. 

In \cite{Colbert}, the authors provide a definitive answer, proving that if $X$ is \emph{any} finite poset, then there is a Noetherian UFD $A$ and a saturated embedding from $X$ into $\Spec(A)$. The key idea behind their proof is to embed the poset $X$ into the spectrum of a Noetherian UFD $A$ with sufficient depth, yielding a Noetherian UFD $A$ whose Krull dimension is approximately twice that of the dimension of $X$. While the result in \cite{Colbert} is striking from an order-theoretic perspective, a more complete understanding demands that we also address the relationship between the dimension of the finite poset and the dimension of the Noetherian UFD. In particular, a natural question is, given a finite poset $X$, when is there a local (Noetherian) UFD $A$ such that there is a saturated embedding from $X$ into $\Spec(A)$ and such that the dimension of $X$ is the same as the Krull dimension of $A$?
In the main theorem of this article, we fully characterize when this is possible. Specifically, in Section \ref{characterization of dimension-preserving saturated embeddings of UFDs} we present the following result.

 {\bf Main Theorem} (Theorem \ref{main theorem of characterization of embeddings of UFDs}) Let $X$ be a finite poset. Then, there exists a local (Noetherian) unique factorization domain $A$ and a saturated embedding $\phi: X \longrightarrow \spec(A)$ with dim$(X) = \dim(A)$ if and only if:
    \begin{enumerate}
        \item $X$ has a unique minimal element and a unique maximal element, and
        \item If $\dim(X) \geq 2$ and $x, y  \in X$ are such that $\height(x)=1$, $x< y,$ and $z \in X$ with $x \leq z \leq y$ implies that $z = x$ or $z = y$, then $\height(y)=2$.
    \end{enumerate}

\noindent The conditions imposed on \( X \) in our characterization are extremely mild, demonstrating that almost all finite posets with a unique minimal element and a unique maximal element, even those that are very noncatenary, can be embedded into the spectrum of a local (Noetherian) UFD of the same dimension as $X$ in a way that preserves saturated chains. After stating and proving our main theorem, we present two corollaries.  The first, Corollary \ref{corollary UFD for non local posets}, provides necessary and sufficient conditions on a finite poset $X$ for there to exist a (not necessarily local) Noetherian UFD $A$ such that there is a dimension-preserving saturated embedding from $X$ to $\Spec(A)$. In the second corollary (Corollary \ref{dimplusone}), we show that if $X$ is any finite poset, then there exists a Noetherian UFD $A$ and a saturated embedding from $X$ to $\Spec(A)$ where the dimension of $A$ is one more than the dimension of $X$.
%can very noncatenary. Moreover, by constructing posets and rings with matching dimensions, our work takes an important step toward resolving the broader question of when a poset \( X \) is isomorphic to the entire spectrum \(\operatorname{Spec}(R)\) of a ring \( R \) with interesting algebraic and geometric properties.

Showing that the conditions stated in our main theorem are necessary is straightforward. Thus, the bulk of our work is devoted to establishing that the conditions are indeed sufficient. In the cases where the poset \(X\) has dimension zero or one, the desired embeddings can be easily constructed. Hence, our main focus lies on the more challenging scenario when \(\dim(X) \ge 2\). In this context, when $X$ satisfies the necessary conditions, we construct a local (Noetherian) UFD \(A\) and a saturated poset embedding
\(\psi: X \longrightarrow \operatorname{Spec}(A)
\) that preserves dimension.

Our approach is divided into three main parts. Let \(X\) be a finite poset of dimension \(n \geq 2\) satisfying conditions one and two of our main theorem. We define \(X^{n-1}\) to be the subposet of $X$ consisting of all elements of $X$ with height at least one, endowed with the order inherited from \(X\). The first step is to extend results from \cite{Colbert} to obtain a reduced quasi-excellent local ring \(B'\) and a dimension-preserving saturated embedding \(\phi': X^{n-1} \longrightarrow \operatorname{Spec}(B').\) Specifically, we show in Theorem $\ref{local quasi-excellent ring codimension preserving theorem}$ that for any finite poset $Y$ with a unique maximal node, there exists a reduced quasi-excellent local ring $S$ and a $\emph{coheight-preserving}$ saturated embedding $\Psi: Y \longrightarrow \spec(S)$ along the minimal prime ideals of $S$. A coheight-preserving map is a stronger notion than a dimension-preserving map, which we explain in detail in Section $\ref{quasi-excellent ring construction}$.

The second key step is to construct a local ring $B$ from $B'$ so that there exists a saturated embedding $\psi: X \longrightarrow \spec(B)$ (Theorem~$\ref{local domain with desired properties}$). The main challenge at this stage is to construct $B$ with the necessary algebraic and geometric properties to allow the construction of a local UFD $A \subseteq B$, such that $A$ and $B$ share the same completion. The last step is to exploit properties of $B$ and use tools from \cite{Colbert}, \cite{HeitmannUFD} and \cite{Bonat} to construct a local UFD $A$ satisfying certain properties. Specifically, for any element \(x \in X\) of height two or greater, we verify that the image \(\psi(x)\) satisfies certain conditions that allow us to construct $A$ so that it contains a generating set for $\psi(x)$. For elements of $X$ of height one, we adapt analogous techniques from \cite{Bonat} to carefully select elements from specific height one prime ideals of \(B\) to adjoin to $A$, ensuring that the inclusion relations inherent in \(B\) are preserved in \(A\). These technical details are elaborated in the subsequent sections. Finally, we show that \(A\) is the desired UFD and that there exists a dimension-preserving saturated embedding $\phi$ from \(X\) into \(\operatorname{Spec}(A)\). 

In Section~\ref{posetprelims}, we cover essential background material on partially ordered sets. In Sections~\ref{Gluing and Retraction Preliminaries} and~\ref{Gluing and Retraction in Rings} we introduce two operations on posets from \cite{Colbert} (along with their ring-theoretic analogues) that are central to establishing that there exists a coheight-preserving saturated embedding from any poset with a unique maximal element into the prime spectrum of a quasi-excellent local ring (Theorem~\ref{local quasi-excellent ring codimension preserving theorem}). In Section~\ref{local domain construction}, we construct a local domain with specific geometric properties and apply this result in Section~\ref{ufd construction} to construct a unique factorization domain and verify the sufficiency of the conditions in Theorem~\ref{main theorem of characterization of embeddings of UFDs}. Finally, in Section~\ref{characterization of dimension-preserving saturated embeddings of UFDs}, we prove Theorem~\ref{main theorem of characterization of embeddings of UFDs}, thus fully characterizing when a finite poset \(X\) admits a dimension-preserving saturated embedding \(\psi: X \longrightarrow \spec(A)\) for some local UFD \(A\).

\vspace{.2cm}

\noindent {\bf Notation and Conventions}. All rings in this paper are assumed to be commutative with unity. We use the standard convention of calling a partially ordered set a poset. If $X$ is a poset and $x$ is an element of $X$, we will sometimes say that $x$ is a node of $X$. As a convention for this paper, when we say that $X$ is a poset, it will be assumed that $X$ is not empty. When we write that for a set $Y$, $|Y| = c$, we mean that the cardinality of $Y$ is equal to the cardinality of $\mathbb{R}$.
We use $\min X$ to denote the set of minimal elements of a poset $X$ and $\Min(R)$ to denote the set of minimal prime ideals of the ring $R$. When we say that a ring $R$ is quasi-local, we mean that $R$ has exactly one maximal ideal and need not be Noetherian.  We use $(R,M)$ to denote a quasi-local ring with maximal ideal $M$. A local ring is a quasi-local ring that is Noetherian.  If $(R,M)$ is a local ring, we use $\widehat{R}$ to denote the $M$-adic completion of $R$. Finally, if $R$ is a ring and $P$ is a prime ideal of $R$, we define the coheight of $P$ to be the Krull dimension of the ring $R/P$.

\section{Poset Preliminaries}\label{posetprelims}

In this section, we introduce basic definitions related to partially ordered sets and we define the notion of a dimension-preserving saturated embedding. 
%\begin{definition}[Partially Ordered Set]
%Let \(X\) be a set and let \(\leq\) be a binary relation on \(X\). We say that the relation \(\leq\) is a partial order if it satisfies:
%\begin{enumerate}
%  \item \emph{Reflexivity} if for every \(x \in X\), we have \(x \leq x\).
%  \item \emph{Antisymmetry} if for all \(x, y \in X\), \(x \leq y\) and \(y \leq x\) imply \(x = y\).
%  \item \emph{Transitivity} if for all \(x, y, z \in X\), \(x \leq y\) and \(y \leq z\) imply \(x \leq z\).
%\end{enumerate}
%A \emph{partially ordered set} (poset) is a pair \((X, \leq)\) where \(\leq\) is a partial order.
%\end{definition}
\begin{definition}
    Let $X$ be a poset, and let $x,y \in X$. We say $y$ \emph{covers} $x$, and we write $x <_c y$, if $x < y$ and for all $z\in X$, if $x\leq z\leq y$, then $z=x$ or $z=y$.
\end{definition}

Given a poset $X$ and two elements $x,y \in X$ with $x < y$, we define what it means for a chain that starts at $x$ and ends at $y$ to be saturated.  Note that our definition is analogous to the definition of a saturated chain of prime ideals in a ring.

\begin{definition}
Let $X$ be a poset and let $x,y \in X$ with $x < y$.  We say that the chain $$x = x_0 < x_1 < \cdots < x_k = y$$ is \emph{saturated} if $x_{i}$ covers $x_{i-1}$ for every $i = 1,2 \ldots, k$.
\end{definition}

%Analogous to the ring setting, we define a \emph{saturated chain} in a finite poset \(X\) as follows: given \(x, y \in X\), the chain \(x \leq y\) is \emph{saturated} if \(y\) \emph{covers} \(x\). The \emph{height} of a node \(x  \in X\), denoted \(\operatorname{ht}(x)\), is defined as the length of the longest saturated chain that begins at a minimal element \(y\) of \(X\) and ends at \(x\). 

%With this notion in place, we now introduce a particular subposet that plays a central role in the construction of the UFD described in Theorem~\ref{main theorem of characterization of embeddings of UFDs}.

%\begin{definition}[$n-1$ skeleton of a poset]\label{n-1 skeleton}
%    Let $X$ be a poset of dimension $n \geq 2$. The $n-1$ skeleton, denoted as $X^{n-1}$, is the induced poset of $X$ given by $$X^{n-1}=\{x \in X: \height(x) \geq 1 \}$$
%    where $x \leq y$ in $X^{n-1}$ if and only if $x \leq y$ in $X$.
%\end{definition}

\begin{definition}
    Let $X$ be a poset and let $C$ be a nonempty finite chain in $X$. The \emph{length} of $C$, denoted $L(C)$, is defined to be $|C|-1$
    where $|C|$ means the cardinality of $C$. 
\end{definition}

We next define the height of an element of a poset and the dimension of a poset.  Note that these definitions are analogous to the definition of the height of a prime ideal of a ring and the definition of the dimension of a ring.  

\begin{definition}
Let $X$ be a poset and let $x \in X$. The \emph{height} of $x$, denoted \(\operatorname{ht}(x)\), is defined as the length of the longest saturated chain that begins at a minimal element \(y\) of \(X\) and ends at \(x\).
\end{definition}

%As with the dimension of a Noetherian ring, there are several equivalent ways to define the dimension of a poset. We adopt the following definition.

\begin{definition}
The \emph{dimension} of a nonempty poset \(X\), denoted $\dim(X)$, is defined as
\[ \dim(X) = 
\sup\{L(C)\mid  C \ \text{is a finite chain of elements in } X\}.
\]
\end{definition}

We end this section by defining several kinds of maps between posets.

\begin{definition} Let $X$ and $Y$ be posets. We say $f : X \longrightarrow Y$ is a \emph{poset map} if for all $x,y \in  X, x \leq y$ implies $f(x) \leq f(y)$. We say a poset map $f : X \longrightarrow Y$ is a \emph{poset embedding} if for all $x,y \in X, f(x) \leq f(y)$ implies $x \leq y$. A surjective poset embedding from $X$ onto $Y$ is called a \emph{poset isomorphism}.
\end{definition}
Suppose that $f : X \longrightarrow Y$ is a poset embedding and let $x,y \in X$ with $f(x) = f(y)$. Then $x \leq y$ and $y \leq x$ and so $x = y$. 
Hence, poset embeddings are injective, and so a surjective poset embedding from $X$ onto $Y$ is bijective. It therefore makes sense to call it a poset isomorphism.
%We conclude the section by defining a stronger version of a poset embedding and an essential lemma. 
\begin{definition}[Definition 2.3, \cite{Colbert}]
Let $X$ be a poset, and let $x,y \in X$. We say $f:X\longrightarrow Y$ is a \emph{saturated embedding} if $f$ is a poset embedding and for all $x,y \in X$, if $y$ covers $x$, then $f(y)$ covers $f(x)$ in $Y$. If $Z \subseteq Y$ is a subposet (i.e. $Z$ is a poset under the same order relation on $Y$) we say $Z$ is a \emph{saturated subset} of $Y$ if for all $u,v\in Z$, whenever $v$ covers $u$ in $Z,$ it also covers $u$ in $Y$.
\end{definition}

It is not difficult to show (see Remark 2.4 in \cite{Colbert}) that if $f : X \longrightarrow Y$ is a saturated embedding, then $y$ covers $x$ in $X$ if and only if $f(y)$ covers $f(x)$ in $Y$. Note that saturated embeddings preserve saturated chains.  That is, if $f : X \longrightarrow Y$ is a saturated embedding and $C$ is a saturated chain of $X$, then $f(C)$ is a saturated chain of $Y$.

%The following lemma shows that the existence of a saturated embedding  $f:X\longrightarrow Y$ is equivalent to $X$ being isomorphic to a saturated subset of $Y$.

The next lemma shows that if $f : X \longrightarrow Y$ is a saturated embedding then $X$ is isomorphic to a saturated subset of $Y$.

\begin{lemma}[Lemma 2.7, \cite{Colbert}]\label{poset isomorphism from saturated embedding}
    If $f:X\longrightarrow Y$ is a saturated embedding of posets, then $f$ is a poset isomorphism from $X$ onto $f(X)$, and $f(X)$ is a saturated subset of $Y$.
\end{lemma}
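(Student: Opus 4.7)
The plan is to verify the two assertions separately, starting with the easier one. Since $f$ is a poset embedding, it is already shown in the paragraph just before the statement that $f$ is injective. Restricting the codomain to $f(X)$ then gives a bijection $f : X \longrightarrow f(X)$. To upgrade this to a poset isomorphism, I need that for all $x,y \in X$, the relation $x \le y$ holds if and only if $f(x) \le f(y)$. The forward direction is the definition of a poset map, and the backward direction is the definition of a poset embedding; so the first conclusion is essentially bookkeeping from the definitions.

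The substantive claim is that $f(X)$ is a saturated subset of $Y$. I would argue contrapositively on the cover relation. Fix $u,v \in f(X)$ and suppose $v$ covers $u$ in the induced order on $f(X)$; write $u = f(x)$ and $v = f(y)$ (uniquely, by injectivity). From $u < v$ in $f(X)$ together with the embedding property we get $x < y$ in $X$. The key sub-claim is that $y$ covers $x$ in $X$: if not, there would exist some $z \in X$ with $x < z < y$, and then applying the poset-map property together with injectivity of $f$ yields $f(x) < f(z) < f(y)$ in $Y$ with $f(z) \in f(X)$, contradicting the assumption that $v$ covers $u$ in the induced order on $f(X)$. Hence $y$ covers $x$ in $X$, and now the defining property of a saturated embedding promotes this to: $f(y)$ covers $f(x)$ in $Y$, i.e.\ $v$ covers $u$ in $Y$, as required.

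The only mild subtlety, and the step I would flag as the place to be careful, is keeping the ambient poset straight when talking about covers: the hypothesis provides covers in $f(X)$ with its inherited order, while the conclusion needs covers in $Y$. The argument above bridges these by pulling the hypothesis back to a cover in $X$ (using injectivity plus embedding) and then pushing it forward to $Y$ (using the saturated-embedding hypothesis). No further machinery is required.
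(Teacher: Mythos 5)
Your proof is correct. The paper states this lemma without proof (it is quoted from \cite{Colbert}), and your argument is the natural direct one: the isomorphism claim is immediate from injectivity plus the embedding property, and for saturation you correctly pull a cover in $f(X)$ back to a cover in $X$ (using injectivity to get strictness of $f(x)<f(z)<f(y)$) and then push it forward to a cover in $Y$ via the saturated-embedding hypothesis.
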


Finally, we define the type of poset map that is the main focus of this article.

\begin{definition}
    We say a saturated embedding of posets $f: X \longrightarrow Y$ is \emph{dimension-preserving} if $\dim(X)=\dim(Y)$.
\end{definition}

Let $X$ be a finite poset. In Theorem \ref{main theorem of characterization of embeddings of UFDs}, we find necessary and sufficient conditions for there to exist a local UFD $A$ and a dimension-preserving saturated embedding of posets $\phi : X \longrightarrow \spec(A)$. By Lemma \ref{poset isomorphism from saturated embedding}, $\phi$ is a poset isomorphism from $X$ onto $\phi(X)$ and $\phi(X)$ is a saturated subset of $\Spec(A)$. Informally, this means that we have characterized exactly when there is a local UFD $A$ with the same dimension as $X$ such that $X$ occurs as part of $\spec(A)$ when $\spec(A)$ is viewed as a partially ordered set with respect to inclusion. Along the way, we also show that if $Y$ is a finite poset with a unique maximal node $y$, then there is a quasi-excellent local ring $S$ and a saturated embedding $\psi : Y \longrightarrow \spec(S)$ such that if $z$ is a minimal element of $Y$, then $\psi(z)$ is a minimal prime ideal of $S$ and the coheight of $\psi(z)$ is the same as the length of the longest chain in $Y$ that starts at $z$ and ends at $y$ (see Theorem \ref{local quasi-excellent ring codimension preserving theorem}). 

\section{Gluing and Retraction Preliminaries}\label{Gluing and Retraction Preliminaries}

In this section, we introduce important concepts and results from \cite{Colbert} that we ultimately use to prove Theorem \ref{main theorem of characterization of embeddings of UFDs}. In particular, we discuss in detail two poset operations introduced in Section 5 of \cite{Colbert}: gluing and retraction. These operations and the results associated with them will play a central role throughout this paper. We begin by defining the gluing operation. To do so, we must first define compatible maps and complete subsets.
%We also include results from \cite{Colbert} relating to these important operations.

%Our objective in this section is to lay the foundations to prove two of our main results: given a finite poset $Y$ with a unique maximal node, there exists reduced quasi-excellent ring $S$ and a codimension-preserving saturated embedding $\Psi: Y \longrightarrow \spec(S)$ (Theorem $\ref{local quasi-excellent ring codimension preserving theorem}$) and, given a finite poset $X$ satisfying mild conditions, there exists a local quasi-excellent domain $B$ satisfying specific geometric properties such that $X$ is isomorphic to a saturated subset of the prime spectrum of $B$ and $\dim(X)=\dim(B)$ (Theorem $\ref{local domain with desired properties}$). As discussed in the introduction, these theorems are fundamental to our three-step approach for proving that the conditions given in Theorem~$\ref{main theorem of characterization of embeddings of UFDs}$ are indeed sufficient.

%\subsection{Poset Preliminaries}\label{poset preliminaries}

%We discuss in detail two poset operations introduced in Section 5 of \cite{Colbert}: gluing and retracting. These operations play a central role throughout the paper. We conclude this section with a remarkable theorem from Colbert and Loepp in \cite{Colbert}.

%\subsubsection{Gluing}
\begin{definition}[Definition 5.3, \cite{Colbert}]
    Let $X$ and $Y$ and be posets, and let $g : X \longrightarrow Y $ be a surjective poset map. If $Z$ is a poset and $h : X \longrightarrow Z$ is any poset map, we say $h$ is \emph{compatible} with $g$ if, for each $y \in Y$, the restriction of $h$ to $g^{-1}(y)$ is constant.
\end{definition}
\begin{definition}
[Definition 2.5, \cite{Colbert}]     If $C\subseteq Y$ is a subposet of $Y$, we say that $C$ is a \emph{complete subset} of $Y$ if for all $u,v\in C$, if $u \leq y \leq v$ for some $y \in Y$, then $y \in C$.
\end{definition}
\begin{definition}[Definition 5.4, \cite{Colbert}]
    Let $X$ be a poset. We say that the poset $Y$ is a \emph{gluing} of $X$ with gluing map $g:X
    \longrightarrow Y$ if whenever $h:X\longrightarrow Z$ is compatible with $g$, there exists a unique poset map $\varphi:Y\longrightarrow Z$ such that $\varphi \circ g = h$. Moreover, if $C \subseteq X$ is a complete subset of $X$ such that:
    \begin{enumerate}[(i)]
        \item $g$ is constant on $C$, and
        \item if $g(x) = g(x')$ for distinct $x,x' \in X$, then both $x,x' \in C$,
    \end{enumerate}
    then we say $Y$ is a gluing of $X$ along $C.$
\end{definition}
The following theorem from \cite{Colbert} shows that there is a gluing of a poset along any given complete subset. Moreover, the theorem provides a useful property of gluings along the given complete subset. 

 \begin{theorem}[Theorem 5.7, \cite{Colbert}]\label{theorem about order induced by gluings}
 If $(X,\leq)$ is a poset and $S$ is a complete subset of $X$, then there is a gluing $Y$ of $X$ along $S$. Moreover, if $Z$ is a gluing of $X$ along $S$ via any gluing map $h : X \longrightarrow Z$, then whenever $h(x)\leq_Z h(y)$, either $x\leq_X y$ or there exists $s,t \in S$ such that $x\leq_Xs$ and $t\leq_X y$.
 \end{theorem}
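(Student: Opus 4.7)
My plan is to construct $Y$ explicitly as a quotient of $X$, verify the universal property by hand, and then use the standard uniqueness argument for universal constructions to deduce the ``Moreover'' statement for every gluing. Define an equivalence relation $\sim$ on $X$ by $x \sim x'$ iff either $x = x'$ or $x, x' \in S$, let $Y := X/{\sim}$, and let $g : X \longrightarrow Y$ be the quotient map, so that the fibers of $g$ are the singletons outside $S$ and the full set $S$. On $Y$, I would declare $[x] \leq_Y [y]$ iff either $x \leq_X y$ or there exist $s, t \in S$ with $x \leq_X s$ and $t \leq_X y$. Well-definedness is immediate: if $x, x' \in S$, then the witnesses $s = x'$ and $t = x$ translate between representatives. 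Reflexivity is trivial, and transitivity is a four-case check depending on which halves use $S$ as a bridge; in each case one chains the inequalities in $X$ to exhibit a witness.

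The antisymmetry check is where the completeness of $S$ is used, and this is the step I expect to be the main technical obstacle. Suppose $[x] \leq_Y [y]$ and $[y] \leq_Y [x]$ with, say, $y \notin S$. Each of the four combinatorial cases yields a chain in $X$ of the form $u \leq y \leq v$ (or $u \leq x \leq v$) with $u, v \in S$; for instance, if $x \leq s_1, t_1 \leq y$ and $y \leq s_2, t_2 \leq x$, then $t_1 \leq y \leq s_2$ with $t_1, s_2 \in S$, and completeness of $S$ forces $y \in S$, a contradiction. So either $x = y$ or both lie in $S$, giving $[x] = [y]$. Thus $(Y, \leq_Y)$ is a poset. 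To verify the universal property, let $h : X \longrightarrow Z$ be compatible with $g$; by the description of the fibers, compatibility is equivalent to $h$ being constant on $S$. Setting $\varphi([x]) := h(x)$ is then well-defined, and if $[x] \leq_Y [y]$ then either $x \leq_X y$ (giving $h(x) \leq h(y)$) or $x \leq s$, $t \leq y$ for some $s, t \in S$, whence $h(x) \leq h(s) = h(t) \leq h(y)$ since $h$ is constant on $S$. Uniqueness of $\varphi$ follows from surjectivity of $g$. That the complete subset $S$ is a fiber of $g$ verifies that $Y$ is a gluing along $S$.

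For the ``Moreover'' clause, let $Z$ be any gluing of $X$ along $S$ with gluing map $h$. Conditions (i) and (ii) in the definition of gluing along $S$ show that the fibers of $h$ are exactly the fibers of $g$, so $h$ is compatible with $g$ and $g$ is compatible with $h$. Applying the universal property of $Y$ to $h$ produces a poset map $\psi : Y \longrightarrow Z$ with $\psi \circ g = h$, while applying the universal property of $Z$ to $g$ produces a poset map $\varphi : Z \longrightarrow Y$ with $\varphi \circ h = g$. Then $\psi \circ \varphi : Z \longrightarrow Z$ satisfies $(\psi \circ \varphi) \circ h = h$, and since $h$ is trivially compatible with itself, the uniqueness clause forces $\psi \circ \varphi = \operatorname{id}_Z$; symmetrically $\varphi \circ \psi = \operatorname{id}_Y$. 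Hence $\varphi$ is a poset isomorphism and therefore reflects order. If $h(x) \leq_Z h(y)$, applying $\varphi$ yields $g(x) \leq_Y g(y)$, and by construction of $\leq_Y$ this means $x \leq_X y$ or else there exist $s, t \in S$ with $x \leq_X s$ and $t \leq_X y$, completing the proof.
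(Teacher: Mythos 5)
Your proof is correct. Note that the paper does not reprove this result---it is quoted verbatim as Theorem 5.7 of \cite{Colbert}---so there is no in-text proof to compare against; your construction (quotient by the relation identifying the elements of $S$, with $[x]\leq_Y[y]$ iff $x\leq_X y$ or $x\leq_X s$ and $t\leq_X y$ for some $s,t\in S$, followed by the standard uniqueness argument for universal objects to transfer the description of $\leq_Y$ to an arbitrary gluing) is the natural one and is exactly what the ``Moreover'' clause of the statement encodes. The only nitpick is in the antisymmetry discussion: the case where both inequalities hold via the first clause produces no chain through $S$ but gives $x=y$ directly, which your final sentence does account for.
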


As is the case in \cite{Colbert}, we will primarily focus on gluing along a subset of minimal nodes of our poset. The following definitions and lemmas make this idea precise and show useful properties of height zero gluings. Note that for a poset $X$, any subset of $\min X$ is a complete subset of $X$.
\begin{definition}[Definition 5.9, \cite{Colbert}]
    If $Y$ is a gluing of $X$ along a subset $C \subseteq \min X$, we say that $Y$ is a \emph{height zero gluing} of $X.$
\end{definition}
\begin{lemma}[Lemma 5.10, \cite{Colbert}]\label{gluing preserves dimension}
    If $Y$ is a height zero gluing of a finite poset $X$ along $C$ with gluing map $g$, then the following statements are true:
    \begin{enumerate}
        \item $g(\min X)=\min Y$,
        \item $\dim(X)=\dim(Y)$, and
        \item if $g(x) <_c g(y)$, there exists $x' \in X$ such that $x' <_c y$ and $g(x') = g(x)$.
    \end{enumerate}
\end{lemma}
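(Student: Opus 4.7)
My plan is to deduce all three conclusions from Theorem~\ref{theorem about order induced by gluings} combined with the key structural consequence of $C \subseteq \min X$: the only ``nontrivial'' order relation $g(a) \leq g(b)$ in $Y$ that does not come from $a \leq b$ in $X$ arises when $a$ and some $t \in C$ witness the relation, and since elements of $C$ are minimal, the clause ``$a \leq s$ with $s \in C$'' of Theorem~\ref{theorem about order induced by gluings} forces $a = s \in C$. Thus I would first record the simplified characterization: $g(a) \leq_Y g(b)$ if and only if either $a \leq_X b$, or $a \in C$ and there exists $t \in C$ with $t \leq_X b$. I would also use property (ii) of the definition (distinct elements with the same image lie in $C$) together with $C \subseteq \min X$ to conclude that $g$ is injective on $X \setminus C$ and that $g|_C$ has image a single minimal node of $Y$.

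For (1), the inclusion $g(\min X) \subseteq \min Y$ follows by taking $x \in \min X$ and $y' = g(x') \leq g(x)$, then applying the simplified characterization: either $x' \leq x$, giving $x' = x$, or $x \in C$ and $x' \in C$, giving $g(x') = g(x)$. For the reverse inclusion, given $y \in \min Y$ with $y = g(x)$, any $x_0 \in \min X$ with $x_0 \leq x$ satisfies $g(x_0) \leq y$, so $g(x_0) = y$.

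For (2), the inequality $\dim(Y) \geq \dim(X)$ is immediate: a chain $x_0 < \cdots < x_k$ in $X$ maps to a chain in $Y$, and any equality $g(x_i) = g(x_{i+1})$ would force $x_i, x_{i+1} \in C \subseteq \min X$ by (ii), contradicting $x_i < x_{i+1}$. For the reverse, I would inductively lift a chain $y_0 < \cdots < y_k$ in $Y$ from the top down: fix any lift $x_k$ of $y_k$, and given $x_i$, use the simplified characterization on $y_{i-1} < y_i$ to produce $x_{i-1}$ with $g(x_{i-1}) = y_{i-1}$ and $x_{i-1} < x_i$ --- either directly, or by choosing $x_{i-1} = t \in C$ in the ``via $C$'' case (noting $t \neq x_i$ because their images differ). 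I expect this downward lifting argument to be the technical heart of the lemma, since one must choose the lift compatibly with the already-fixed $x_i$ rather than arbitrarily.

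For (3), assume $g(x) <_c g(y)$. Applying the lifting construction of (2) to the two-element chain $g(x) < g(y)$ with $y$ itself as the chosen lift of $g(y)$ yields $x' \in X$ with $g(x') = g(x)$ and $x' < y$. To upgrade this to $x' <_c y$, I would suppose a strict intermediate $x' < z < y$ exists, apply $g$ to get $g(x') \leq g(z) \leq g(y)$, and use the cover relation to conclude $g(z) = g(x')$ or $g(z) = g(y)$. In either case property~(ii) forces $z \in C$, hence $z \in \min X$, contradicting $x' < z$. Thus $x' <_c y$, completing the proof.
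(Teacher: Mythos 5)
Your argument is correct. Note that this paper does not prove the lemma itself --- it is imported from \cite{Colbert} (Lemma 5.10) --- so the only comparison available is with the tools the paper does record, and your route is exactly the natural one: everything follows from Theorem \ref{theorem about order induced by gluings} once you observe that $C\subseteq\min X$ collapses the clause ``$x\leq s$ for some $s\in C$'' to ``$x\in C$,'' giving your simplified characterization of the order on $Y$. The one place a reader might pause is the top-down lifting in (2): if the ``via $C$'' branch produces $x_{i-1}=t\in C\subseteq\min X$ at an intermediate stage $i-1\geq 1$, one might worry the next step gets stuck; but this cannot happen, since by part (1) the image $g(x_{i-1})=y_{i-1}$ would then be minimal in $Y$, contradicting $y_{i-2}<y_{i-1}$ --- and in any case each step of your construction is self-contained and always succeeds, so there is no gap. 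Parts (1) and (3) are also handled correctly (in (3), the key point that you lift $g(y)$ to the given element $y$ itself is stated explicitly, and the cover-upgrade via property (ii) of the gluing definition is sound).
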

In Section $\ref{quasi-excellent ring construction}$, we introduce a stronger notion of dimension-preserving saturated embeddings, which we call coheight-preserving saturated embeddings. The idea behind this concept is the following: given a Noetherian ring $R$, there are finitely many minimal prime ideals $\{P_1,\dots,P_n\}$ of $R$ and so we can consider the coheight of $P_i$ for each $i=1,\dots,n$. We translate this idea to posets and we then introduce poset maps that preserve coheight at the minimal nodes. The following definitions are essential for making this coheight-preserving property rigorous, as they provide a poset analogue of the coheight of a minimal prime ideal of a ring.
\begin{definition}
    Let $X$ be a poset of dimension $n \geq 2$, and let $x \in X$. The \emph{up set} of $x$, denoted as $x_X^{\uparrow}$, is the induced poset of $X$ given by $$x_X^{\uparrow}=\{y \in X \mid x \leq y \}$$
    where $y \leq z$ in $x_X^{\uparrow}$ if and only if $y \leq z$ in $X$. When the underlying poset is clear, we simply write $x^{\uparrow}$ to denote the up set of $x$ in $X$.
\end{definition}
\begin{lemma}\label{dimension of gluing poset}
    Let $X$ be a finite poset with a unique maximal node. Suppose $Y$ is a height zero gluing of $X$ along $C$ with gluing map $g$. Let $x_0$ be a minimal node of $X$. Then, $\dim(g(x_0)^{\uparrow}_Y)=\text{max}\{\dim(x_X^{\uparrow})\mid g(x)=g(x_0)\}$
\end{lemma}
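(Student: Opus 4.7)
The plan is to verify both inequalities in the claimed equality, leveraging the fact that the gluing map $g$ identifies only elements inside $C \subseteq \min X$ and is injective on the complement of $C$.

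For the forward inequality $\dim(g(x_0)^{\uparrow}_Y) \geq \max\{\dim(x_X^{\uparrow}) \mid g(x) = g(x_0)\}$, I would fix any $x \in X$ with $g(x) = g(x_0)$ and choose a chain of maximum length $k = \dim(x_X^{\uparrow})$ in $x_X^{\uparrow}$. Since $x$ is the minimum of $x_X^{\uparrow}$, I may assume the chain has the form $x = y_0 < y_1 < \cdots < y_k$. Applying $g$ yields $g(x_0) = g(y_0) \leq g(y_1) \leq \cdots \leq g(y_k)$ in $g(x_0)^{\uparrow}_Y$. Each $y_i$ with $i \geq 1$ strictly exceeds the minimal element $y_0 = x$ in $X$, hence is not minimal and cannot lie in $C$. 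By condition (ii) in the definition of a gluing of $X$ along $C$, the map $g$ cannot identify any two of $g(y_0), g(y_1), \ldots, g(y_k)$, so this is a strict chain of length $k$ in $g(x_0)^{\uparrow}_Y$. Taking the maximum over $x$ completes this direction.

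For the reverse inequality, set $d = \dim(g(x_0)^{\uparrow}_Y)$ and select a chain in $g(x_0)^{\uparrow}_Y$ of length $d$. Since $g(x_0)$ is the minimum of $g(x_0)^{\uparrow}_Y$, I may assume this chain starts at $g(x_0)$, and since $Y$ is finite, I may refine it to a saturated chain $g(x_0) = u_0 <_c u_1 <_c \cdots <_c u_m$ in $Y$ with $m \geq d$. Now lift this chain to $X$ from the top down: choose any $w_m \in X$ with $g(w_m) = u_m$ by the surjectivity of $g$, and having constructed $w_i \in X$ with $g(w_i) = u_i$ for some $i \geq 1$, apply Lemma~\ref{gluing preserves dimension}(3) to the cover $u_{i-1} <_c u_i = g(w_i)$ to obtain $w_{i-1} \in X$ with $w_{i-1} <_c w_i$ and $g(w_{i-1}) = u_{i-1}$. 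The resulting saturated chain $w_0 <_c w_1 <_c \cdots <_c w_m$ satisfies $g(w_0) = u_0 = g(x_0)$ and lies in the up-set of $w_0$ in $X$, so $\dim(w_0^{\uparrow}) \geq m \geq d$, establishing the desired inequality.

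The step I expect to require the most care is the top-down lifting procedure in the second direction: Lemma~\ref{gluing preserves dimension}(3) only produces one new cover relation at a time, so the induction must proceed from the top in order to guarantee that each newly chosen $w_{i-1}$ covers the $w_i$ already in hand. The other subtle point is the distinctness check in the forward direction, which depends on the structural observation that in a strictly increasing chain starting at a minimum, only the bottom element can belong to $C \subseteq \min X$.
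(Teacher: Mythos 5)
Your proof is correct and follows essentially the same strategy as the paper: one inequality by pushing a maximal chain of $x_X^{\uparrow}$ forward and using property (ii) of gluings along $C\subseteq\min X$ to see the images remain distinct, and the other by lifting a saturated chain of $g(x_0)^{\uparrow}_Y$ back to $X$ via Lemma~\ref{gluing preserves dimension}(3). The only (harmless) difference is that you apply Lemma~\ref{gluing preserves dimension}(3) at every step of a top-down lift, whereas the paper lifts the positive-height portion of the chain directly and invokes that lemma only for the bottom cover $g(x_0)<_c g(x_1)$; your version is, if anything, slightly more systematic.
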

\begin{proof}
    Suppose $\dim(g(x_0)_Y^\uparrow)=m$ and $\text{max}\{\dim(x_X^{\uparrow})\mid g(x)=g(x_0)\}=n$. Since $g$ is surjective, there are elements $x_1, \ldots ,x_m$ of $X$ such that $g(x_0)<_cg(x_1)<_c\dots<_c g(x_m)=g(x_M)$ where $x_M$ is the maximal node of $X$. Since $g$ is a height-zero gluing, $x_i<_c x_{i+1}$ for all $i=1,\dots,m-1$. Thus, we focus on $g(x_0)<_cg(x_1)$. By Lemma $\ref{gluing preserves dimension}$, there exists $\mathbf{x} \in X$ such that $\mathbf{x}<_c x_1$ and $g(\mathbf{x})=g(x_0)$. So, $\mathbf{x}<_c x_1<_c\dots<_c x_M$ is a saturated chain of length $m$ in $X$. Hence, $\dim(\mathbf{x}_X^\uparrow) \geq m$ and so $n \geq m$. 

    Now, assume that $\mathbf{x}$ is a minimal node of $X$ such that $g(\mathbf{x})=g(x_0)$ and $\dim(\mathbf{x}_X^\uparrow)=n$. Let $\mathbf{x}<_c x_1<_c\dots<_c x_n= x_M$ be a saturated chain of length $n$ in $X$. We claim that its image in $Y$ under $g$ has length at least $n$. Since $g$ is a poset map, $g(\mathbf{x}) \leq g(x_1) \leq \dots \leq g(x_n) = g(x_M).$ If for some $i = 1,\dots, n-1$, $g(x_i)= g(x_{i+1})$, then by the properties of gluing maps, $x_i,x_{i+1} \in C \subseteq \min X$, which is a contradiction. By the same argument, $g(\mathbf{x}) = g(x_1)$ implies $x_1 \in \min X$, which is also a contradiction. Thus, $g(x_0) = g(\mathbf{x}) < g(x_1) < \dots < g(x_n) = g(x_M).$ Hence, $n \leq m$ and it follows that
\[
g(x_0)_Y^\uparrow= \max\{\dim(x_X^{\uparrow})\mid g(x)=g(x_0)\}.\] \end{proof}

We now turn our focus to the operation of retraction.

%\subsubsection{Retraction}
    \begin{definition}[Definition 5.14, \cite{Colbert}]\label{retraction definition}Let $X$ be a poset such that dim$(X) > 0$. We say a height one node $x$ is \emph{simple} if for all $u \in X$ such that $u <x$, $x$ is the only node that covers $u$. If $x$ is a simple node in $X,$ define $D_x:=\{u\in X:u <x\}$. We say $X'$ is a \emph{retraction} of $X$ if there exists a simple node $x\in X$ such that $X'=X \setminus D_x$ with the same order relations that are on $X$.
\end{definition}
\begin{remark}\label{posets are isomorphic}
    Notice that if $X'$ is a retraction of $X$, then for any $y \in X'$, the posets $y_X^\uparrow$ and $y_{X'}^\uparrow$ are naturally isomorphic.
\end{remark}
Suppose that $X'$ is a retraction of $X$. Since the construction of a dimension-preserving saturated embedding will require us to keep track of the dimension every time we perform a gluing or retraction operation on $X$, a natural question arises regarding how the dimensions of $X$ and $X'$ are related. We use the next lemma to show that retractions either preserve dimension or decrease it by exactly one.
\begin{lemma}\label{retraction and node naturally poset isomorphic}
    Let $X, X', x$ and $D_x$ be as in Definition $\ref{retraction definition}$. If $\dim(X)=\dim(u^{\uparrow}_{X})$ for some $u \in D_x$, then $\dim(X)=\dim(x_{X'}^{\uparrow})+1$.
\end{lemma}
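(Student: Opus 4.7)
The plan is to reduce the equality $\dim(X) = \dim(x_{X'}^{\uparrow}) + 1$ to a direct comparison of $\dim(u_{X}^{\uparrow})$ with $\dim(x_{X}^{\uparrow})$, and then transfer the latter to $X'$ via Remark \ref{posets are isomorphic}. The simplicity hypothesis on $x$ is what makes the whole argument go through.

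First, I would exploit the fact that $x$ is the unique node of $X$ that covers $u$. The key observation is that every $w \in X$ with $w > u$ satisfies $x \leq w$. Indeed, since $X$ is finite, the chain $u < w$ extends to a saturated chain $u = z_0 <_c z_1 <_c \cdots <_c z_\ell = w$, and then $z_1$ covers $u$, forcing $z_1 = x$ by simplicity. Consequently, as sets, $u_{X}^{\uparrow} = \{u\} \sqcup x_{X}^{\uparrow}$, with $u$ lying strictly below every element of $x_{X}^{\uparrow}$.

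Next, I would combine this structural description with the definition of dimension to show $\dim(u_{X}^{\uparrow}) = 1 + \dim(x_{X}^{\uparrow})$. For the lower bound, prepending $u$ to any chain in $x_{X}^{\uparrow}$ produces a chain of length one greater in $u_{X}^{\uparrow}$. For the upper bound, any chain $u = w_0 < w_1 < \cdots < w_m$ in $u_{X}^{\uparrow}$ has $x \leq w_1$ by the previous paragraph, so either $w_1 = x$ (yielding a chain $x = w_1 < \cdots < w_m$ of length $m-1$ in $x_{X}^{\uparrow}$) or $x < w_1$ (yielding a chain $x < w_1 < \cdots < w_m$ of length $m$); in either case $m \leq 1 + \dim(x_{X}^{\uparrow})$.

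Finally, I would invoke Remark \ref{posets are isomorphic}: since $x \in X'$ and removing $D_x$ strips away only elements strictly below $x$, none of which lie in $x_{X}^{\uparrow}$, the up sets $x_{X}^{\uparrow}$ and $x_{X'}^{\uparrow}$ are naturally isomorphic, hence equidimensional. Combining this with the hypothesis $\dim(X) = \dim(u_{X}^{\uparrow})$ and the equality from the previous paragraph gives $\dim(X) = 1 + \dim(x_{X}^{\uparrow}) = \dim(x_{X'}^{\uparrow}) + 1$. I do not expect a serious obstacle: the only nontrivial step is the cover-chain argument upgrading $w > u$ to $w \geq x$, which hinges entirely on the simplicity of $x$ and the finiteness of $X$.
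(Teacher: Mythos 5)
Your proposal is correct and follows essentially the same route as the paper: reduce to showing $\dim(u_X^{\uparrow}) = \dim(x_X^{\uparrow}) + 1$ via the simplicity of $x$, then transfer to $X'$ by Remark \ref{posets are isomorphic}. The paper states the key equality in one line, whereas you justify it by refining $u < w$ to a saturated chain to get $u_X^{\uparrow} = \{u\} \sqcup x_X^{\uparrow}$; that is exactly the detail the paper leaves implicit.
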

\begin{proof}
  By Remark $\ref{posets are isomorphic}$, $x \in X'$ immediately implies $\dim(x_{X'}^{\uparrow}) = \dim(x_X^{\uparrow})$. Let $u \in D_x$ be a minimal element of $X$ for which $\dim(X) = \dim(u^{\uparrow}_{X})$. Then, because $x$ is the \emph{only} element of $X$ that covers $u$, we have $
\dim(X) = \dim(u^{\uparrow}_{X}) = \dim(x_X^{\uparrow}) + 1 = \dim(x_{X'}^{\uparrow}) + 1.$
\end{proof}

We next define a splitting operation on finite posets, which can informally be thought of as the opposite of the gluing operation. We conclude this section by stating a striking theorem about posets proved in \cite{Colbert}: every finite poset with a unique maximal node can be reduced to a single point through a sequence of splittings and retractions.
 
\begin{definition}[Definition 5.11, \cite{Colbert}] If $Y$ is a finite poset, let $n(Y)$ denote the number of height zero nodes of $Y$ that have at least two covers.
\end{definition}
\begin{definition}[Definition 5.12, \cite{Colbert}]
    Let $Y$ be a finite poset. We say $X$ is a \emph{height zero splitting} of $Y$ if the following statements hold:
    \begin{enumerate}[(i)]
        \item $Y$ is a height zero gluing of $X,$ and
        \item either $n(X)=0$ or $n(X)<n(Y)$.
    \end{enumerate}
\end{definition}
\begin{definition}[Definition 5.18, \cite{Colbert}]
    A sequence $(X_1,\dots , X_n)$ of posets is called a \emph{reduction sequence} if for all $1 \leq i < n$, $X_i$ is a height zero splitting of $X_{i+1}$, a retraction of $X_{i+1}$, or equal to $X_{i+1}$. If $X$ and $Y$ are posets, we say $Y$ \emph{reduces to} $X$ if there exists a reduction sequence of the form $(X_1 = X,\dots, X_n = Y )$.
\end{definition}
\begin{theorem}[Theorem 5.19, \cite{Colbert}]\label{every finite poset reduced to a point}
    Every finite poset with a unique maximal node reduces to a point.
\end{theorem}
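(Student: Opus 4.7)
The plan is to induct on the number of cover pairs of $Y$, which I denote $e(Y)$ (the number of edges in the Hasse diagram). In the base case $e(Y) = 0$, $Y$ is an antichain, and the unique-maximum hypothesis then forces $|Y| = 1$, so $Y$ is already a point and the singleton sequence $(Y)$ works. In the inductive step, it suffices to exhibit a poset $Z$ with a unique maximum, $e(Z) < e(Y)$, and a reduction sequence from $Z$ to $Y$; splicing this sequence with the reduction of $Z$ to a point given by the inductive hypothesis then produces the desired reduction of $Y$ to a point.

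If $Y$ already has a simple node $x$, I take $Z := Y \setminus D_x$, so that $(Z, Y)$ is a length-two reduction sequence consisting of a single retraction. Simplicity of $x$ says each $u \in D_x$ has $x$ as its unique cover, so removing $D_x$ deletes precisely $|D_x| \geq 1$ cover pairs and preserves every other one, giving $e(Z) < e(Y)$. The poset $Z$ still has a unique maximum: if $x$ was itself the unique maximum then $\dim(Y) = 1$ and $Z = \{x\}$ is already a point; otherwise the original unique maximum is non-minimal, hence lies in $Z$, and remains the unique maximum because all removed nodes were minimal.

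If $Y$ has no simple node, then $e(Y) \geq 1$ forces $\dim(Y) \geq 1$, so there exists a height-one node, whose failure to be simple forces $n(Y) \geq 1$. I then perform a sequence of backward splittings one node at a time: for a minimal $u$ with covers $v_1, \ldots, v_k$ ($k \geq 2$), define $Y'$ by replacing $u$ with fresh minimals $u_1, \ldots, u_k$ satisfying $u_i <_c v_i$ and $u_i \leq w$ iff $w \geq v_i$. One checks directly that $Y$ is the gluing of $Y'$ along $\{u_1, \ldots, u_k\} \subseteq \min Y'$ (a complete subset, being a set of minimal elements), that $n$ drops by exactly one while $e$ and the unique maximum are unchanged; in particular $Y'$ is a height-zero splitting of $Y$. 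Iterating at most $n(Y)$ times yields $Y^* = Y_k$ with $n(Y^*) = 0$. Since any $u < x$ with $x$ of height one is forced to be minimal and covered by $x$, the condition $n(Y^*) = 0$ makes every height-one node of $Y^*$ simple. Choosing such an $x^*$ and setting $Z := Y^* \setminus D_{x^*}$, the sequence $(Z, Y^*, Y_{k-1}, \ldots, Y_1, Y)$ is a reduction of $Y$ to $Z$: the first step is a retraction and the remaining $k$ steps are splittings. Because splittings preserve $e$, the final retraction gives $e(Z) = e(Y) - |D_{x^*}| < e(Y)$, and the same analysis as in Case 1 shows $Z$ has a unique maximum.

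The principal technical obstacle is verifying that the elementary ``split one minimal node'' operation truly is a height-zero splitting in the sense of the definitions in the excerpt: one must confirm that $Y$ is the universal gluing of $Y'$ along $\{u_1, \ldots, u_k\}$, which can be checked directly or deduced from Theorem~\ref{theorem about order induced by gluings}, and that $n$ strictly drops at each stage. Once this bookkeeping is settled, the invariant $e$ strictly decreases across each splitting-plus-retraction block, driving the induction to the base case.
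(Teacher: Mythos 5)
Your argument is correct: alternating height-zero splittings (splitting one multi-cover minimal node at a time to drive $n(Y)$ to zero, at which point every height-one node is simple) with a single retraction, all governed by the strictly decreasing number of cover pairs in the Hasse diagram, is essentially the strategy of the proof in \cite{Colbert} that this paper cites without reproducing. The only assertion you leave implicit is that a finite poset of positive dimension contains a node of height exactly one (needed both to locate a non-simple height-one node in $Y$ and a simple one in $Y^*$); this is true --- take a node of least positive height $h$ and observe that the second node of a saturated chain of length $h$ witnessing that height has height exactly one, forcing $h=1$ --- so there is no gap.
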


\section{Gluing and Retraction in Rings}\label{Gluing and Retraction in Rings}

%We begin by discussing the ring-theoretic analogues of gluing and splitting. In the latter half of the section, we present key results from \cite{Colbert} along with remarks on the main ideas behind their proofs.
%\subsubsection{Growing and Gluing Theorems}
In this section, we discuss the ring-theoretic analogues of the gluing operation and the retraction operation.
The majority of this section is dedicated to a detailed exploration of Theorem~\(\ref{gluing theorem}\), \emph{The Gluing Theorem} from \cite{gluingpaper}. Specifically, given a local ring \( S \) satisfying certain conditions, Theorem~\(\ref{gluing theorem}\) shows the existence of a local subring $B$ of $S$ where, as partially ordered sets, the spectrum of $B$ is the same as the spectrum of $S$ except that in $B$ some of the minimal prime ideals of $S$ are ``glued together." In Lemma~\(\ref{ass and subrings, good remain good}\), we study the relationship between the associated prime ideals of \( B/bB \) and those of \( S/bS \) where $b$ is a regular element of $B$, which will be useful when we construct our final local unique factorization domain. In Lemma~\(\ref{lifting chaing from B to S}\), we demonstrate a close connection between the rings $S$ and $B$ by comparing the coheights of their minimal prime ideals. We then state \emph{The Growing Theorem} from \cite{Colbert} (Theorem $\ref{the growing theorem}$), which can be thought of as the ring version of the reverse of the retraction operation. Finally, we present two key results from \cite{Colbert} regarding retractions and height zero gluings, including remarks highlighting the central ideas of their proofs. These lemmas are essential in the proof in \cite{Colbert} that any finite poset is isomorphic to a saturated subset of the prime spectrum of a quasi-excellent domain (Theorem~$\ref{theorem 6.5 in colbert}$). In Section~$\ref{quasi-excellent ring construction}$, we generalize these lemmas to establish a stronger form of this theorem.

\begin{theorem}[Theorem 2.14, \cite{gluingpaper}]\label{gluing theorem}(The Gluing Theorem). Let $(S,M)$ be a reduced local ring containing the rationals with $S/M$ uncountable and $|S| = |S/M|.$ Suppose $\Min(S)$ is partitioned into $m \geq 1$ subcollections $C_1, \dots , C_m$. Then there is a reduced local ring $B \subseteq S$ with maximal ideal $B \cap M$ such that:
    \begin{enumerate}
       \item $B$ contains the rationals,
        \item $\widehat{B} = \widehat{S}$,
        \item $B/(B \cap M )$ is uncountable and $|B| = |B/(B \cap M )|,$
        \item For $Q,Q' \in \Min(S)$, $Q\cap B=Q'\cap B$ if and only if there is an $i\in\{1,\dots,m\}$ with $Q\in C_i$ and $Q' \in C_i$,
        \item\label{property about f map} The map $f : \spec(S) \longrightarrow \spec(B)$ given by $f(P) = B \cap P$ is onto and, if $P$ is a prime ideal of $S$ with positive height, then $f(P)S = P$. In particular, if $P$ and $P'$ are prime ideals of $S$ with positive height, then $f(P)$ has positive height and $f(P) = f(P')$ implies that $P = P'$.
   \end{enumerate}
\end{theorem}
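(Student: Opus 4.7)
The plan is to construct $B$ as the nested union of a transfinite chain of quasi-local subrings $(R_\alpha)_{\alpha < \kappa}$ of $S$, where $\kappa = |S/M|$. The construction is driven by enumerating, in a single well-ordering of length $\kappa$, three types of ``tasks'' to be discharged at successor stages:
\begin{enumerate}
\item[(T1)] for each $s\in S$ and each $n \geq 1$, arrange that some element of $R_\alpha$ is congruent to $s$ modulo $M^n$, so that in the limit $\widehat{B}=\widehat{S}$;
\item[(T2)] for each prime $P$ of $S$ of positive height, arrange that a finite generating set for $P$ lies in $R_\alpha$, so that in the limit $(P\cap B)S=P$;
\item[(T3)] for each coset of $M$ in $S$, arrange that a representative lies in $R_\alpha\setminus(R_\alpha\cap M)$, so that $B/(B\cap M) = S/M$.
\end{enumerate}

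I would maintain the following invariants on each $R_\alpha$: $(R_\alpha, R_\alpha\cap M)$ is reduced and quasi-local of cardinality less than $\kappa$; $\mathbb{Q}\subseteq R_\alpha$; for any $Q,Q'\in\Min(S)$ belonging to the same class $C_i$ one has $Q\cap R_\alpha = Q'\cap R_\alpha$; and for $Q\in C_i$, $Q'\in C_j$ with $i\neq j$ there is already a witness $r\in R_\alpha$ with $r\in Q$ but $r\notin Q'$. The base ring $R_0$ is built by localizing $\mathbb{Q}$ at $\mathbb{Q}\cap M$ and then adjoining, for each ordered pair $(i,j)$ with $i\neq j$, an element of $\bigcap_{Q\in C_i}Q$ that lies outside some chosen $Q'\in C_j$ (such elements exist by prime avoidance since $\bigcap_{Q\in C_i}Q\not\subseteq Q'$ when $Q'\notin C_i$). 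At a limit ordinal $\lambda$ we set $R_\lambda = \bigcup_{\alpha<\lambda} R_\alpha$; since all invariants are of finite character they pass to the union.

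At a successor stage, when the scheduled task requires us to adjoin a particular element $s\in S$ (for (T1) or (T2)) or a coset representative (for (T3)), the gluing invariant could be destroyed. The fix is a standard ``correction'' trick: one shows that the set of elements $t$ of $S$ such that $s + t$ fails to lie simultaneously in the desired ideal-theoretic position (inside the appropriate intersections $\bigcap_{Q\in C_i}Q$, or outside every minimal prime, as dictated by $s$) has cardinality strictly less than $|S|=\kappa$ and is contained in a proper union of cosets of certain ideals; hence a suitable $t\in R_\alpha$-transcendental $S$-element exists, and we adjoin $s+t$ (and its inverses when necessary to preserve quasi-localness) to form $R_{\alpha+1}$. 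Reducedness is preserved because $S$ itself is reduced and $R_{\alpha+1}\subseteq S$. After $\kappa$ stages, setting $B=\bigcup_{\alpha<\kappa}R_\alpha$, properties (1) and (3) are immediate from the base ring and task (T3); property (2) follows from (T1) because the induced map $\widehat{B}\longrightarrow\widehat{S}$ is surjective and $B$ and $S$ have the same residue field and same completion modulo reducedness; property (4) follows from the invariant plus the witnesses arranged in $R_0$; and property (5) follows from (T2), since $(P\cap B)S = P$ directly gives surjectivity of $f$ on positive-height primes and implies $f(P)=f(P')$ forces $P = P'$, while surjectivity on the minimal level follows from (4) together with the fact that each minimal prime of $B$ pulls back to a prime of $S$ contained in a minimal one.

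The main obstacle is the correction step at successor stages: showing that for a reduced local subring $R_\alpha$ with $|R_\alpha|<\kappa$, any target element $s\in S$ can be replaced by $s+t$ with $t\in S$ small in an appropriate sense, so that $s+t$ respects the partition-based gluing invariant while simultaneously keeping $R_{\alpha+1}$ reduced, quasi-local with maximal ideal $R_{\alpha+1}\cap M$, and of cardinality below $\kappa$. The uncountability of $S/M$ and the equality $|S|=|S/M|$ are exactly what allow prime avoidance to succeed against the less-than-$\kappa$-many ``bad'' conditions at each step; verifying that these avoidance estimates are uniform across all the simultaneous demands (one for each pair of partition classes, one for each height-zero prime, and one for each $M$-adic coset) is the delicate combinatorial heart of the argument.
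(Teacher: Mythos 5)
First, a point of reference: the paper does not prove this statement at all --- it is quoted verbatim from \cite{gluingpaper} --- so your attempt must be measured against the proof given there. Your overall strategy (a transfinite chain of small quasi-local subrings of $S$, tasks discharged at successor stages, and Heitmann-style coset/prime avoidance to correct each adjoined element so that it lies in all or in none of the minimal primes of each partition class) is indeed the strategy of that proof.

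There is, however, one genuine gap, in your treatment of property (2). You claim $\widehat{B}=\widehat{S}$ follows from task (T1) alone, ``because the induced map $\widehat{B}\longrightarrow\widehat{S}$ is surjective and $B$ and $S$ have the same residue field and same completion modulo reducedness.'' This does not work: $M$-adic density of $B$ in $S$ gives surjectivity on completions but says nothing about injectivity, and --- more fundamentally --- it does not guarantee that the union $B$ is Noetherian, which is part of what ``$\widehat{B}=\widehat{S}$'' asserts for a local ring. Every construction of this type requires a second, independent task in the recursion: for each finite tuple $a_1,\dots,a_n$ in the subring and each $c$ of the subring lying in $(a_1,\dots,a_n)S$, adjoin coefficients witnessing $c\in(a_1,\dots,a_n)R_{\alpha+1}$, so that in the limit $IS\cap B=I$ for every finitely generated ideal $I$ of $B$. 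Together with surjectivity onto $S/M^2$, this is exactly the hypothesis of Proposition \ref{completion proving machine}, which is how one actually concludes that $B$ is Noetherian with $\widehat{B}=\widehat{S}$; and the shared completion is in turn what gives the faithful flatness of $B\longrightarrow S$ underlying the surjectivity of $f$ in (5). This closing-up task is not free, since the witnessing coefficients must themselves be corrected to respect the gluing invariant; showing that this is always possible is one of the substantive lemmas of \cite{gluingpaper}, and it is absent from your outline.

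Two smaller imprecisions, both fixable: the witness element for a class $C_i$ placed in $R_0$ must lie outside \emph{every} minimal prime not in $C_i$ (not merely outside ``some chosen $Q'\in C_j$''), otherwise it breaks the invariant for the class containing the primes it accidentally meets; and the gluing invariant must be verified for all of $R_{\alpha}[x]$, not just for the adjoined element $x$, which is why one arranges that, modulo each minimal prime $Q$, the image of $x$ is either zero or transcendental over the image of $R_\alpha$, with the zero/transcendental pattern constant on each partition class.
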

\begin{remark}\label{nicecorrespondence}
    Let $B$ and $S$ be as in Theorem $\ref{gluing theorem}$. Condition $\ref{property about f map}$ establishes that $f$ is an inclusion preserving bijection from the positive height prime ideals of $B$ to the positive height prime ideals of $S$. Thus, there is a one-to-one inclusion preserving correspondence between the prime ideals of $S$ of positive height and the prime ideals of $B$ of positive height.
\end{remark}

Suppose $S$ and $B$ are as in Theorem \ref{gluing theorem}, and let $b$ be a regular element of $B$. In Lemma \ref{ass and subrings, good remain good}, we show that if $P$ is a prime ideal of $S$ satisfying $P \cap B \in \ass_B(B/bB)$, then $P \in \ass_S(S/bS)$. To do this, we first recall a preliminary result.
%The following lemmas describe how the associated primes of two distinct $R$-modules are related. This connection plays a key role in the proof of Lemma~$\ref{ass and subrings, good remain good}$.
%\begin{prop}\label{ass of subrings}
%    Let $R$ be a ring and $M, M'$ $R-$modules with $M'\hookrightarrow M$. Then
%    $$\ass_R(M') \subset \ass_R(M)$$ 
%\end{prop}
%\begin{proof}
%    Let $P \in \ass_R(M')$. Then $P= \text{ann}(m')$ for some $m' \in M'$. Since $M'\hookrightarrow M$, $P$ is the annahilator of the image of $m'$ in $M$. 
%\end{proof}
\begin{lemma}[Exercise 6.7, \cite{Matsumura}]\label{changing base rings for ass}
    Let $\phi:R \longrightarrow S$ be a homomorphism of Noetherian rings. Let $M$ be a finite $S-$module, and $\phi_*: \spec(S) \longrightarrow \spec(R)$ be given by $\phi_*(P)=\phi^{-1}(P)$. Then $\phi_*(\ass_S(M))=\ass_R(M)$.
\end{lemma}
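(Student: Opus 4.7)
The plan is to establish the two inclusions $\phi_*(\ass_S(M)) \subseteq \ass_R(M)$ and $\ass_R(M) \subseteq \phi_*(\ass_S(M))$ separately, with essentially all of the difficulty lying in the second.

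For the forward inclusion, I would take $P \in \ass_S(M)$ and write $P = \ann_S(m)$ for some $m \in M$. Unpacking definitions gives
\[
\phi^{-1}(P) = \{r \in R : \phi(r)m = 0\} = \{r \in R : rm = 0\} = \ann_R(m),
\]
where $rm$ refers to the $R$-module structure on $M$ induced by $\phi$. Since the preimage of a prime ideal under a ring homomorphism is prime, $\ann_R(m) = \phi^{-1}(P)$ is a prime ideal of $R$ realized as the annihilator of an element of $M$, so it lies in $\ass_R(M)$.

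For the reverse inclusion, I would pick $\mathfrak{p} \in \ass_R(M)$ with $\mathfrak{p} = \ann_R(m)$ and pass to the cyclic $S$-submodule $N := Sm \subseteq M$, which is a finite $S$-module. Since $m \in N$, we have $\mathfrak{p} \in \ass_R(N)$, and clearly $\ass_S(N) \subseteq \ass_S(M)$, so it suffices to find some $P \in \ass_S(N)$ with $\phi^{-1}(P) = \mathfrak{p}$. The plan is to invoke primary decomposition of the zero submodule of $N$ as an $S$-module: write $0 = \bigcap_{i=1}^n Q_i$ where $Q_i$ is $P_i$-primary and $\{P_1,\dots,P_n\} = \ass_S(N)$. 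This produces an $S$-module (hence $R$-module) injection $N \hookrightarrow \bigoplus_{i=1}^n N/Q_i$, which by a standard property of associated primes yields $\ass_R(N) \subseteq \bigcup_i \ass_R(N/Q_i)$.

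The crux is then the claim that $\ass_R(N/Q_i) = \{\phi^{-1}(P_i)\}$ for every $i$, and this is the step I expect to be the main obstacle. Because $N/Q_i$ is $P_i$-primary over $S$, every nonzero $\bar n \in N/Q_i$ satisfies $\sqrt{\ann_S(\bar n)} = P_i$. Using $\ann_R(\bar n) = \phi^{-1}(\ann_S(\bar n))$ together with the fact that $\phi^{-1}$ commutes with taking radicals, I get $\sqrt{\ann_R(\bar n)} = \phi^{-1}(P_i)$. Any $\mathfrak{q} \in \ass_R(N/Q_i)$ is of the form $\ann_R(\bar n)$ and is prime, so $\mathfrak{q} = \sqrt{\mathfrak{q}} = \phi^{-1}(P_i)$; nonemptiness of $\ass_R(N/Q_i)$ (since $R$ is Noetherian and $N/Q_i \neq 0$) closes the claim. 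Combining the containments gives $\ass_R(N) \subseteq \{\phi^{-1}(P_i) : 1 \leq i \leq n\}$, and since $\mathfrak{p} \in \ass_R(N)$ I conclude that $\mathfrak{p} = \phi^{-1}(P_i) \in \phi_*(\ass_S(M))$ for some $i$, completing the reverse inclusion.
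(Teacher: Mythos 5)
The paper does not prove this lemma at all --- it is quoted verbatim as Exercise 6.7 of Matsumura with no argument supplied --- so there is no in-paper proof to compare against. Your proof is correct and is the standard solution to that exercise. The forward inclusion is exactly the formal computation $\phi^{-1}(\ann_S(m)) = \ann_R(m)$ plus the fact that contractions of primes are prime (note this direction needs none of the Noetherian or finiteness hypotheses). For the reverse inclusion, the reduction to the cyclic submodule $N = Sm$ and then to the coprimary quotients $N/Q_i$ of a primary decomposition is the right move; a filtration with quotients $S/P_i$ would not work here, since the filtration primes can strictly contain $\ass_S(N)$. The two points worth being careful about are both handled correctly: the inclusion $\ass_R(\bigoplus_i N/Q_i) \subseteq \bigcup_i \ass_R(N/Q_i)$ is valid even though the $N/Q_i$ need not be finite $R$-modules (the argument via $\ass(M) \subseteq \ass(M') \cup \ass(M/M')$ uses only the definition of associated prime), and the identity $\sqrt{\ann_S(\bar n)} = P_i$ for nonzero $\bar n \in N/Q_i$ is the correct characterization of a coprimary module, which combined with $\phi^{-1}(\sqrt{I}) = \sqrt{\phi^{-1}(I)}$ pins down every prime in $\ass_R(N/Q_i)$ as $\phi^{-1}(P_i)$. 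As you observe, even the nonemptiness step is dispensable, since only the containment $\ass_R(N/Q_i) \subseteq \{\phi^{-1}(P_i)\}$ is needed to conclude.
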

\begin{lemma}\label{ass and subrings, good remain good}
    Let $(S,M)$ and $(B,B \cap M)$ be as in Theorem $\ref{gluing theorem}$ and let $b$ be a regular element of $B$. If $P \in \spec(S)$ with $P \cap B \in \ass_B(B/bB)$, then $P \in \ass_S(S/bS)$.
\end{lemma}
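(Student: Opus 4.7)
The plan is to transfer the associated-prime information up to the common completion $\widehat{B}=\widehat{S}$ of $B$ and $S$, and then back down to $S$, finally identifying the resulting prime of $S$ with $P$ via the bijection between positive-height primes supplied by Theorem \ref{gluing theorem}. Setting $\mathfrak{p}:=P\cap B$, I would first note that since $b$ is a regular element of $B$ and $\mathfrak{p}\in\ass_B(B/bB)$, the prime $\mathfrak{p}$ contains the regular element $b$, and so $\height(\mathfrak{p})\ge 1$.

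First I would use the completion map $B\hookrightarrow\widehat{B}$. By faithful flatness, $b\widehat{B}\cap B=bB$, so the annihilator in $B$ of the image of any $x\in B$ in $\widehat{B}/b\widehat{B}$ agrees with its annihilator in $B/bB$; this gives $\mathfrak{p}\in\ass_B(\widehat{B}/b\widehat{B})$. Applying Lemma \ref{changing base rings for ass} to the inclusion $B\hookrightarrow\widehat{B}$ and the finitely generated $\widehat{B}$-module $\widehat{B}/b\widehat{B}$, I would then lift $\mathfrak{p}$ to a prime $\widehat{Q}\in\ass_{\widehat{B}}(\widehat{B}/b\widehat{B})$ with $\widehat{Q}\cap B=\mathfrak{p}$. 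Because $\widehat{B}=\widehat{S}$ by Theorem \ref{gluing theorem}(2), this same $\widehat{Q}$ lies in $\ass_{\widehat{S}}(\widehat{S}/b\widehat{S})$.

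Next I would descend back to $S$. Setting $Q':=\widehat{Q}\cap S$ and applying Lemma \ref{changing base rings for ass} to $S\hookrightarrow\widehat{S}$ with $M=\widehat{S}/b\widehat{S}$, I get $Q'\in\ass_S(\widehat{S}/b\widehat{S})$; a routine check via faithful flatness (using that $\ass_S(\widehat{S}/\mathfrak{q}\widehat{S})=\{\mathfrak{q}\}$ for each prime $\mathfrak{q}$ of $S$) shows $\ass_S(\widehat{S}/b\widehat{S})=\ass_S(S/bS)$, so $Q'\in\ass_S(S/bS)$. To finish, I would observe that $Q'\cap B=\widehat{Q}\cap B=\mathfrak{p}=P\cap B$ has positive height, so neither $Q'$ nor $P$ is a minimal prime of $S$ (since the contraction of a minimal prime of $S$ is a minimal prime of $B$). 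Remark \ref{nicecorrespondence} then forces $Q'=P$, yielding $P\in\ass_S(S/bS)$.

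The hard part will be the associated-prime bookkeeping across the two faithfully flat completion maps — in particular, verifying that $\ass_S(\widehat{S}/b\widehat{S})=\ass_S(S/bS)$ and phrasing the lift-then-descend step so that Lemma \ref{changing base rings for ass} applies in the correct direction at each stage. Once that bookkeeping is in hand, the positive-height observation for $\mathfrak{p}$ is precisely what allows the Gluing Theorem's bijection on positive-height primes to close the argument.
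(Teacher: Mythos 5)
Your argument is correct, and the endgame coincides with the paper's: both proofs produce some $\mathfrak{p}' \in \ass_S(S/bS)$ with $\mathfrak{p}' \cap B = P \cap B$, observe that both $\mathfrak{p}'$ and $P$ have positive height, and invoke condition (5) of Theorem \ref{gluing theorem} to force $\mathfrak{p}' = P$. Where you differ is in the middle. The paper never passes through the completion as a module: it shows $bS \cap B = bB$ (using $\widehat{B}=\widehat{S}$), deduces that $B/bB \hookrightarrow S/bS$ is injective, hence $\ass_B(B/bB) \subseteq \ass_B(S/bS)$, and then applies Lemma \ref{changing base rings for ass} once, to the inclusion $B \hookrightarrow S$ with the finite $S$-module $S/bS$. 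You instead go up to $\widehat{B}=\widehat{S}$ and back down, which costs you two applications of Lemma \ref{changing base rings for ass} plus the extra verification that $\ass_S(\widehat{S}/b\widehat{S})=\ass_S(S/bS)$; that last identity is genuinely more than Lemma \ref{changing base rings for ass} alone gives (it needs the flat base change description of associated primes, e.g.\ Matsumura Theorem 23.2, together with your observation that each $\ass_{\widehat{S}}(\widehat{S}/\mathfrak{q}\widehat{S})$ contracts to $\{\mathfrak{q}\}$ because $\widehat{S}/\mathfrak{q}\widehat{S}$ is torsion-free over the domain $S/\mathfrak{q}$). So your route is sound but longer; the paper's choice of $S/bS$ as the intermediate $B$-module lets the common completion enter only through the one identity $bS\cap B=bB$. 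One small economy available to you at the end: you do not need Lemma \ref{lemma about existance of minimal prime in S}(1) to rule out minimality of $Q'$ and $P$ --- since $b$ is regular in $S$ (flatness of $B\to S$) and lies in both $Q'$ and $P$ (as $b\in\mathfrak{p}\subseteq P$ and $Q'\in\ass_S(S/bS)$), neither can be a minimal prime of the reduced ring $S$; this is essentially how the paper phrases it.
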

\begin{proof}
    Let $f : \spec(S) \longrightarrow \spec(B)$ be given by $f(P) = B \cap P$. 
%    We claim that $B/bB \hookrightarrow S/bS$. Consider the canonical injection and projection $i: B \longrightarrow S$ and $\pi: S \longrightarrow S/bS$.
%    Let $\varphi:=\pi \circ i: B \longrightarrow S/bS$ and note that $ker(\varphi)=bS\cap B$. 
    Since $\widehat{S}=\widehat{B}=T$, we have $bT \cap S=bS$ and $bT\cap B=bB$. Thus, $bS\cap B=bT \cap S \cap B=bT\cap B=bB$, and so the kernel of the map $B \longrightarrow S/bS$ is $bB$, and it follows that $B/bB \longrightarrow S/bS$ is an injection.
  Therefore,
    $$\ass_B(B/bB) \subseteq \ass_B(S/bS).$$

Now suppose that $P \in \Spec(S)$ with $P \cap B \in \ass_B(B/bB)$. Then $P \cap B \in \ass_B(S/bS)$. By Lemma $\ref{changing base rings for ass}$, $f(\ass_S(S/bS))=\ass_B(S/bS)$, and so 
%In particular, for any $\mathfrak{q} \in \mathrm{Spec}(B)$, 
%$\mathfrak{q} \in \mathrm{Ass}_B(S/bS)$ holds if and only if
there exists $\mathfrak{p} \in \mathrm{Spec}(S)$ with $\mathfrak{p} \in \mathrm{Ass}_S(S/bS)$ such that 
%$\mathfrak{q} = \mathfrak{p} \cap B$ and $\mathfrak{p} \in \mathrm{Ass}_S(S/bS)$. Applying this to $P \cap B \in \mathrm{Ass}_B(S/bS)$ guarantees the existence of $\mathfrak{p} \in \mathrm{Spec}(S)$ with
$\mathfrak{p} \cap B = P \cap B$.  
Since $\widehat{B} = \widehat{S}$, the extension $B \longrightarrow S$ is flat, and so $b \in B$ is a regular element of $S$. Thus, $\mathfrak{p}$ and $P$ are not minimal prime ideals of $S$. Since $f(P) = f(\mathfrak{p}),$ condition \ref{property about f map} of Theorem \ref{gluing theorem} gives us $P = \mathfrak{p}$ and so $P \in \mathrm{Ass}_S(S/bS)$.
%Finally, since Theorem $\ref{gluing theorem}$ establishes an inclusion-preserving bijection between $\mathrm{Spec}(B) \setminus \{(0)\}$ and $\mathrm{Spec}(S) \setminus \mathrm{Min}(S)$, $P \cap B= \mathfrak{p} \cap B$ implies that $P = \mathfrak{p}$. Thus, $P \in \mathrm{Ass}_S(S/bS)$.
\end{proof}
Lemma~$\ref{ass and subrings, good remain good}$ holds because of the unique relationship between the rings $S$ and $B$. Generally speaking, the lemma does not hold for a ring $R$ and a subring $R'$ of $R$. To illustrate this point, consider the following simple example: set $R=\mathbb{Q}[[x,y]]$, $R'=\mathbb{Q}[[x]]$, $b=x$, and $P=(y)$. Notice that $P \cap R' = (0) \in \operatorname{Ass}_{R'}(R'/xR')$ and $P \notin \operatorname{Ass}_R(R/xR)$.

We now work to establish a relationship between the coheights of the minimal prime ideals of $S$ and the coheights of the minimal prime ideals of $B$ where $S$ and $B$ are as in Theorem \ref{gluing theorem}. We first state a useful result from \cite{Colbert}. Note that the hypotheses of Lemma \ref{lemma about existance of minimal prime in S} hold for the rings $S$ and $B$ from Theorem \ref{gluing theorem}.

\begin{lemma}[Lemma 6.3, \cite{Colbert}]\label{lemma about existance of minimal prime in S}
    Let $(S,M)$ and $(B,B\cap M)$ be two reduced local rings such that $B\subseteq S$ and $\widehat{B}=\widehat{S}.$ Suppose that $f : \spec(S) \longrightarrow \spec(B)$ given by $f(P) := P \cap B$ is surjective and satisfies $f(P)S = P$ for all prime ideals $P$ of $S$ whose height is positive. Then we have the following:
    \begin{enumerate}
        \item If $Q$ is a minimal prime ideal of $S$, then $Q\cap B$ is a minimal prime ideal of $B$.
        \item\label{condition about minimals in lemma 6.3}If $P_1 \cap B = P_2 \cap B$ for prime ideals $P_1, P_2$ of $S$, then either $P_1 = P_2$ or both $P_1$ and $P_2$ are minimal prime ideals of $S$.
        \item If $P_1 \cap B \supseteq P_2 \cap B$ for prime ideals $P_1, P_2$ of $S$, then there exists a prime ideal $Q$ of $S$ such that $P_1 \supseteq Q$ and $Q \cap B = P_2 \cap B$.
    \end{enumerate}
\end{lemma}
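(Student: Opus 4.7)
The plan is to first establish that the inclusion $B \hookrightarrow S$ is flat and then exploit two standard consequences of flatness: the height inequality $\height(Q \cap B) \leq \height(Q)$ for a prime $Q$ of $S$, and the going-down theorem. To see flatness, factor the completion map $B \longrightarrow \widehat{B}$ as $B \longrightarrow S \longrightarrow \widehat{S}$ and use $\widehat{B} = \widehat{S}$. The composition $B \longrightarrow \widehat{B}$ is flat because $B$ is Noetherian local, and $S \longrightarrow \widehat{S}$ is faithfully flat, so a short faithfully flat descent argument forces $B \hookrightarrow S$ itself to be flat. Part (1) is then immediate: if $Q$ is a minimal prime of $S$ then $\height(Q) = 0$, and the height inequality forces $Q \cap B$ to be a minimal prime of $B$.

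For part (2), suppose $P_1 \cap B = P_2 \cap B$ with $P_1 \neq P_2$, and assume toward a contradiction that $P_1$ has positive height, so that $P_1 = f(P_1)S = f(P_2)S$. If $P_2$ also has positive height, then $P_2 = f(P_2)S = P_1$, a contradiction. If instead $P_2$ is a minimal prime of $S$, then $P_1 = f(P_2)S \subseteq P_2$ since in general $f(P_2) = P_2 \cap B \subseteq P_2$; minimality of $P_2$ then forces $P_1 = P_2$, again a contradiction. Hence $P_1$ must be minimal, and by symmetry so must $P_2$.

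For part (3), observe that $P_2 \cap B \subseteq P_1 \cap B$ in $\Spec(B)$ and that $P_1 \in \Spec(S)$ lies over $P_1 \cap B$. Applying going-down for the flat extension $B \hookrightarrow S$ to this chain produces a prime $Q$ of $S$ with $Q \subseteq P_1$ and $Q \cap B = P_2 \cap B$, exactly as required.

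The main obstacle I anticipate is establishing flatness of $B \hookrightarrow S$ cleanly; once flatness is in hand, each of the three statements follows from classical commutative algebra combined with the hypothesis $f(P)S = P$ on positive-height primes. A secondary subtlety arises in the mixed case of part (2), where one must invoke the always-valid containment $f(P) \subseteq P$ rather than the stronger identity $f(P)S = P$ (which is only available for positive-height $P$) in order to conclude $P_1 \subseteq P_2$.
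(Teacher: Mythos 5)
Your proof is correct. Note that the paper does not reprove this statement---it is quoted verbatim as Lemma 6.3 of \cite{Colbert}---but your route (deduce flatness of $B \hookrightarrow S$ from $\widehat{B}=\widehat{S}$ by factoring $B \to S \to \widehat{S}$ and descending along the faithfully flat map $S \to \widehat{S}$, then use the height inequality for (1), the identity $f(P)S=P$ on positive-height primes for (2), and going-down for (3)) is sound, and the flatness step is exactly the fact the present paper itself invokes in the proof of Lemma \ref{ass and subrings, good remain good}. Your handling of the mixed case in (2), using only the containment $f(P_2) \subseteq P_2$ when $P_2$ is minimal, is the right care to take.
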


\begin{lemma}\label{lifting chaing from B to S}
    Let $(S,M)$ and $(B,B \cap M)$ be as in Theorem $\ref{gluing theorem}$. Suppose that $\mathfrak{p} \in \Min(B)$. Then, there exists $P \in \Min(S)$ with $P \cap B=\mathfrak{p}$ such that $\dim(S/P)=\dim(B/\mathfrak{p})$.
\end{lemma}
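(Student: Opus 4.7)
The plan is to build a saturated chain in $B$ from $\mathfrak{p}$ up to the maximal ideal, lift it into $S$ using Lemma~\ref{lemma about existance of minimal prime in S}, and then check that the bottom of the lift is a minimal prime with the required coheight.

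First, I would set $d := \dim(B/\mathfrak{p})$ and choose a maximal chain $\mathfrak{p} = \mathfrak{p}_0 \subsetneq \mathfrak{p}_1 \subsetneq \cdots \subsetneq \mathfrak{p}_d = B \cap M$ of prime ideals in $B$. Since $(B \cap M) \cap B = B \cap M \supseteq \mathfrak{p}_{d-1}$, part~(3) of Lemma~\ref{lemma about existance of minimal prime in S} (applied with $P_1 = M$ and any $P_2 \in \Spec(S)$ lying over $\mathfrak{p}_{d-1}$, which exists by surjectivity of $f$) produces a prime $Q_{d-1} \subseteq M$ of $S$ with $Q_{d-1} \cap B = \mathfrak{p}_{d-1}$. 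Iterating this process downward gives primes $Q_0 \subseteq Q_1 \subseteq \cdots \subseteq Q_d = M$ of $S$ with $Q_i \cap B = \mathfrak{p}_i$ for each $i$. These inclusions are automatically strict because the intersections $\mathfrak{p}_i$ with $B$ are distinct.

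Next, I would set $P := Q_0$ and verify $P \in \Min(S)$. If $P$ had positive height in $S$, then by condition~\ref{property about f map} of Theorem~\ref{gluing theorem}, $f(P) = \mathfrak{p}$ would also have positive height in $B$, contradicting the minimality of $\mathfrak{p}$. Therefore $P$ is a minimal prime of $S$ lying over $\mathfrak{p}$.

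Finally, I would pin down $\dim(S/P)$. The chain $P = Q_0 \subsetneq Q_1 \subsetneq \cdots \subsetneq Q_d$ immediately gives $\dim(S/P) \geq d$. For the reverse inequality, take any chain $P = R_0 \subsetneq R_1 \subsetneq \cdots \subsetneq R_k$ in $S$. Intersecting with $B$ yields $\mathfrak{p} = R_0 \cap B \subseteq R_1 \cap B \subseteq \cdots \subseteq R_k \cap B$. By part~(2) of Lemma~\ref{lemma about existance of minimal prime in S}, a coincidence $R_i \cap B = R_{i+1} \cap B$ forces both $R_i$ and $R_{i+1}$ to be minimal primes of $S$, but this is incompatible with $R_i \subsetneq R_{i+1}$ since distinct minimal primes are incomparable. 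Hence each inclusion is strict, producing a chain of length $k$ in $B/\mathfrak{p}$, so $k \leq d$. Combining the two bounds gives $\dim(S/P) = d = \dim(B/\mathfrak{p})$, completing the proof.

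The only delicate point is confirming strictness of the intersected chain in the upper-bound argument, but this is precisely what part~(2) of Lemma~\ref{lemma about existance of minimal prime in S} is designed to handle, so I do not expect a serious obstacle here; the tools set up in Theorem~\ref{gluing theorem} and Lemma~\ref{lemma about existance of minimal prime in S} align almost perfectly with what is needed.
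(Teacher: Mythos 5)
Your proof is correct and follows essentially the same strategy as the paper's: lift a maximal chain from $B/\mathfrak{p}$ into $S$ to get $\dim(S/P)\ge\dim(B/\mathfrak{p})$, then push an arbitrary chain in $S/P$ down to $B$ using part (2) of Lemma~\ref{lemma about existance of minimal prime in S} to get the reverse inequality. The only cosmetic difference is that you lift the chain by iterating part (3) of that lemma downward, whereas the paper lifts the positive-height portion via the inclusion-preserving bijection of Remark~\ref{nicecorrespondence} and invokes part (3) only once at the bottom.
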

\begin{proof}
Let $\dim(B/\mathfrak{p}) = m$, and suppose that $\mathfrak{p} = \mathfrak{p}_0 \subsetneq \mathfrak{p}_1 \subsetneq \dots \subsetneq \mathfrak{p}_m = B \cap M$ is a saturated chain of prime ideals of $B$. Since the map $f: \spec(S) \to \spec(B)$ defined by $f(P) = P \cap B$ is surjective, for each $i = 0, \dots, m$ there exists a prime ideal $P_i \in \spec(S)$ with $\mathfrak{p}_i = P_i \cap B$.

Because there is an inclusion-preserving bijection between the prime ideals of positive height in $B$ and in $S$, we have $P_1\subsetneq P_2\subsetneq\dots\subsetneq P_m=M$ where $P_i$ has positive height for all $i = 1,2, \ldots ,m$. Moreover, by Lemma \ref{lemma about existance of minimal prime in S}, there exists a prime ideal $Q$ of $S$ such that $Q\cap B=\mathfrak{p}$ and $Q\subseteq P_1$. Note that since $\mathfrak{p}$ is minimal and $Q \cap B = \mathfrak{p}$, condition \ref{property about f map} of Theorem \ref{gluing theorem} implies that $Q$ is a minimal prime ideal of $S$. If $Q = P_1$, then $P_1$ is a minimal prime ideal of $S$, a contradiction.
%We now claim that the inclusion $Q\subseteq P_1$ is saturated.
%Assume for contradiction that there exists a prime ideal $Q_1$ of $S$ with $Q\subsetneq Q_1\subsetneq P_1$. Since the saturated chain $\mathfrak{p}=Q\cap B\subsetneq P_1\cap B$ in $B$ admits no intermediate prime ideal, it must be that either $Q\cap B=Q_1\cap B$ or $Q_1\cap B=P_1\cap B$. Noting that $Q_1$ is not a minimal prime ideal of $S$, both cases violate condition \ref{condition about minimals in lemma 6.3} of Lemma \ref{lemma about existance of minimal prime in S}.
Thus, $Q \subsetneq P_1 \subsetneq P_2 \subsetneq \dots \subsetneq P_m = M$, and it follows that $\dim(B/\mathfrak{p}) \leq \dim(S/Q)$. 

Now, suppose that $\dim(S/Q) = n$ and consider a saturated chain of prime ideals $Q \subsetneq Q_1 \subsetneq Q_2 \subsetneq \dots \subsetneq Q_n = M$ in $S$. Then the inclusion-preserving bijection between the prime ideals of positive height in $B$ and $S$ results in the chain of prime ideals $\mathfrak{p} = Q \cap B \subseteq Q_1 \cap B \subsetneq Q_2 \cap B \subsetneq \dots \subsetneq Q_n \cap B = M \cap B$ of $B$. If $Q \cap B = Q_1 \cap B$, then condition \ref{condition about minimals in lemma 6.3} of Lemma \ref{lemma about existance of minimal prime in S} implies that $Q_1$ is a minimal prime ideal of $S$, a contradiction. Thus, we have $\mathfrak{p} = Q \cap B \subsetneq Q_1 \cap B \subsetneq Q_2 \cap B \subsetneq \dots \subsetneq Q_n \cap B = M \cap B$ showing that $\dim(S/Q) \leq \dim(B/\mathfrak{p})$. Therefore, letting $P=Q$, we obtain a minimal prime ideal $P$ of $S$ with $\dim(S/P)=\dim(B/\mathfrak{p})$ and $P \cap B=\mathfrak{p}$.
\end{proof}

We now state The Growing Theorem from \cite{Colbert}, which is the ring-theoretic version of the opposite of the retraction operation.
\begin{theorem}[Theorem 3.1, \cite{Colbert}]\label{the growing theorem}(The Growing Theorem). Let $B$ be a local ring containing an infinite field $K.$ Let $\Min(B) = \{P_1, P_2, \dots, P_m\}$. Let $n$ be a positive integer, let $y$ and $z$ be indeterminates, and let $A = B[[y, z]]$. Then there are distinct prime ideals $Q_1, Q_2,\dots, Q_n$ of $A$ such that
   \begin{enumerate}
       \item For every $i = 1,2,\dots,n$, we have $Q_i \subseteq (P_j,y,z)A$ if and only if $j = 1$.
       \item For every $i = 1,2,\dots,n$ the prime ideal $(P_1,y,z)A/Q_i$ in the ring $A/Q_i$ has height one, and
       \item If $J = (\bigcap_{i=1}^nQ_i) \cap P_2' \cap \cdots \cap P_m'$ where $P_j' = (P_j,y,z)A$ then the minimal prime ideals of $S = A/J$ are $\{Q_1/J,Q_2/J,\dots,Q_n/J,P_2'/J,\dots,P_m' /J\}$, and $(P_1,y,z)/J$ is a height one prime ideal of $S$ containing exactly $n$ minimal prime ideals of $S$, namely $\{Q_1/J,Q_2/J,\dots,Q_n/J\}$.
   \end{enumerate}
\end{theorem}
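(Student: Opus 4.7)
The plan is to produce the $Q_i$ explicitly as height-one primes of $A = B[[y,z]]$ carved out of $P_1 A$ by adjoining a well-chosen linear form in $y$ and $z$. Since $K$ is infinite, I would pick distinct elements $c_1, \dots, c_n \in K$ and set
$$Q_i := (P_1, \, y - c_i z)\, A \quad (i=1,\dots,n).$$
Everything in the theorem should follow from a single structural fact about these primes together with repeated use of the evaluation homomorphism $\varepsilon: A \longrightarrow B$, $y \mapsto 0$, $z \mapsto 0$.

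The first technical step is to identify the quotients $A/Q_i$. I would argue that the substitution map $A \longrightarrow (B/P_1)[[z]]$ sending $b \mapsto b + P_1$ for $b \in B$, $y \mapsto c_i z$, and $z \mapsto z$ is a well-defined surjective ring homomorphism (well-defined because, for each fixed $z$-degree, only finitely many monomials $b_{jk} y^j z^k$ contribute, and the kernel contains both $P_1 A$ and $y - c_i z$). Using the splitting $B[[z]] \hookrightarrow A$ composed with the quotient, a standard power-series division shows that in fact the kernel is exactly $Q_i$, so that $A/Q_i \cong (B/P_1)[[z]]$. In particular $Q_i$ is prime of height one. Under this isomorphism the image of $(P_1, y, z)A$ is $(c_i z, z) = (z)$, a height-one prime, which settles condition (2).

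For condition (1), the containment $Q_i \subseteq (P_1, y, z)A$ is immediate from $y - c_i z \in (y,z)A$. For the converse, suppose $j \neq 1$ and pick $p \in P_1 \setminus P_j$ (available since the minimal primes are incomparable). Then $p \in Q_i$, and if $p$ were in $(P_j, y, z)A$, applying $\varepsilon$ would force $p \in P_j$, a contradiction. The same evaluation trick then proves all pairwise incomparability statements needed for (3): $P_j' \not\subseteq P_k'$ for $j \neq k$, $P_j' \not\subseteq Q_i$ (using $z \notin Q_i$, since $z$ is nonzero in $(B/P_1)[[z]]$), and $Q_i \not\subseteq P_j'$ for $j \neq 1$. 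Incomparability among the $Q_i$ themselves follows because in $(B/P_1)[[y,z]]$ the ideals generated by $y - c_i z$ and $y - c_k z$ are distinct height-one primes when $c_i \neq c_k$ (their sum already contains $(y,z)$).

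Once pairwise incomparability is established, condition (3) is essentially formal: each minimal prime of $J = (\bigcap Q_i) \cap \bigcap_{j \geq 2} P_j'$ must contain one of the primes in the intersection, and incomparability forces it to equal that prime, so the minimal primes of $S = A/J$ are exactly $\{Q_i/J\} \cup \{P_j'/J\}_{j \geq 2}$. The minimal primes of $S$ contained in $(P_1, y, z)/J$ are then exactly the $Q_i/J$ (no $P_j'$ sits inside $P_1'$ by the evaluation argument), and the height-one claim for $(P_1, y, z)/J$ in $S$ follows from part (2) via localization at $P_1'$. The main obstacle I expect to wrestle with is a clean justification of $A/Q_i \cong (B/P_1)[[z]]$; the substitution $y \mapsto c_i z$ requires verifying both that it descends to a power-series homomorphism and that no unexpected elements lie in the kernel. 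All remaining steps reduce to the incomparability checks powered by $\varepsilon$.
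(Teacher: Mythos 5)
Your construction $Q_i=(P_1,\,y-c_iz)A$ with distinct $c_i\in K$ is exactly the one used in the cited proof (compare Remark \ref{remark about retraction and poset}, where $Q_j=(P_1,y+\beta_jz)A$ for distinct $\beta_j$), and your identification $A/Q_i\cong (B/P_1)[[z]]$ together with the evaluation-at-$(y,z)=(0,0)$ incomparability checks is the same argument. The proof is correct and essentially identical to the paper's; the only cosmetic difference is that the paper takes the scalars nonzero, which is not needed for the statement as given.
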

%\subsubsection{Retraction and Splitting Theorems}
%In this section, we present two key results from \cite{Colbert} regarding retractions and height-zero splittings, including remarks highlighting the central ideas of their proofs. These lemmas are essential in the proof by Colbert and Loepp that any finite poset is isomorphic to a saturated subset of the prime spectrum of a quasi-excellent domain (Theorem~$\ref{theorem 6.5 in colbert}$). In Section~$\ref{quasi-excellent ring construction}$, we will generalize these lemmas to establish a stronger form of this theorem.

We next state two important results from \cite{Colbert}, along with specific commentary about the parts of their proofs that will be useful later. In particular, Theorem \ref{retraction and ring} shows that if $X$ and $Z$ are finite posets, $X$ is a retraction of $Z$, $B$ is a ring satisfying some nice properties and there is a saturated embedding from $X$ to $\Spec(B)$, then one can find a ring $S$ satisfying some nice properties such that there is a saturated embedding from $Z$ to $\Spec(S)$.  Theorem \ref{height zero gluing theorem} is the analogous result for height zero gluings.

\begin{theorem}[Theorem 6.2, \cite{Colbert}]\label{retraction and ring}
    Let $X$ and $Z$ be finite posets, and suppose that $X$ is a retraction of $Z.$ Let $B$ be a reduced local ring containing the rationals with $|B| = |B/M| = c$. Suppose $\psi : X \longrightarrow \spec(B)$ is a saturated embedding such that $\psi(\min X) = \Min(B)$. Then there exists a local ring S such that
     \begin{enumerate}
     \item  $S$ is reduced,
     \item $S$ contains the rationals,
     \item If $N$ is the maximal ideal of $S$, then $|S| = |S/N| = c$,
     \item There is a saturated embedding $\phi : Z \longrightarrow \spec (S)$ such that $\phi(\min Z ) = \Min(S)$.   
 \end{enumerate}
Moreover, if $B$ is quasi-excellent, then so is $S$.
\end{theorem}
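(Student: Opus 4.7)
The plan is to invoke The Growing Theorem (Theorem~\ref{the growing theorem}) to build $S$ directly from $B$, using the retraction data to determine the parameter $n$. Let $x \in Z$ be the simple height one node witnessing the retraction, so that $X = Z \setminus D_x$ with $D_x = \{u \in Z : u < x\}$, and set $n = |D_x|$. Since $x$ has height one in $Z$, every element of $D_x$ lies in $\min Z$, and since all predecessors of $x$ in $Z$ are removed, $x$ becomes a minimal node of $X$. Hence $\psi(x) \in \Min(B)$; after relabeling, write $\Min(B) = \{P_1, \ldots, P_m\}$ with $P_1 = \psi(x)$, and enumerate $D_x = \{u_1, \ldots, u_n\}$. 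Applying Theorem~\ref{the growing theorem} to $B$ and this $n$ would produce $A = B[[s,t]]$, prime ideals $Q_1, \ldots, Q_n$ of $A$, an ideal $J$, and the ring $S := A/J$ whose minimal primes are $\{Q_1/J, \ldots, Q_n/J, P_2'/J, \ldots, P_m'/J\}$, where $P_j' := (P_j, s, t)A$.

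I would then define $\phi : Z \longrightarrow \spec(S)$ by $\phi(u_i) := Q_i/J$ for each $u_i \in D_x$ and $\phi(z) := (\psi(z), s, t)A/J$ for $z \in X$ (so in particular $\phi(x) = P_1'/J$). To see each $\phi(z)$ is well-defined, note that $\psi(z)$ contains some minimal prime $P_k$ of $B$, so $(\psi(z), s, t)A \supseteq P_k'$; when $k \geq 2$ the ideal $P_k'$ is a factor of $J$, and when $k = 1$ property (1) of Theorem~\ref{the growing theorem} gives $\bigcap_i Q_i \subseteq P_1'$, so in both cases $J \subseteq (\psi(z), s, t)A$. The auxiliary properties of $S$ --- reduced (since $(0)$ is the intersection of the finitely many distinct minimal primes listed in property (3)), local, containing $\mathbb{Q}$, and of cardinality $|S| = |S/N| = c$ --- follow routinely. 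Quasi-excellence transfers to $S$ because $B$ being a quasi-excellent local ring implies $B[[s,t]]$ is a quasi-excellent local ring, and this property descends to the quotient $S = A/J$. Moreover, $\phi(\min Z) = \Min(S)$ is immediate from property (3), since $\phi$ sends $D_x$ bijectively onto $\{Q_i/J\}$ and sends $\min X \setminus \{x\}$ onto $\{P_k'/J : 2 \leq k \leq m\}$ via $\psi$.

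The substantive work is verifying that $\phi$ is a saturated embedding, which I would carry out by case analysis on whether each endpoint lies in $X$ or in $D_x$. When both endpoints lie in $X$, primes of $A$ lying between $(\psi(z_1), s, t)A$ and $(\psi(z_2), s, t)A$ all contain $(s,t)A$, and via the isomorphism $A/(s,t) \cong B$ correspond bijectively (with covers preserved) to primes of $B$ between $\psi(z_1)$ and $\psi(z_2)$; so order-preservation, order-reflection, and cover-preservation all transfer from the corresponding properties of $\psi$. The cover-preservation case $u_i <_c x$ in $Z$ translating to $\phi(u_i) <_c \phi(x)$ in $\spec(S)$ is exactly the content of property (2) of Theorem~\ref{the growing theorem}.

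The main obstacle will be the order-reflection case where $\phi(u_i) \subseteq \phi(z_2)$ for some $z_2 \in X$: one must conclude $z_2 \geq x$ in $X$. I plan to handle this by contraction to $B$: since $Q_i \subseteq P_1'$, one has $Q_i \cap B \subseteq P_1' \cap B = P_1$, and because $Q_i \cap B$ is a prime of $B$ contained in the minimal prime $P_1$, it must equal $P_1$. Then $P_1 = Q_i \cap B \subseteq (\psi(z_2), s, t)A \cap B = \psi(z_2)$, so $\psi$ being a poset embedding yields $z_2 \geq x$ in $X$ and hence $z_2 \geq u_i$ in $Z$. The dual potential failure --- $\phi(z_1) \subseteq \phi(u_j)$ for some $z_1 \in X$ --- cannot occur, because it would force $(s,t)A \subseteq Q_j$, whereupon $Q_j/(s,t)A$ would be a prime of $B$ contained in $P_1$ and hence equal to $P_1$, giving $Q_j = P_1'$ and contradicting the strict containment $Q_j \subsetneq P_1'$ implicit in property (2).
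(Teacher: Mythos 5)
Your proposal is correct and follows essentially the same route the paper records for this result: Remark~\ref{remark about retraction and poset} describes exactly this construction, namely applying The Growing Theorem with $n=|D_x|$ to get $S=B[[y,z]]/J$ and defining $\phi$ by sending $u_i\in D_x$ to $Q_i/J$ and $z\in X$ to $(\psi(z),y,z)/J$. Your verification of the embedding and saturation properties (contracting to $B$ via $Q_i\cap B=P_1$ and using the isomorphism $A/(y,z)\cong B$) matches the argument in \cite{Colbert}.
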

\begin{remark}\label{remark about retraction and poset}
Let $X, Z, B$, and $S$ be as in Theorem \ref{retraction and ring}. Then, since $X$ is a retraction of $Z$, we have $X = Z \setminus D_p$, where $p$ is a simple node in $Z$ and $D_p =\{u \in Z : u <_c p\}$. Let $\min X = \{p, x_2, \dots, x_n\}$ and let $D_p = \{q_1, \dots, q_m\}$. Since $\psi(\min X) = \Min(B)$, we may write $\Min(B) = \{\psi(p) := P_1, \psi(x_2) := P_2, \dots, \psi(x_n) := P_n\}$.

The ring $S$ in Theorem \ref{retraction and ring} is constructed by applying Theorem \ref{the growing theorem} to $B$. Explicitly, we have $S = B[[y,z]]/J$ for the ideal $J$ described in Theorem \ref{the growing theorem}. The minimal prime ideals of $S$ are given by $$\Min(S) = \{Q_1/J, \dots, Q_m/J, P_2'/J, \dots, P_n'/J\},$$ where for each $i = 2, \dots, n$ and $j = 1, \dots, m$, $P_i' = (P_i, y, z)$ and $Q_j=(P_1,y+\beta_jz)$ where $\beta_1, \beta_2, \ldots ,\beta_m$ are distinct nonzero rationals. 
% . Furthermore, for each $j = 1, \dots, m$ and each $i = 2, \dots, n$, the inclusion $Q_j/J \subseteq P_i'/J$ holds if and only if $i = 1$, and $P_1'/J$ is a height-one prime ideal of $S$. 
Moreover, the saturated embedding $\phi: Z \longrightarrow \spec(S)$ is explicitly defined by:
   \[
\phi(x) = \begin{cases} 
       (\psi(x), y, z)/J & x \in X, \\
       Q_i/J & x=q_i \in D_p.
   \end{cases}
\]

\end{remark}

\begin{theorem}[Theorem 6.4, \cite{Colbert}]\label{height zero gluing theorem}
    Let $Z$ be a nonempty finite poset, and let $Y$ be a height zero gluing of $Z$. Let $(S, M)$ be a reduced local ring containing $\Q$ such that $S/M$ is uncountable and $|S| = |S/M|$. Suppose $\psi : Z \longrightarrow \spec(S)$ is a saturated embedding such that $\psi(\min Z) = \Min(S)$. Then there exists a reduced local ring $B \subseteq S$ with maximal ideal $B \cap M$ such that:
         \begin{enumerate}
     \item $B$ contains $\Q$,
     \item $\widehat{B}=\widehat{S}$,
     \item $B/(B \cap M )$ is uncountable, $|B| = |B/(B \cap M )|$, and
     \item There is a saturated embedding $\varphi : Y \longrightarrow \spec (B)$ such that $\varphi(\min Y ) = \Min(B)$.   
 \end{enumerate}
Moreover, if $S$ is quasi-excellent, then so is $B$.
\end{theorem}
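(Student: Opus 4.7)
The plan is to apply the Gluing Theorem (Theorem \ref{gluing theorem}) to $S$ with the partition of $\Min(S)$ induced by the height zero gluing map $g \colon Z \to Y$. Since $g(\min Z) = \min Y$ by Lemma \ref{gluing preserves dimension}(1) and $\psi(\min Z) = \Min(S)$ by hypothesis, the collection $C_y = \{\psi(z) : z \in \min Z \text{ and } g(z) = y\}$, indexed by $y \in \min Y$, forms a partition of $\Min(S)$ (with at most one non-singleton block, corresponding to the complete subset of $\min Z$ along which the gluing occurs). The Gluing Theorem then produces a reduced local ring $B \subseteq S$ satisfying conclusions (1)--(3) of the theorem, and the claim that $B$ is quasi-excellent when $S$ is should follow from $\widehat{B} = \widehat{S}$ together with the flatness of $B \hookrightarrow S$, paralleling the analogous argument in \cite{Colbert}.

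Next I would define $\varphi \colon Y \to \spec(B)$. For $y \in \min Y$, I would set $\varphi(y) = \psi(z) \cap B$ for any $z \in g^{-1}(y) \cap \min Z$; this is independent of the choice of $z$ by condition (4) of the Gluing Theorem, and $\varphi(y) \in \Min(B)$ by Lemma \ref{lemma about existance of minimal prime in S}(1). For $y \in Y$ of positive height, the height zero gluing condition forces $g^{-1}(y)$ to be a singleton $\{z_y\}$ (any two distinct preimages would both lie in the glued complete subset of $\min Z$), and I would set $\varphi(y) = \psi(z_y) \cap B$, which has positive height by Remark \ref{nicecorrespondence}. These definitions give $\varphi(\min Y) = \Min(B)$. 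The properties of $\varphi$ being a poset map and a poset embedding would follow from Theorem \ref{theorem about order induced by gluings}, the partition structure on $\Min(S)$, and Lemma \ref{lemma about existance of minimal prime in S}(3); in particular, combining Remark \ref{nicecorrespondence} with Lemma \ref{lemma about existance of minimal prime in S}(3) promotes the bijection on positive-height primes to an order isomorphism between the positive-height subposets of $\Spec(S)$ and $\Spec(B)$, which drives much of the verification.

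The main obstacle is showing that $\varphi$ preserves covers when $y \in \min Y$ is covered by a positive-height element $y'$. The plan here is to use Lemma \ref{gluing preserves dimension}(3) to lift the cover $y <_c y'$ to a cover $\tilde z <_c z_{y'}$ in $Z$ with $\tilde z \in \min Z$ and $g(\tilde z) = y$, so that $\psi(\tilde z) <_c \psi(z_{y'})$ in $\spec(S)$ by the saturatedness of $\psi$. If a prime $\mathfrak q$ of $B$ satisfied $\varphi(y) \subsetneq \mathfrak q \subsetneq \varphi(y')$, then $\mathfrak q$ would have positive height, hence $\mathfrak q = Q \cap B$ for a unique positive-height prime $Q$ of $S$ by Remark \ref{nicecorrespondence}. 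Two successive applications of Lemma \ref{lemma about existance of minimal prime in S}(3), together with the fact that minimal primes of $B$ cannot arise as contractions of positive-height primes of $S$, would then produce a minimal prime $Q'' = \psi(z^*)$ of $S$ with $z^* \in \min Z \cap g^{-1}(y)$ and $Q'' \subsetneq Q \subseteq \psi(z_{y'})$. Writing $Q = \psi(z^{**})$ with $z^{**}$ of positive height in $Z$, the image $g(z^{**})$ would lie in $[y, y']$, and so by the cover hypothesis must equal $y$ or $y'$; however, $g(z^{**}) = y$ would force $z^{**} \in \min Z$ (every preimage of a minimal element of $Y$ is minimal in $Z$ for a height zero gluing), contradicting that $z^{**}$ has positive height, while $g(z^{**}) = y'$ would force $z^{**} = z_{y'}$ by uniqueness of the positive-height preimage, contradicting $Q \subsetneq \psi(z_{y'})$. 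The case in which both $y$ and $y'$ have positive height reduces directly to the order isomorphism on positive-height primes (a cover in the positive-height subposet of $\Spec(B)$ is automatically a cover in $\Spec(B)$ once the smaller prime has positive height), and the remaining cases are vacuous because two distinct comparable elements cannot both be minimal in $Y$.
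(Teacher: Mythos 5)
Your construction coincides with the one used in \cite{Colbert} and summarized in Remark \ref{remark about properties of height zero gluing}: apply Theorem \ref{gluing theorem} to the partition of $\Min(S)$ induced by the gluing map $g$, and define $\varphi$ so that $\varphi \circ g = f \circ \psi$. Conclusions (1)--(3), the quasi-excellence claim, the identification $\varphi(\min Y) = \Min(B)$, the poset-embedding verification via Theorem \ref{theorem about order induced by gluings} and Lemma \ref{lemma about existance of minimal prime in S}, and the cover-preservation argument when both elements have positive height are all sound as outlined.

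There is, however, one step that fails as written, in the main case where $y \in \min Y$ is covered by a positive-height $y'$. After producing the chain $\psi(z^*) \subsetneq Q \subsetneq \psi(z_{y'})$ with $z^* \in g^{-1}(y) \cap \min Z$, you write ``$Q = \psi(z^{**})$ with $z^{**}$ of positive height in $Z$'' and run your case analysis on $g(z^{**})$. But $Q$ is an arbitrary positive-height prime of $S$ strictly between $\psi(z^*)$ and $\psi(z_{y'})$, and there is no reason for it to lie in the image of $\psi$: the poset $Z$ is finite while $\spec(S)$ is not, and the entire point of the contradiction is that no such $Q$ exists, so one cannot assume it comes from $Z$. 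The repair is to run exactly the same case analysis one level down, on a hypothetical element $w \in Z$ with $z^* < w < z_{y'}$: then $y \leq g(w) \leq y'$, so $g(w) = y$ forces $w \in \min Z$ (impossible since $z^* < w$) and $g(w) = y'$ forces $w = z_{y'}$ (impossible since $w < z_{y'}$). Hence $z^* <_c z_{y'}$ in $Z$; note this must be verified for $z^*$ itself, since the element $z^*$ produced by your two applications of Lemma \ref{lemma about existance of minimal prime in S}(3) need not equal the $\tilde z$ obtained from Lemma \ref{gluing preserves dimension}(3). Since $\psi$ is a saturated embedding, $\psi(z^*) <_c \psi(z_{y'})$ in $\spec(S)$, which contradicts the existence of $Q$ and completes the argument.
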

\begin{remark}\label{remark about properties of height zero gluing}
The ring $B$ in Theorem \ref{height zero gluing theorem} is constructed by applying Theorem \ref{gluing theorem} to a partition of $\Min(S)$ induced by the map $\psi$. More precisely, let $g$ denote the gluing map from $Z$ to $Y$ along $C \subseteq \min Z$. If $C$ is empty, then $B = S$. If $C = \min Z$, we define the partition as $\mathcal{P} = \{\psi(C)\} = \{\psi(\min Z)\}$. Otherwise, we set $\mathcal{P} = \{\psi(C)\} \cup \{\{\psi(x)\} \mid x \in \min Z \setminus C\}$. By applying Theorem \ref{gluing theorem} to this partition, we ensure that the map $ f : \spec(S) \to \spec(B)$ given by $f(P) = P \cap B, $ is surjective and satisfies $f(P)S = P $ for every prime ideal $P$ of $S$ of positive height. Finally, it is shown that the composition $f \circ \psi$ is compatible with $g$. Consequently, $\varphi$ is the unique poset map that satisfies the identity $ f \circ \psi = \varphi \circ g. $

\end{remark}

In \cite{Colbert}, the authors use Theorem \ref{every finite poset reduced to a point}, Theorem \ref{retraction and ring}, and Theorem \ref{height zero gluing theorem} to prove the following result. The goal of Section \ref{quasi-excellent ring construction} in this paper is to prove a stronger form of Theorem \ref{theorem 6.5 in colbert} (see Theorem \ref{local quasi-excellent ring codimension preserving theorem}).
 
\begin{theorem}[Theorem 6.5, \cite{Colbert}]\label{theorem 6.5 in colbert}
     If $X$ is a finite poset, then there exists a local domain $(B,M)$ and a saturated embedding $\Psi : X \longrightarrow \spec(B)$ such that:
     \begin{enumerate}[(i)]
     \item $B$ contains $\Q,$
     \item $|B| = |B/(B \cap M )|=c$, and
     \item $B$ is quasi-excellent.
     \end{enumerate}
\end{theorem}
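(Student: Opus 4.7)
The strategy is to apply the reduction result (Theorem \ref{every finite poset reduced to a point}) to a suitable enlargement of $X$, and then walk back up the reduction sequence by repeatedly invoking Theorem \ref{retraction and ring} and Theorem \ref{height zero gluing theorem}. Because Theorem \ref{every finite poset reduced to a point} needs a unique maximum and the final ring is required to be a domain (that is, to have a unique minimal prime), I would first form the poset $X' := \{\hat{0}\} \cup X \cup \{\hat{1}\}$ with $\hat{0} < x < \hat{1}$ for every $x \in X$. A routine check shows that $X$ is a saturated subset of $X'$, since no element of $X'$ strictly between two elements of $X$ can equal $\hat{0}$ or $\hat{1}$, so covers in $X$ remain covers in $X'$.

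By Theorem \ref{every finite poset reduced to a point} there is a reduction sequence $(X'_1, \dots, X'_n)$ with $X'_1 = \{*\}$ and $X'_n = X'$. I would construct local rings $B_1, \dots, B_n$ and saturated embeddings $\psi_i : X'_i \longrightarrow \spec(B_i)$ by induction on $i$, maintaining throughout that $B_i$ is reduced, contains $\Q$, satisfies $|B_i| = |B_i/M_i| = c$, is quasi-excellent, and satisfies $\psi_i(\min X'_i) = \Min(B_i)$. For the base case take $B_1 = \C$ and $\psi_1(*) = (0)$; all conditions are immediate. For the inductive step, if $X'_i = X'_{i+1}$ set $B_{i+1} = B_i$; if $X'_i$ is a retraction of $X'_{i+1}$ apply Theorem \ref{retraction and ring}; and if $X'_i$ is a height-zero splitting of $X'_{i+1}$, so that $X'_{i+1}$ is a height-zero gluing of $X'_i$, apply Theorem \ref{height zero gluing theorem}. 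Both construction theorems deliver the updated ring together with an embedding sending minimal nodes exactly onto the minimal primes, and both preserve quasi-excellence and the required cardinality bookkeeping.

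Set $B := B_n$ and $\Psi := \psi_n|_X$. Properties (i)--(iii) follow from the induction. The ring $B$ is a domain because $\min X' = \{\hat{0}\}$ forces $\Min(B) = \{\psi_n(\hat{0})\}$ to be a single prime in a reduced ring. Finally, $\Psi$ is still a saturated embedding because $X$ is a saturated subset of $X'$, so covers in $X$ remain covers in $X'$ and hence get sent to covers in $\spec(B)$. All of the substantive technical work is already absorbed into Theorem \ref{retraction and ring} and Theorem \ref{height zero gluing theorem}, so the only genuinely conceptual decision in this proof is the initial choice of $X'$: adjoining a unique top is needed to invoke Theorem \ref{every finite poset reduced to a point}, while adjoining a unique bottom is what forces the final ring to be a domain rather than merely reduced.
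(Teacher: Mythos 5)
Your proposal is correct and follows essentially the same route as the paper: reduce the (suitably completed) poset to a point via Theorem \ref{every finite poset reduced to a point}, then climb back up the reduction sequence with Theorem \ref{retraction and ring} and Theorem \ref{height zero gluing theorem}, exactly as in the paper's own proof of the refined version, Theorem \ref{local quasi-excellent ring codimension preserving theorem}. Your adjunction of $\hat{0}$ and $\hat{1}$ is the right device for passing from the unique-maximum/reduced-ring setting of those tools to an arbitrary finite poset and a local domain.
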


\section{Local Quasi-Excellent Ring Construction}\label{quasi-excellent ring construction}

In this section, we generalize Theorem \ref{theorem 6.5 in colbert} in the following way. Let $Y$ be a finite poset with exactly one maximal element. In Theorem \ref{local quasi-excellent ring codimension preserving theorem}, we show that there is a quasi-excellent local ring $S$ and a saturated embedding $\Psi : Y \longrightarrow \Spec(S)$ such that $\Psi(\min Y) = \Min(S)$.  Moreover, if $P$ is a minimal prime ideal of $S$, then there is a minimal element $z$ of $Y$ such that $\Psi(z) = P$ and the coheight of $\Psi(z)$ is the same as the dimension of $z^{\uparrow}_Y$. In other words, there is a one-to-one correspondence between the minimal elements of $Y$ and the minimal prime ideals of $S$ and, for every minimal element $z$ of $Y$, the longest chain from $z$ to the maximal element of $Y$ is the same as the longest chain of prime ideals from $\Psi(z)$ to the maximal ideal of $S$.
 Theorem $\ref{local quasi-excellent ring codimension preserving theorem}$ establishes an important step in proving Theorem~$\ref{main theorem of characterization of embeddings of UFDs}$, the main result of this paper.

Our proof strategy for Theorem \ref{local quasi-excellent ring codimension preserving theorem} is to adjust the construction of the quasi-excellent ring $B$ and the saturated embedding $\Psi: X \longrightarrow \spec(B)$ in Theorem~$\ref{theorem 6.5 in colbert}$ so that $\Psi$ satisfies our additional desired properties. We begin with an important definition.

\begin{definition}
    Let $R$ be a Noetherian ring, and let $D \subseteq \spec(R)$. Let $X$ be a poset and let $\varphi: X \longrightarrow \spec(R)$ be a saturated embedding. We say $\varphi$ is a \emph{coheight-preserving saturated embedding} along $D$ if for every $P \in D$, there is an element $x \in X$ with $\varphi(x)=P$ and $\dim(R/P)=\dim(x_X^{\uparrow})$. 
\end{definition}

\begin{lemma}\label{MinsgotoMins}
Let $R$ be a Noetherian ring, $X$ a finite poset and $D = \Min(R)$.  Suppose $\phi : X \longrightarrow \Spec(R)$ is a coheight-preserving saturated embedding along $D$.  Then $\phi(\min X) = \Min(R)$.
\end{lemma}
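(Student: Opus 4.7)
The plan is to prove the two inclusions $\Min(R) \subseteq \phi(\min X)$ and $\phi(\min X) \subseteq \Min(R)$ separately. Both directions rely almost entirely on the fact that a saturated embedding is a poset embedding (hence order-reflecting and injective), together with the existential half of the coheight-preserving hypothesis; the dimension equality in the definition plays no role here.

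For the inclusion $\Min(R) \subseteq \phi(\min X)$, I would start with an arbitrary $P \in \Min(R)$. Since $\phi$ is coheight-preserving along $D = \Min(R)$, there exists $x \in X$ with $\phi(x) = P$. To see that such an $x$ must lie in $\min X$, I would argue by contradiction: if $y \in X$ satisfied $y < x$, then $\phi$ being a poset map gives $\phi(y) \subseteq \phi(x) = P$, and $\phi$ being an embedding (hence injective, as noted right after the definition of a poset embedding) gives $\phi(y) \neq \phi(x)$, so $\phi(y) \subsetneq P$. This contradicts the minimality of $P$ in $\Spec(R)$.

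For the inclusion $\phi(\min X) \subseteq \Min(R)$, I would take $x \in \min X$ and suppose for contradiction that $\phi(x)$ is not a minimal prime of $R$. Then some $Q \in \Min(R)$ satisfies $Q \subsetneq \phi(x)$. Applying the coheight-preserving hypothesis to $Q$ produces an element $y \in X$ with $\phi(y) = Q$, so $\phi(y) \subsetneq \phi(x)$. Since $\phi$ is a poset embedding, the relation $\phi(y) \subseteq \phi(x)$ forces $y \leq x$, and injectivity of $\phi$ rules out $y = x$, yielding $y < x$. This contradicts the assumption that $x$ is a minimal element of $X$, completing the proof.

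No step here looks genuinely difficult; the only mild subtlety is the need to invoke injectivity of $\phi$ in order to upgrade the non-strict inclusion $\phi(y) \subseteq \phi(x)$ coming from the poset-map property into the strict inclusion $\phi(y) \subsetneq \phi(x)$ required for the contradictions. Because the proof is so short and purely formal, I would write it as a single compact paragraph in the paper rather than breaking it into two labeled cases.
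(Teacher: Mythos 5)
Your proposal is correct and follows essentially the same route as the paper's proof: both directions combine the existential part of the coheight-preserving hypothesis with the order-reflecting property of a poset embedding. The only difference is cosmetic—you phrase both inclusions as arguments by contradiction, whereas the paper argues directly by comparing $x$ with a minimal element below it (resp.\ a minimal prime below $\phi(y)$)—but the underlying reasoning is identical.
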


\begin{proof}
Suppose $P \in \Min(R)$. Then there exists $x \in X$ with $\phi(x) = P$.  Let $x_0$ be an element of $\min X$ with $x_0 \leq x$. As $\phi$ is a poset map, $\phi(x_0) \subseteq \phi(x) = P$. Since $P$ is a minimal prime ideal of $R$, we have $\phi(x_0) = \phi(x)$ and so since $\phi$ is a poset embedding, $x_0 = x$. It follows that $x \in \min X$ and we have $\Min(R) \subseteq \phi(\min X)$. 

Suppose $y \in \min X$. Then there is a minimal prime ideal $P$ of $R$ such that $P \subseteq \phi(y)$. Now, there exists $x \in X$ with $\phi(x) = P$. Since $\phi$ is a poset embedding and $\phi(x) \subseteq \phi(y)$, we have $x \leq y$. As $y$ is a minimal element of $X$, $x = y$ and so $\phi(y) = \phi(x) = P$. Thus $\phi(\min X) \subseteq \Min(R)$ and the result follows.
\end{proof}

\begin{remark}\label{remark about dimension and codimension preserving}
Let $R$ be a Noetherian ring, $X$ a finite poset and $D = \Min(R)$.  Suppose $\phi : X \longrightarrow \Spec(R)$ is a coheight-preserving saturated embedding along $D$. By Lemma \ref{MinsgotoMins}, $\phi(\min X) = \Min(R)$ and so we can write $\min X = \{x_1,\dots,x_n\}$ and correspondingly $\Min(R) = \{P_1 := \psi(x_1), \dots, P_n := \psi(x_n)\}$. Now,
$$
\dim(R) = \max\{\dim(R/P_1), \dots, \dim(R/P_n)\}.
$$
By the coheight-preserving property, we obtain
$$
\dim(R) = \max\{\dim(x{_1}^{\uparrow}_X), \dots, \dim(x{_n}^{\uparrow}_X)\} = \dim(X),
$$ and it follows that $\phi$ is a dimension-preserving saturated embedding.
\end{remark}

%The following remark illustrates why a codimension-preserving saturated embedding along minimal primes is a stronger condition than a dimension-preserving saturated embedding.

%\begin{remark}\label{remark about dimension and codimension preserving} Suppose $R$ is a Noetherian ring, $X$ is a finite poset, and $\psi: X \longrightarrow \spec(R)$ is a codimension-preserving embedding along $\Min(X)$. Note that if $\psi(\Min(X)) = \Min(R)$, we necessarily have $\dim(X) = \dim(R)$. Indeed, since $\psi(\Min(X)) = \Min(R)$, we can write $\Min(X) = \{x_1,\dots,x_n\}$ and correspondingly $\Min(R) = \{P_1 := \psi(x_1), \dots, P_n := \psi(x_n)\}$. Thus,
%$$
%\dim(R) = \max\{\dim(R/P_1), \dots, \dim(R/P_n)\}.
%$$
%By the codimension-preserving property, we obtain
%$$
%\dim(R) = \max\{\dim(x{_1}^{\uparrow}), \dots, \dim(x{_n}^{\uparrow})\} = \dim(X).
%$$  
%\end{remark}

  We now present two lemmas that are crucial in the proof of Theorem $\ref{local quasi-excellent ring codimension preserving theorem}$.  The first lemma is a generalization of Theorem $\ref{height zero gluing theorem}$ and the second is a generalization of Theorem $\ref{retraction and ring}$. 
 
\begin{lemma}\label{codimension preserving gluing thm}
    Let $Z$ be a finite poset with a unique maximal element, and let $Y$ be a height zero gluing of $Z$. Let $(S, M)$ be a reduced local ring containing $\Q$ such that $S/M$ is uncountable and $|S| = |S/M|$. Suppose $\psi : Z \longrightarrow \spec(S)$ is a coheight-preserving saturated embedding along $\Min(S)$. Then there exists a reduced local ring $B \subseteq S$ with maximal ideal $B \cap M$ such that:
         \begin{enumerate}
     \item\label{condition 1 codimension preserving gluing thm} $B$ contains $\Q$,
     \item $\widehat{B}=\widehat{S}$,
     \item \label{condition 3 codimension preserving gluing thm}$B/(B \cap M )$ is uncountable, $|B| = |B/(B \cap M )|$, and
     \item there is a coheight-preserving saturated embedding $\phi : Y \longrightarrow \spec (B)$ along $\Min(B)$.
 \end{enumerate}
Moreover, if $S$ is quasi-excellent, then so is $B$.
\end{lemma}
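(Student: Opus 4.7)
The plan is to apply Theorem \ref{height zero gluing theorem} directly to the given data to produce $B \subseteq S$ and a saturated embedding $\phi: Y \longrightarrow \Spec(B)$ with $\phi(\min Y) = \Min(B)$. This immediately secures conditions (\ref{condition 1 codimension preserving gluing thm})--(\ref{condition 3 codimension preserving gluing thm}) and the preservation of quasi-excellence. The hypothesis $\psi(\min Z) = \Min(S)$ required by that theorem is supplied by Lemma \ref{MinsgotoMins} together with the coheight-preserving property of $\psi$. All new content beyond Theorem \ref{height zero gluing theorem} is therefore concentrated in the coheight-preserving clause for $\phi$, so the entire proof will consist of verifying that $\dim(B/\phi(y_0)) = \dim((y_0)^{\uparrow}_{Y})$ for every $y_0 \in \min Y$.

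The argument proceeds in three short steps. First, I identify the fiber of the contraction $f : \Spec(S) \longrightarrow \Spec(B)$ over $\phi(y_0)$ with a poset-theoretic fiber: using the identity $f \circ \psi = \phi \circ g$ from Remark \ref{remark about properties of height zero gluing} (where $g$ is the gluing map), together with $\psi(\min Z) = \Min(S)$ and injectivity of $\phi$ on $\min Y$, one checks that
\[
\{P \in \Min(S) : P \cap B = \phi(y_0)\} = \{\psi(z) : z \in \min Z,\; g(z) = y_0\}.
\]
Second, I extract from Lemma \ref{lifting chaing from B to S} and the inclusion-preserving bijection between positive-height primes of $S$ and $B$ the identity
\[
\dim(B/\mathfrak{p}) = \max\{\dim(S/P) : P \in \Min(S),\; P \cap B = \mathfrak{p}\}
\]
for every $\mathfrak{p} \in \Min(B)$: existence of some $P$ achieving the maximum is exactly the content of Lemma \ref{lifting chaing from B to S}, and the opposite inequality comes from contracting a saturated chain out of any such $P$, noting that every strictly larger prime of $S$ has positive height so its contraction is strictly larger than $\mathfrak{p}$.

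The third step chains these identities together with the coheight-preserving hypothesis on $\psi$ and Lemma \ref{dimension of gluing poset}, yielding
\[
\dim(B/\phi(y_0)) = \max\{\dim(S/\psi(z)) : g(z) = y_0\} = \max\{\dim(z^{\uparrow}_{Z}) : g(z) = y_0\} = \dim((y_0)^{\uparrow}_{Y}),
\]
where Lemma \ref{dimension of gluing poset} is applied to the finite poset $Z$, which has a unique maximal element by hypothesis. The main obstacle I expect is not conceptual but bookkeeping: Remark \ref{remark about properties of height zero gluing} describes the partition used in Theorem \ref{gluing theorem} in three distinct cases ($C$ empty, $C = \min Z$, and $C$ a proper nonempty subset of $\min Z$), and one must verify that the fiber identification of Step 1 holds uniformly across all three. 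Once that is settled, the coheight-preserving property falls out immediately from the three cited lemmas.
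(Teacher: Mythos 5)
Your proposal is correct and follows essentially the same route as the paper: invoke Theorem \ref{height zero gluing theorem} for conditions (1)--(3), then combine Lemma \ref{lifting chaing from B to S}, the commutative diagram $f \circ \psi = \phi \circ g$, the coheight-preserving hypothesis on $\psi$, and Lemma \ref{dimension of gluing poset} to verify the coheight clause. The paper organizes the final verification as two inequalities for a fixed $\mathfrak{p}_0 \in \Min(B)$ rather than as your chain of max-identities over the fiber, but the ingredients and the substance are identical.
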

\begin{proof}
 By Theorem \ref{height zero gluing theorem} and Remark $\ref{remark about properties of height zero gluing}$, there exists a reduced local ring $B \subseteq S$ obtained from Theorem \ref{gluing theorem} that satisfies conditions \ref{condition 1 codimension preserving gluing thm}--\ref{condition 3 codimension preserving gluing thm} and satisfies the property that if $S$ is quasi-excellent, then so is $B$. Moreover, there is a saturated embedding $\phi : Y \longrightarrow \spec(B)$ such that the following diagram commutes:
 \begin{equation}\label{commutative diagram height zero gluing}
\begin{tikzcd}
	{{\spec(S)}} && {{\spec(B)}} \\
	\\
	Z && Y
	\arrow["f", from=1-1, to=1-3]
	\arrow["\psi", from=3-1, to=1-1]
	\arrow["g"', from=3-1, to=3-3]
	\arrow["\phi"', from=3-3, to=1-3]
\end{tikzcd}
 \end{equation}
Here, $g$ is the gluing map from $Z$ to $Y$, and the map $f$, defined by $f(P)=P \cap B$, is surjective. It remains to show that $\phi$ is coheight-preserving along $\Min (B)$.

 Fix $\mathfrak{p}_0 \in \Min(B)$ and suppose that $\dim(B/\mathfrak{p}_0)=n$. By Lemma $\ref{lifting chaing from B to S}$, there exists $P_0 \in \Min(S)$ with $\dim(S/P_0) = n$ and $P_0 \cap B=\mathfrak{p}_0$. Since $\psi$ is coheight-preserving along $\Min(S)$, there exists $z_0 \in Z$ with $\psi(z_0)=P_0$ and $\dim(z{_0}_Z^{\uparrow}) = \dim(S/P_0) = n$. By Lemma \ref{MinsgotoMins}, $\psi(\min Z) = \Min(S)$. Since $\psi$ is injective, $z_0$ is a minimal element of $Z$. Define $y_0 = g(z_0)$. We claim that $\phi(y_0) = \mathfrak{p}_0$ and $\dim(y{_0}_Y^{\uparrow}) = \dim(B/\mathfrak{p}_0)$. Suppose $\dim(y{_0}_Y^{\uparrow}) = m$. Because $g$ is surjective, there is a saturated chain in $Y$ of length $m$ given by
 \begin{equation}\label{saturated chain in Y}
   g(z_0)<_c g(x_1)<_c\dots<_cg(z_m)=g(z)
 \end{equation}
 where $z$ is the unique maximal node of $Z$.  By commutativity of diagram (\ref{commutative diagram height zero gluing}), we have $\phi(g(z_0))=f(\psi(z_0))=f(P_0)=P_0 \cap B=\mathfrak{p}_0$, and so $\phi(y_0) = \mathfrak{p}_0$. Since $\phi$ is a saturated embedding, the image of chain ($\ref{saturated chain in Y}$) in $\spec(B)$ is a saturated chain of length $m$ starting at $\mathfrak{p}_0$. Thus, $\dim(B/\mathfrak{p}_0) = n \geq m$. On the other hand, by Lemma $\ref{dimension of gluing poset}$, $\dim(g(z{_0})_Y^{\uparrow})=\text{max}\{\dim(z_Z^{\uparrow})\mid g(z)=g(z_0)\} \geq \dim(z{_0}_Z^{\uparrow})=n$. Thus, $m \geq n$, implying that $m = n$. Hence, $\phi(y_0)=\mathfrak{p}_0$ and $\dim(B/\mathfrak{p}_0) = \dim(y{_0}_Y^{\uparrow})$ as claimed. It follows that $\phi$ is coheight-preserving along $\Min (B)$.
 \end{proof}

\begin{lemma}\label{retraction lemma for quasi-excellent ring}
    Let $X$ and $Z$ be finite posets, and suppose $X$ is a retraction of $Z.$ Let $B$ be a reduced local ring containing the rationals with $|B| = |B/M| = c$. Suppose $\psi : X \longrightarrow \spec(B)$ is a coheight-preserving saturated embedding along $\Min(B)$. Then there exists a local ring S such that
     \begin{enumerate}
     \item\label{condition 1 in retraction codimension preserving}  $S$ is reduced,
     \item $S$ contains the rationals,
     \item\label{condition 3 in retraction codimension preserving} if $N$ is the maximal ideal of $S$, then $|S| = |S/N| = c$, and
     \item there is a coheight-preserving saturated embedding $\phi : Z \longrightarrow \spec (S)$ along $\Min(S)$.   
 \end{enumerate}
Moreover, if $B$ is quasi-excellent, then so is $S$.
\end{lemma}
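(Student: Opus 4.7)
The plan is to apply Theorem~\ref{retraction and ring} and then upgrade its conclusion to the coheight-preserving statement by direct verification. Because $\psi$ is coheight-preserving along $\Min(B)$, Lemma~\ref{MinsgotoMins} gives $\psi(\min X) = \Min(B)$, so the hypotheses of Theorem~\ref{retraction and ring} are satisfied. Invoking it produces a local ring $S$ and a saturated embedding $\phi : Z \longrightarrow \spec(S)$ with $\phi(\min Z) = \Min(S)$ fulfilling conditions (\ref{condition 1 in retraction codimension preserving})--(\ref{condition 3 in retraction codimension preserving}) and the quasi-excellence clause. The only thing left is to show that $\phi$ is coheight-preserving along $\Min(S)$.

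I would use the explicit description from Remark~\ref{remark about retraction and poset}. Write $\min X = \{p, x_2, \ldots, x_n\}$ and $D_p = \{q_1, \ldots, q_m\}$, set $P_1 := \psi(p)$ and $P_i := \psi(x_i)$ for $i \geq 2$, and recall that $S = B[[y,z]]/J$, $P_i' = (P_i, y, z)$ for $i = 2, \ldots, n$, $Q_j = (P_1, y + \beta_j z)$ for distinct nonzero rationals $\beta_j$, and $\Min(S) = \{P_2'/J, \ldots, P_n'/J, Q_1/J, \ldots, Q_m/J\}$ with $\phi(x_i) = P_i'/J$ and $\phi(q_j) = Q_j/J$. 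Since $\min Z = \min X \cup D_p$, verifying coheight preservation along $\Min(S)$ splits into two families.

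For each $i \geq 2$, I would observe that $S/(P_i'/J) \cong B[[y, z]]/(P_i, y, z) \cong B/P_i$, so $\dim(S/(P_i'/J)) = \dim(B/P_i)$, which by the coheight-preserving property of $\psi$ and Remark~\ref{posets are isomorphic} equals $\dim(x_i^{\uparrow})$ computed in $Z$. For each $j$, the substitution $y \mapsto -\beta_j z$ (valid since $\beta_j \neq 0$) yields $S/(Q_j/J) \cong B[[y, z]]/(P_1, y + \beta_j z) \cong (B/P_1)[[z]]$, which has Krull dimension $\dim(B/P_1) + 1$. Because $p$ is simple in $Z$, it is the unique cover of $q_j$, so every saturated chain in $Z$ starting at $q_j$ has $p$ as its second element. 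Consequently $\dim(q_j^{\uparrow}) = 1 + \dim(p^{\uparrow})$ in $Z$, and by Remark~\ref{posets are isomorphic} and the coheight-preserving hypothesis on $\psi$ at $p$, this equals $1 + \dim(B/P_1) = \dim(S/(Q_j/J))$.

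No step here is a real obstacle; the lemma is a bookkeeping enhancement of Theorem~\ref{retraction and ring}. The conceptual point is that the growing construction from Theorem~\ref{the growing theorem} is engineered so that each $Q_j$ adds exactly one dimension on top of the coheight of $P_1$ in $B$, which matches perfectly the fact that, in the retraction, the simple node $p$ is the unique cover of each $q_j$ in $Z$, forcing $\dim(q_j^{\uparrow}) = \dim(p^{\uparrow}) + 1$. Hence the retraction construction from \cite{Colbert} automatically respects coheights whenever the input embedding does.
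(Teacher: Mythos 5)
Your proposal is correct and follows essentially the same route as the paper: apply Theorem~\ref{retraction and ring}, then verify coheight preservation separately on the two families of minimal primes using the explicit description in Remark~\ref{remark about retraction and poset}, with your direct observation that $\dim(q_j^{\uparrow}) = \dim(p^{\uparrow}) + 1$ (from $p$ being the unique cover of $q_j$) being exactly what the paper extracts from Lemma~\ref{retraction and node naturally poset isomorphic}. (One cosmetic slip: $\min Z = (\min X \setminus \{p\}) \cup D_p$ rather than $\min X \cup D_p$, since $p$ has height one in $Z$; this does not affect your verification, which correctly covers all of $\Min(S)$.)
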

\begin{proof}
   By Theorem $\ref{retraction and ring}$, there exists a local ring $S$ such that conditions $\ref{condition 1 in retraction codimension preserving}-\ref{condition 3 in retraction codimension preserving}$ are satisfied and a saturated embedding $\phi : Z \longrightarrow \Spec(S)$ such that $\phi(\min Z ) = \Min(S)$. Moreover, if $B$ is quasi-excellent, then so is $S$. It remains to show that $\phi$ is coheight-preserving along $\Min(S)$.

   Since $X$ is a retraction of $Z$, by Definition $\ref{retraction definition}$, $X = Z \setminus D_p$, where $p$ is a simple node in $Z$. If we let $\min X = \{p, x_2, \dots, x_n\}$ and $D_p = \{q_1, \dots, q_m\}$, we can write $\Min(B) = \{\psi(p) := P_1, \psi(x_2) := P_2, \dots, \psi(x_n) := P_n\}$ as $\psi(\min X) = \Min(B)$ by Lemma \ref{MinsgotoMins}. By Remark $\ref{remark about retraction and poset}$, $S = B[[t_1,t_2]]/J$ for some ideal $J$ and $\Min(S) = \{Q_1/J, \dots, Q_m/J, P_2'/J, \dots, P_n'/J\}$, where $P_i' = (P_i, t_1, t_2)$ for each $i = 2, \dots, n$ and $Q_j=(P_1,t_1+\beta_j t_2)$ for each $j=1,\dots, m$ where $\beta_1, \ldots ,\beta_m$ are distinct nonzero rationals. Moreover, the saturated embedding $\phi: Z \longrightarrow \spec(S)$ is explicitly defined by:
   \[
\phi(x) = \begin{cases} 
       (\psi(x), t_1, t_2)/J & x \in X, \\
       Q_i/J & x=q_i \in D_p.
   \end{cases}
\]
Let $P \in \Min(S)$. Our goal is to find an element $z \in Z$ such that $\phi(z) = P$ and $\dim(S/P) = \dim(z_Z^{\uparrow})$. First, assume $P = P_i'/J$ for some $i = 2, \dots, n$. In this case, we have 
$$S/P = (B[[t_1, t_2]]/J)/((P_i, t_1, t_2)/J) \cong B/P_i,$$ 
and so $\dim(S/P) = \dim(B/P_i).$ By assumption, the map $\psi$ is coheight-preserving along $\Min(B)$. So, there exists $x \in X$ with $\psi(x) = P_i$ and $\dim(B/P_i) = \dim(x_X^{\uparrow})$. Because $\psi$ is a saturated embedding, it is injective and so $\psi(x) = P_i = \psi(x_i)$ implies $x = x_i$. Since $x_i \in X$, Remark \ref{posets are isomorphic} ensures that $x{_i}_X^{\uparrow}$ is poset-isomorphic to $x{_i}_Z^{\uparrow}$. Setting $z = x_i$, we obtain $\phi(z) = \phi(x_i) = (\psi(x_i), t_1, t_2)/J = P_i'/J = P$ and $$\dim(S/P) = \dim(B/P_i) = \dim(x{_i}_X^{\uparrow}) = \dim(x{_i}_Z^{\uparrow})=\dim(z_Z^{\uparrow}).$$

Now suppose $P = Q_j/J$ for some $j = 1, \dots, m$. We proceed similarly. Since 
$$S/P=(B[[t_1, t_2]]/J)/((P_1,t_1+\beta_j t_2)/J) \cong B/P_1[[t_1]]$$
we see that $\dim(S/P)= \dim(B/P_1) + 1.$ The coheight-preserving property of $\psi$ and its injectivity on minimal nodes of $X$ imply that $\dim(B/P_1) = \dim(p_X^{\uparrow})$. By choosing $z = q_j$, we see that $\phi(z) = \phi(q_j) = Q_j/J = P$. The key observation is that $p_X^{\uparrow}$ is a retraction of $q{_j}_Z^{\uparrow}$ as $p_X^{\uparrow}=q{_j}_Z^{\uparrow} \setminus \{q_j\}$. Thus, we can use Lemma $\ref{retraction and node naturally poset isomorphic}$ to deduce that $\dim(q{_j}_Z^{\uparrow})=\dim(p_X^{\uparrow})+1$. Therefore,
\[
\dim(S/P) = \dim(B/P_1) + 1 = \dim(p_X^{\uparrow}) + 1 = \dim(q{_j}_Z^{\uparrow})=\dim(z_Z^{\uparrow}).
\] It follows that $\phi$ is coheight-preserving along $\Min(S)$.
\end{proof}

We now have the tools needed to prove the main result of this section.
\begin{theorem}\label{local quasi-excellent ring codimension preserving theorem}
       Let $Y$ be a finite poset with a unique maximal node. Then, there exists a reduced local ring $(S,M)$ and a saturated embedding $\Psi : Y \longrightarrow \spec(S)$ such that:
      \begin{enumerate}
        \item\label{condition 1 B contain sQ} $S$ contains $\Q,$
        \item $|S| = |S/M| = c$,
        \item $S$ is quasi-excellent, and
%        \item $\Psi(\min Y)=\Min(S)$ and,
        \item\label{condition 5 phi is condimension preserving} $\Psi$ is coheight-preserving along $\Min(S)$. 
       \end{enumerate}
\end{theorem}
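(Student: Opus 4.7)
The plan is to mirror the inductive scheme used in the proof of Theorem \ref{theorem 6.5 in colbert}, but now invoking the strengthened gluing and retraction lemmas (Lemma \ref{codimension preserving gluing thm} and Lemma \ref{retraction lemma for quasi-excellent ring}) at each step so that the coheight-preserving property is maintained throughout. Because $Y$ has a unique maximal node, Theorem \ref{every finite poset reduced to a point} supplies a reduction sequence $(X_1, X_2, \ldots, X_n = Y)$ in which $X_1$ is a single point, and each $X_i$ is either a height zero splitting of $X_{i+1}$, a retraction of $X_{i+1}$, or equal to $X_{i+1}$. I will build a corresponding sequence of rings $B_1, B_2, \ldots, B_n$ with coheight-preserving saturated embeddings $\Psi_i : X_i \longrightarrow \spec(B_i)$ along $\Min(B_i)$ and set $S := B_n$ and $\Psi := \Psi_n$.

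For the base case, take $B_1$ to be any field of cardinality $c$ containing $\Q$ (for instance $\R$, or $\Q$ adjoin $c$-many transcendentals). Then $B_1$ is a reduced quasi-excellent local ring with $|B_1|=|B_1/(0)|=c$, and $\Min(B_1)=\{(0)\}$. The map sending the unique point of $X_1$ to $(0)$ is trivially a coheight-preserving saturated embedding along $\Min(B_1)$, since both sides have dimension zero.

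For the inductive step, suppose $(B_i, M_i)$ and $\Psi_i : X_i \longrightarrow \spec(B_i)$ satisfy all the desired conclusions. If $X_i = X_{i+1}$, take $B_{i+1}=B_i$ and $\Psi_{i+1}=\Psi_i$. If $X_i$ is a height zero splitting of $X_{i+1}$, then $X_{i+1}$ is a height zero gluing of $X_i$, so apply Lemma \ref{codimension preserving gluing thm} with $Z=X_i$, $Y=X_{i+1}$, and $(S,M)=(B_i,M_i)$ to produce $B_{i+1}\subseteq B_i$ and $\Psi_{i+1}$. If $X_i$ is a retraction of $X_{i+1}$, apply Lemma \ref{retraction lemma for quasi-excellent ring} with $X=X_i$, $Z=X_{i+1}$, and $B=B_i$ to produce $B_{i+1}$ and $\Psi_{i+1}$. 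In either case, the conclusions of the invoked lemma directly give that $B_{i+1}$ is reduced, contains $\Q$, has cardinality $c$ with an uncountable residue field of cardinality $c$, is quasi-excellent, and that $\Psi_{i+1}$ is a coheight-preserving saturated embedding along $\Min(B_{i+1})$.

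The one technical point to verify is that the hypotheses of Lemma \ref{codimension preserving gluing thm} are satisfied at every step, in particular that each $X_i$ appearing as the input $Z$ has a unique maximal element. This holds by downward induction along the reduction sequence: $X_n = Y$ has a unique maximal node by assumption, and both height zero gluings (which only identify minimal nodes) and retractions (which only delete minimal nodes that are covered solely by one simple height-one node) leave the set of maximal elements unchanged, so every $X_i$ inherits a unique maximal node. Since the other ring-theoretic hypotheses (reducedness, containing $\Q$, cardinality, quasi-excellence, uncountable residue field) are precisely the conclusions carried through each lemma, the induction runs without obstruction. The main work has already been done in proving Lemmas \ref{codimension preserving gluing thm} and \ref{retraction lemma for quasi-excellent ring}; the proof of Theorem \ref{local quasi-excellent ring codimension preserving theorem} is then just this bookkeeping via the reduction sequence.
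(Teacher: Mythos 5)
Your proposal is correct and follows essentially the same route as the paper: take the reduction sequence from Theorem \ref{every finite poset reduced to a point}, start from a field of cardinality $c$, and push the coheight-preserving property up the sequence via Lemma \ref{codimension preserving gluing thm} and Lemma \ref{retraction lemma for quasi-excellent ring}. Your explicit check that every intermediate poset retains a unique maximal node (needed as a hypothesis of Lemma \ref{codimension preserving gluing thm}) is a point the paper leaves implicit, and is a welcome addition.
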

\begin{proof}
    The proof closely follows the proof of Theorem $\ref{theorem 6.5 in colbert}$. By Theorem $\ref{every finite poset reduced to a point}$, the poset $Y$ reduces to a point. Hence, there exists a reduction sequence $(Y_1,\dots, Y_n = Y)$, where $Y_1$ is a point and each step is either a height zero splitting, a retraction, or simply the identity.

Let $S_1 = \C$, and define $\Psi_1: Y_1 \to \spec(S_1)$ as the poset map that sends the unique element of $Y_1$ to the zero ideal of $S_1$. For $1 \leq i < n$, assume that there is a reduced local ring $(S_i, M_i)$ and a saturated embedding $\Psi_i : Y_i \longrightarrow \Spec(S_i)$  satisfying conditions $\ref{condition 1 B contain sQ}$--$\ref{condition 5 phi is condimension preserving}$ (note that these properties hold for $S_1$ and $\Psi_1$).

We now construct a reduced local ring $(S_{i+1}, M_{i+1})$ and a saturated embedding $\Psi_{i + 1} : Y_{i + 1} \longrightarrow \Spec(S_{i + 1})$ with the same properties. If $Y_{i+1} = Y_i$, set $S_{i+1} = S_i$ and $\Psi_{i+1} = \Psi_i$. If $Y_i$ is a height zero splitting of $Y_{i+1}$—that is, if $Y_{i+1}$ is a height zero gluing of $Y_i$—apply Lemma $\ref{codimension preserving gluing thm}$ with $Y = Y_{i+1}$, $Z = Y_i$, $S = S_i$, and $\psi = \Psi_i$ to obtain $(S_{i+1}, M_{i+1})$ and a saturated embedding $\Psi_{i+1}$ that satisfy the four conditions in Lemma \ref{codimension preserving gluing thm}. Now, $|S_{i + 1}| = |S_{i + 1}/M_{i + 1}| = |\widehat{S_{i + 1}}/M_{i + 1}\widehat{S_{i+ 1}}| = |\widehat{S_{i }}/M_{i }\widehat{S_{i}}| = |S_i/M_i| = c$, and so conditions $\ref{condition 1 B contain sQ}$--$\ref{condition 5 phi is condimension preserving}$ hold.
In the case where $Y_i$ is a retraction of $Y_{i+1}$, use Lemma $\ref{retraction lemma for quasi-excellent ring}$ with $X = Y_i$, $Z = Y_{i+1}$, $B = S_i$, and $\psi = \Psi_i$. This yields a reduced local ring $(S_{i+1}, M_{i+1})$ and a saturated embedding $\Psi_{i+1}$ satisfying conditions $\ref{condition 1 B contain sQ}$--$\ref{condition 5 phi is condimension preserving}$.

Finally, by setting $S := S_n$ and $\Psi := \Psi_n$, we conclude that $\Psi: Y \to \spec(S)$ is a saturated embedding satisfying conditions $\ref{condition 1 B contain sQ}$--$\ref{condition 5 phi is condimension preserving}$.\end{proof}

We end this section with two consequences of Theorem \ref{local quasi-excellent ring codimension preserving theorem}.
\begin{corollary}\label{codimension implies dimension}
    Let $Y$ and $S$ be as in Theorem $\ref{local quasi-excellent ring codimension preserving theorem}$. Then, $\dim(Y)=\dim(S)$.
\end{corollary}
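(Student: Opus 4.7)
The plan is to apply Remark \ref{remark about dimension and codimension preserving} directly. Theorem \ref{local quasi-excellent ring codimension preserving theorem} furnishes a saturated embedding $\Psi : Y \longrightarrow \spec(S)$ that is coheight-preserving along $\Min(S)$, and since $S$ is a local ring it is in particular Noetherian, so the hypotheses of the remark are satisfied with $R = S$, $X = Y$, and $\phi = \Psi$. The remark then concludes that $\Psi$ is dimension-preserving, which is precisely the desired equality $\dim(Y) = \dim(S)$.

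If one wishes to avoid quoting the remark and instead spell the argument out, the proof proceeds as follows. First, Lemma \ref{MinsgotoMins} gives $\Psi(\min Y) = \Min(S)$, so one may write $\min Y = \{y_1,\dots,y_n\}$ and set $P_i := \Psi(y_i)$, yielding $\Min(S) = \{P_1,\dots,P_n\}$. The coheight-preserving hypothesis then says $\dim(S/P_i) = \dim(y{_i}_Y^{\uparrow})$ for every $i$. Finally, use the two standard facts that $\dim(S) = \max_i \dim(S/P_i)$ (since the Krull dimension of a Noetherian ring is the maximum coheight over its minimal primes) and $\dim(Y) = \max_i \dim(y{_i}_Y^{\uparrow})$ (since every finite chain of a finite poset $Y$ sits inside $y^{\uparrow}_Y$ for some minimal node $y$). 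Comparing the two maxima term-by-term gives $\dim(Y) = \dim(S)$.

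There is no substantive obstacle here: the coheight-preserving enhancement built into Theorem \ref{local quasi-excellent ring codimension preserving theorem} is precisely what makes the dimension equality free, so the corollary is essentially a bookkeeping statement recording a feature of the construction that was proved in the previous theorem.
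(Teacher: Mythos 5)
Your proof is correct and matches the paper exactly: the paper's proof of this corollary is a one-line appeal to Remark \ref{remark about dimension and codimension preserving}, which is your first paragraph. The expanded version you give is just the content of that remark spelled out, so nothing further is needed.
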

\begin{proof}
    The result follows from Remark $\ref{remark about dimension and codimension preserving}$.\end{proof}

\begin{corollary}\label{corollary quasi-excellent 2}
    Let $Y$ be a finite poset. Then, there exists a reduced Noetherian ring $S$ and a saturated embedding $\Psi : Y \longrightarrow \spec(S)$ such that:
      \begin{enumerate}
        \item $S$ contains $\Q,$
        \item $S$ is quasi-excellent, and
%        \item $\Psi(\min Y)=\Min(S)$ and,
        \item $\Psi$ is coheight-preserving along $\Min(S)$. 
       \end{enumerate}
\end{corollary}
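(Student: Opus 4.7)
The plan is to reduce to Theorem \ref{local quasi-excellent ring codimension preserving theorem} by formally adjoining a maximum element to $Y$ and then localizing the resulting ring to undo this adjunction at the ring level. Let $\{y_1,\dots,y_k\}$ be the set of maximal elements of $Y$ and set $Y^+ := Y \cup \{\infty\}$, where $\infty$ is a new element declared strictly larger than every element of $Y$. Then $Y^+$ has a unique maximal node and $\min Y^+ = \min Y$. Moreover, every maximal saturated chain in $Y^+$ from a minimal node $z$ must pass through some $y_i$ before reaching $\infty$, giving $\dim(z^{\uparrow}_{Y^+}) = \dim(z^{\uparrow}_Y) + 1$.

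Apply Theorem \ref{local quasi-excellent ring codimension preserving theorem} to $Y^+$ to obtain a reduced quasi-excellent local ring $(T,M)$ containing $\Q$ with $|T| = |T/M| = c$, together with a saturated embedding $\Psi^+ : Y^+ \longrightarrow \spec(T)$ that is coheight-preserving along $\Min(T)$. Set $W := T \setminus \bigcup_{i=1}^k \Psi^+(y_i)$, a multiplicative subset by prime avoidance, and define $S := W^{-1}T$. Then $S$ is reduced Noetherian, contains $\Q$, and is quasi-excellent (localizations of quasi-excellent rings are quasi-excellent). For each $y \in Y$ we have $y \leq y_i$ for some $i$, so $\Psi^+(y) \subseteq \Psi^+(y_i)$ and $\Psi^+(y) \cap W = \emptyset$; thus $\Psi(y) := \Psi^+(y) S$ defines a map $\Psi : Y \longrightarrow \spec(S)$, whose image lies in the set of primes of $S$ contained in the maximal ideals $\Psi^+(y_1)S,\dots,\Psi^+(y_k)S$.

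Contraction and extension give an inclusion-preserving bijection between primes of $T$ disjoint from $W$ and primes of $S$; since any prime of $T$ lying strictly between two primes contained in some $\Psi^+(y_i)$ is itself contained in $\Psi^+(y_i)$, this bijection preserves the cover relation in the relevant range, so $\Psi$ inherits the saturated-embedding property from $\Psi^+$. By Lemma \ref{MinsgotoMins} applied to $\Psi^+$ we have $\Psi^+(\min Y) = \Min(T)$, and each such minimal prime is disjoint from $W$, so $\Min(S) = \{\Psi(z) : z \in \min Y\}$. For $z \in \min Y$, $\dim(S/\Psi(z))$ equals the length of the longest saturated chain in $\spec(T)$ from $\Psi^+(z)$ to some $\Psi^+(y_i)$. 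Any such chain extends (via the cover $\Psi^+(y_i) \lessdot \Psi^+(\infty)$ coming from $y_i \lessdot \infty$ in $Y^+$) to a saturated chain in $\spec(T)$ from $\Psi^+(z)$ to $\Psi^+(\infty)$ of length one greater, and the latter is bounded by $\coht_T(\Psi^+(z)) = \dim(z^{\uparrow}_{Y^+}) = \dim(z^{\uparrow}_Y) + 1$. Conversely, applying $\Psi^+$ to a longest saturated chain in $Y$ from $z$ to some $y_i$ realizes the bound $\dim(z^{\uparrow}_Y)$, so equality holds and $\Psi$ is coheight-preserving along $\Min(S)$.

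The main obstacle, and the step requiring the most care, is confirming that the localization tracks the coheight data correctly: one must verify that saturated chains of primes of $S$ ending at a maximal ideal correspond precisely to saturated chains of primes of $T$ from $\Psi^+(z)$ to some $\Psi^+(y_i)$, and that the ``$+1$'' shift between $\dim(z^{\uparrow}_{Y^+})$ and $\dim(z^{\uparrow}_Y)$ is absorbed exactly by removing the cover $\Psi^+(y_i) \lessdot \Psi^+(\infty)$ via the localization. The cover-preservation argument for primes strictly between two elements of the image (which stays inside the localized region) is what makes this clean.
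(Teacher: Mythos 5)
Your proposal is correct and follows essentially the same route as the paper's proof: adjoin a new top element to $Y$, apply Theorem \ref{local quasi-excellent ring codimension preserving theorem} to the resulting poset, and then localize at the complement of the union of the images of the maximal elements of $Y$, checking that the coheight of each minimal prime drops by exactly one. The chain-extension/truncation bookkeeping you describe for the coheight verification matches the paper's argument.
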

\begin{proof}
We define a new poset $Y' = Y \cup \{z\}$ where the order is given by $y <_{Y'} z$ for $y \in Y$ and $x \leq_{Y'} y$ if and only if $x \leq_Y y$ for $x,y \in Y$. 
%Adjoin to \(Y\) a new greatest element \(\mathbf y\) and equip \(Y\cup\{\mathbf y\}\) with the order extending the original one on \(Y\) and declaring every element of \(Y\) to lie below \(\mathbf y\).  
Then $Y'$ has a unique maximal node, and so by Theorem \ref{local quasi-excellent ring codimension preserving theorem} there exists a reduced quasi‑excellent local ring \((S',M')\) containing the rationals and a coheight‑preserving saturated embedding $\Psi': Y' \longrightarrow \Spec(S')$ along $\Min(S')$.
%along $\Min(Y\cup\{\mathbf y\})$
%\[
%  \Psi_{\mathbf y}\colon Y\cup\{\mathbf y\}\longrightarrow \spec(S_{\mathbf y}),
%\]
%carrying all minimal elements of \(Y\cup\{\mathbf y\}\) bijectively onto the minimal primes of \(S_{\mathbf y}\).  
Let \(y_1,\dots,y_n\) be the maximal elements of \(Y\). 
%then each prime ideal \(\Psi'(y_i)\) is strictly contained in the unique maximal ideal \(M'=\Psi'(z)\). 
If $y \in Y'$ and $y <_c z$, then $y = y_i$ for some $i = 1,2, \ldots ,n$.  Since $\Psi'$ is a saturated embedding, $\Psi'(y_i) \subsetneq \Psi'(z) = M'$ is saturated for every $i = 1,2, \ldots ,n$.

Set \(\mathfrak S = S'\setminus\bigl(\Psi'(y_1)\cup\cdots\cup\Psi'(y_n)\bigr)\) and let \(S=\mathfrak S^{-1}S'\).  Then $S$ is reduced, quasi-excellent, and contains $\mathbb{Q}$.
%Localization preserves reducedness, quasi‑excellence and containment of \(\Q\).
Moreover, there is an inclusion preserving one-to-one correspondence between prime ideals of \(S\) and prime ideals of \(S'\) contained in \(\Psi'(y_i)\) for some $i = 1,2, \ldots ,n$. Thus, \(\Psi\colon Y\to \spec(S)\) defined by $\Psi(y)=\Psi'(y)S$ is a
saturated embedding. Further, since all minimal prime ideals of $S'$ are contained in $\Psi'(y_i)$ for some $i$, $S$ and $S'$ have the same number of minimal prime ideals. 
%It follows that \(\Psi(\min Y)=\Min(S)\).

Let $P \in \Min(S)$, and let $P'$ be the corresponding minimal prime ideal of $S'$. Since $\Psi'$ is coheight preserving along $\Min(S')$, there is an element $x$ of $Y'$ such that $\Psi'(x) = P'$ and $\dim(S'/P')=\dim(x_{Y'}^{\uparrow})$. 
%Since $x$ is a minimal element of $Y',$ it is also a minimal element of $Y$.  
Observe that $\Psi(x) = \Psi'(x)S = P'S = P$. Now, $\dim(x_{Y'}^{\uparrow}) = \dim(x_{Y}^{\uparrow}) + 1$, and $\dim(S/P) \leq \dim(S'/P') - 1.$ Let $\dim(x^{\uparrow}_{Y'}) = \dim(S'/P') = m$. Then there is a chain in $Y'$ of the form
$$x = x_0 <_c x_1 <_c \cdots <_c x_{m - 1} <_c x_m = z.$$ Now, $x_{m - 1} = y_i$ for some $i = 1,2, \ldots ,n$. Since $\Psi'$ is a saturated embedding, we have a saturated chain of prime ideals
$$P' = \Psi'(x) \subsetneq \Psi'(x_1) \subseteq \cdots \subsetneq \Psi'(y_i) \subsetneq M'$$ of length $m$ in $S'$, and so we have a corresponding saturated chain of prime ideals
$$P = \Psi'(x)S \subsetneq \Psi'(x_1)S \subsetneq \cdots \subsetneq \Psi'(y_i)S$$ in $S$ of length $m - 1$. It follows that $\dim(S/P) \geq m - 1$, and so we have $\dim(S/P) = m - 1$. Thus,
$$\dim(S/P) = \dim(S'/P') - 1 = \dim(x^{\uparrow}_{Y'}) - 1 = \dim(x^{\uparrow}_Y).$$
%Then there is a saturated chain of prime ideals of $S'$ of the form 
%$$P = P_0 \subsetneq P_1 \subsetneq \cdots \subsetneq P_m = \Psi'(y_i)S,$$ which corresponds to a saturated chain of prime ideals of $S'$ of the form 
%$$P' = P'_0 \subsetneq P'_1 \subsetneq \cdots \subsetneq P'_m = \Psi'(z) = M'.$$ 
Therefore, $\Psi$ is coheight-preserving along $\Min(S)$.
\end{proof}

% \begin{proof}
%  Consider the set $Y_{\mathbf{y}}:=Y \cup \{\mathbf{y}\}$ and impose the following order on $Y_{\mathbf{y}}$: if $x,x' \in Y$, then $x \leq_{Y_{\mathbf{y}}} x'$ if and only if $x \leq_Y x'$. Otherwise, $x \leq_{Y_{\mathbf{y}}} \mathbf{y}$ for every $x \in Y$. It is straightforward to show that ${Y_{\mathbf{y}}} $ forms a finite partially ordered set with a unique maximal node $\mathbf{y}$. By Theorem $\ref{local quasi-excellent ring codimension preserving theorem}$, there exists a reduced local quasi-excellent ring $(S_\mathbf{y}, M_\mathbf{y})$ containing the rationals. Moreover, there exists a codimension-preserving saturated embedding along $\Min(Y)$, $\Psi_{\mathbf{y}}: Y_{\mathbf{y}} \longrightarrow \spec(S_\mathbf{y})$, which satisfies $\Psi(\Min(Y_{\mathbf{y}}))=\Min(S_{\mathbf{y}})$.

%  Let $y_1,\dots,y_n$ be the maximal nodes of $Y$ and consider the multiplicatively closed set $\mathfrak{S}=S_\mathbf{y} \setminus \{ \bigcup_{i=1}^n \Psi(y_i)\}$. We show that the localized ring at $\mathfrak{S}$, $S:=\mathfrak{S}^{-1}S_\mathbf{y}$ satisfies all desired conditions. Clearly, $S$ contains $\Q$ and it is quasi-excellent. By Lemma $\ref{poset isomorphism from saturated embedding}$, $Y_\mathbf{y}$ is poset isomorphic to $\Psi(Y_y)$ and $\Psi(Y_y)$ is a saturated subset of $\spec(S_\mathbf{y})$. 
% \end{proof}

\section{Local Domain Construction}\label{local domain construction}
Theorem~$\ref{local quasi-excellent ring codimension preserving theorem}$ is the first milestone in proving the existence of the local UFD in our main theorem (Theorem $\ref{main theorem of characterization of embeddings of UFDs}$). Given a finite poset $X$ with one maximal element and with $\dim(X) \geq 2$, we define $X^{n-1}$ to be the subposet of $X$ consisting of all nodes of $X$ of height at least one. By applying Theorem $\ref{local quasi-excellent ring codimension preserving theorem}$ to $X^{n-1}$, we obtain a reduced local quasi-excellent ring $B'$ and a saturated embedding $\phi': X^{n-1} \to \operatorname{Spec}(B')$. Furthermore, Corollary~$\ref{codimension implies dimension}$ guarantees $\phi'$ is dimension-preserving.

In this section, we construct a local domain $B$ from $B'$ and a dimension-preserving saturated embedding $\psi: X \longrightarrow \operatorname{Spec}(B)$. The key property of $B$ is that for carefully chosen prime ideals $Q$ of $B$, $Q \not\subseteq \mathfrak{q}$ for every $\mathfrak{q} \in \ass(B/bB)$ and every regular element $b \in B$. This ``avoiding" condition on prime ideals of $B$ plays an essential role for our strategy to build a local UFD $A \subseteq B$. In particular, if a prime ideal of $B$ satisfies this condition, then we are able to construct $A$ so that it contains a generating set for that prime ideal.

Our construction of $B$ ensures that whenever $x \in X$ has height at least two, the prime ideal $\psi(x)$ of $B$ satisfies the avoiding condition, guaranteeing that we can construct $A$ to include generators of all of the prime ideals corresponding to the nodes of $X$ of height two or more. Having handled these nodes, we dedicate Section~$\ref{ufd construction}$ to carefully adjoining generators corresponding to the remaining nodes of height one and showing that if $f: \spec(B) \longrightarrow \spec(A)$ is the map given by $f(P) = A \cap P$, then the map $\phi=f \circ \psi : X \longrightarrow \spec (A)$ is a saturated embedding that preserves dimension.

We begin with two auxiliary results that will be needed in the proof of the main result of this section, Theorem $\ref{local domain with desired properties}$.
\begin{lemma}\label{saturated embedding preserved by adjoining a variable}
    Let $B$ be a Noetherian ring, let $z$ be an indeterminate, let $A = B[[z]]$, and let $X$ be a poset. Suppose $\phi : X \longrightarrow \spec(B)$ is a saturated embedding. Then $\psi : X \longrightarrow \spec(A)$ given by $\psi(u) = (\phi(u),z)A$ is a saturated embedding.
\end{lemma}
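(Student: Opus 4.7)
The plan is to verify directly the definition of a saturated embedding: that $\psi$ is a well-defined poset embedding sending covering relations to covering relations. The structural fact driving every step is that since $A = B[[z]]$ satisfies $A/zA \cong B$, the prime ideals of $A$ containing $z$ are in inclusion-preserving bijection with the prime ideals of $B$. A companion fact I would use repeatedly is the contraction identity $(P,z)A \cap B = P$ for any ideal $P$ of $B$, which follows by looking at constant terms.

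First I would check that $\psi$ is well-defined: for each $u \in X$, the quotient $A/(\phi(u),z)A \cong B/\phi(u)$ is a domain because $\phi(u)$ is prime, so $\psi(u) \in \spec(A)$. Next I would check the poset embedding property. If $u \leq v$ in $X$, then $\phi(u) \subseteq \phi(v)$ because $\phi$ is a poset map, so $\psi(u) = (\phi(u),z)A \subseteq (\phi(v),z)A = \psi(v)$. Conversely, if $\psi(u) \subseteq \psi(v)$, then contracting to $B$ and using the identity above gives $\phi(u) \subseteq \phi(v)$, hence $u \leq v$ since $\phi$ is a poset embedding. In particular, $\psi$ is injective, so $u \neq v$ yields $\psi(u) \neq \psi(v)$.

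Finally, I would handle the covering condition, which is the main content of the lemma. Suppose $v$ covers $u$ in $X$, so that $\phi(v)$ covers $\phi(u)$ in $\spec(B)$ because $\phi$ is saturated. The embedding property combined with $u \neq v$ gives $\psi(u) \subsetneq \psi(v)$. Now suppose $Q \in \spec(A)$ satisfies $\psi(u) \subseteq Q \subseteq \psi(v)$. Because $z \in \psi(u) \subseteq Q$, the ideal $Q$ contains $z$, so under the bijection between primes of $A$ containing $z$ and primes of $B$ we have $Q = (Q \cap B, z)A$. Contracting the chain $\psi(u) \subseteq Q \subseteq \psi(v)$ to $B$ yields $\phi(u) \subseteq Q \cap B \subseteq \phi(v)$, and since $\phi(v)$ covers $\phi(u)$, we must have $Q \cap B \in \{\phi(u), \phi(v)\}$, hence $Q \in \{\psi(u), \psi(v)\}$. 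This shows $\psi(v)$ covers $\psi(u)$.

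I do not anticipate a genuine obstacle: the lemma is a clean consequence of the $z$-adic quotient $A/zA \cong B$. The only subtlety worth emphasizing is that every prime ideal of $A$ lying between $\psi(u)$ and $\psi(v)$ automatically contains $z$, which reduces the covering question to one about primes of $B$ where we can invoke saturatedness of $\phi$ directly.
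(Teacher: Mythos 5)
Your proof is correct, and it supplies exactly the details that the paper defers to Lemma 6.1 of \cite{Colbert}: the well-definedness via $A/(\phi(u),z)A\cong B/\phi(u)$, the contraction identity $(P,z)A\cap B=P$, and the observation that any prime between $\psi(u)$ and $\psi(v)$ contains $z$ and hence corresponds to a prime of $B$ between $\phi(u)$ and $\phi(v)$. This is the standard argument and matches the approach of the cited lemma.
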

\begin{proof}
    The proof follows from the proof of Lemma 6.1 in \cite{Colbert}.
\end{proof}
\begin{lemma}\label{dimension preserving saturated embedding by adjoining variable}
     Let $B$ be a Noetherian ring, let $z$ be an indeterminate, let $A = B[[z]]$, and let $X$ be a finite poset with $\dim(X) \geq 1$. Suppose that $X$ has $m$ height zero nodes and $m$ height one nodes denoted by $\{u_1,\dots,u_m\}$ and $\{x_1,\dots,x_m\}$ respectively. Moreover, suppose that  $u_i <_c x_j$ if and only if $i=j$. Let $X^{n-1}$ be the subposet of $X$ consisting of all nodes of $X$ of height at least one, and suppose that $\phi : X^{n-1} \longrightarrow \spec(B)$ is a dimension-preserving saturated embedding such that $\phi(\min  X^{n-1}) = \Min(B)$. Then, there exists a dimension-preserving saturated embedding $\Psi : X \longrightarrow \spec (A)$ such that $\Psi(\min X) = \Min(A)$ and if $u \in X^{n-1}$, then $\Psi(u)=(\phi(u),z)$.
\end{lemma}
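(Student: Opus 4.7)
The plan is to extend the map $u \mapsto (\phi(u), z)A$ on $X^{n-1}$ (a saturated embedding by Lemma \ref{saturated embedding preserved by adjoining a variable}) to all of $X$ by setting $\Psi(u_i) := \phi(x_i)A$ for each $i = 1,\dots,m$. Each image is prime: $A/(\phi(u), z)A \cong B/\phi(u)$ is a domain for $u \in X^{n-1}$, and $A/\phi(x_i)A \cong (B/\phi(x_i))[[z]]$ is a domain since $\phi(x_i)$ is prime in $B$. Because the minimal primes of the power series ring $A = B[[z]]$ over the Noetherian ring $B$ are exactly the extensions of the minimal primes of $B$, the hypothesis $\phi(\min X^{n-1}) = \Min(B)$ yields $\Min(A) = \{\phi(x_i)A \mid 1 \leq i \leq m\} = \Psi(\min X)$, as required.

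For the saturated embedding property, the cases in which both endpoints lie in $X^{n-1}$ follow directly from Lemma \ref{saturated embedding preserved by adjoining a variable}, so only comparisons involving some $u_i$ remain. The unique-cover hypothesis $u_i <_c x_j \Leftrightarrow i = j$ implies that for $v \in X^{n-1}$, $u_i \leq v$ in $X$ holds if and only if $x_i \leq v$ in $X^{n-1}$; combined with the contractions $\phi(x_i)A \cap B = \phi(x_i)$ and $(\phi(v), z)A \cap B = \phi(v)$, this yields both directions of the embedding property between $u_i$ and $v \in X^{n-1}$. If $\Psi(u_i) \subseteq \Psi(u_j)$, contraction to $B$ gives $\phi(x_i) \subseteq \phi(x_j)$, and since both are minimal primes of $B$, they coincide and $i = j$. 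The relation $\Psi(u) \subseteq \Psi(u_j)$ cannot occur for $u \in X^{n-1}$, since $z \in \Psi(u)$ but $z \notin \phi(x_j)A = \Psi(u_j)$, and correspondingly $u \not\leq u_j$ in $X$ since $u$ has positive height. The only new cover to establish is $\Psi(u_i) <_c \Psi(x_i)$: passing modulo $\phi(x_i)A$ identifies $\Psi(x_i)/\phi(x_i)A$ with the ideal $(z)$ inside $(B/\phi(x_i))[[z]]$, which is a height-one prime covering $(0)$ in a power series ring over a domain.

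Finally, $\Psi$ is dimension-preserving: one checks that $\dim(X) = \dim(X^{n-1}) + 1$, since every maximal chain in $X$ must begin with a pair $u_i <_c x_i$ (by the unique-cover hypothesis) and then continue through $X^{n-1}$, and conversely any maximal chain in $X^{n-1}$ can be lifted by prepending $u_i$ to its minimal endpoint $x_i$; also $\dim(A) = \dim(B[[z]]) = \dim(B) + 1$, and $\dim(X^{n-1}) = \dim(B)$ by hypothesis. The only genuinely new technical input beyond Lemma \ref{saturated embedding preserved by adjoining a variable} is verifying the cover $\Psi(u_i) <_c \Psi(x_i)$, which reduces to the standard fact that $(z)$ is a height-one prime covering zero in a power series ring over a domain. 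The rest is routine bookkeeping that uses the bijection between $\Min(B)$ and $\Min(A)$ and the unique-cover pairing $u_i \leftrightarrow x_i$ to match height-zero nodes of $X$ with minimal primes of $A$ in the correct order-theoretic manner.
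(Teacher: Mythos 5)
Your proposal is correct and follows essentially the same route as the paper: define $\Psi(u_i)=\phi(x_i)A$ and $\Psi(u)=(\phi(u),z)A$ for $u\in X^{n-1}$, invoke Lemma \ref{saturated embedding preserved by adjoining a variable} for the positive-height part, check the embedding and covering conditions case by case using the unique-cover pairing $u_i\leftrightarrow x_i$ and the identification $\Min(A)=\{\phi(x_i)A\}$, and conclude with $\dim(X)=\dim(X^{n-1})+1$ and $\dim(A)=\dim(B)+1$. The only difference is cosmetic: you justify the cover $\Psi(u_i)<_c\Psi(x_i)$ by passing to $(B/\phi(x_i))[[z]]$, while the paper simply notes that $(\phi(x_i),z)A$ has height one.
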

\begin{proof}
    Let $\Min(B)=\{\phi(x_1):=P_1,\dots,\phi(x_m):=P_m\}$. By Lemma $\ref{saturated embedding preserved by adjoining a variable}$, the map $\psi: X^{n-1}\longrightarrow \spec (A)$ given by $\psi(u) = (\phi(u),z)A$ is a saturated embedding. Define $\Psi : X \longrightarrow \spec (A)$ as:
    $$u \mapsto \begin{cases}
        \psi(u) \ \text{ if } u \in X^{n-1} \\
        P_iA \ \text{ if } u=u_i \text{ for }i=1,\dots,m
    \end{cases}$$
    We claim that $\Psi$ has the desired properties. Note that since $\Min(A) = \{P_1A, \ldots ,P_mA\}$, we have $\Psi(\min X) = \Min(A)$. If $w_1,w_2 \in X$ with $w_1 \leq_X w_2$, then $\Psi(w_1) \subseteq \Psi(w_2)$. Now, suppose that $\Psi(w_1) \subseteq \Psi(w_2)$. If $w_1$ is not a height zero node in $X$, then $\Psi(w_1)=\psi(w_1)$ and so $\Psi(w_2)$ is not a minimal prime ideal of $A$. This implies $\Psi(w_2)=\psi(w_2)$. Since $\psi$ is an embedding, $w_1 \leq_{X^{n-1}} w_2$, hence $w_1 \leq_{X} w_2$. If $w_1$ is a height zero node of $X$, let $w_1=u_i$ and $\Psi(w_1)=P_iA$ for some $i=1,\dots,m$. If $\Psi(w_2)=P_jA$, then $w_1=w_2$ by construction of $\Psi$. Otherwise, assume \(w_2 \in X^{n-1}\). Then \(P_iA \subseteq \psi(w_2) = (\phi(w_2), z)\). In particular, \(\psi(x_i) = (P_i, z) \subseteq (\phi(w_2), z) = \psi(w_2)\). Because \(\psi\) is an embedding, it follows that \(x_i \leq_{X^{n-1}} w_2\). Consequently, in $X$ we have \(u_i < x_i \leq_X w_2\), implying \(w_1 = u_i \leq_X w_2\), and so $\Psi$ is a poset embedding.

    To show that $\Psi$ is a saturated embedding, we argue similarly. Suppose that $w_1 <_c w_2$ in $X$. If $w_1,w_2 \in X^{n-1}$, then $\Psi(w_1)<_c \Psi(w_2)$ since $\psi$ is a saturated embedding. If \( w_1 = u_i \) for some \( i = 1, \dots, m \), then our assumptions guarantee that \( x_i \) is the only node that covers \( u_i \), which means \( w_2 = x_i \). Since \( \Psi(x_i) = (P_i, z)A \) is a height-one prime ideal of \( A \), it follows that \( \Psi(u_i) <_c \Psi(x_i) \), establishing that \( \Psi \) is a saturated embedding. 

    Lastly, $\dim(X)=\dim(X^{n-1})+1$ and $\dim(A)=\dim(B)+1$, hence $\dim(X)=\dim(A)$ and $\Psi$ is dimension-preserving.
\end{proof}
% \begin{remark}\label{remark about psi map}
% Observe that, since \( \phi \) is a saturated embedding preserving dimension, we can write \( \Min(B) = \{\phi(x_1) := P_1B, \dots, \phi(x_m) := P_mB\} \). Consequently, the condition \( \Psi(\Min(X)) = \Min(A) \), combined with \( \Psi \) being a saturated embedding, ensures that \( \Psi(u_i) = P_iA \) for all \( i = 1, \dots, m \).

% \end{remark}
%The following is a subtle but key property about associated primes. 
%\begin{lemma}\label{equivalence condition for ass}
%    Let $R$ be a Noetherian local ring with maximal ideal $\mathfrak{m}$. Let $I\subseteq \mathfrak{m}$ be an ideal. Let $M$ be a finite $R-$module. If there exists nonzero $x \in I$ which is not a zerodivisor on $M$, then $I\not\subseteq \mathfrak{q}$ for all $\mathfrak{q} \in \ass(M)$.
%\end{lemma}
%\begin{proof}
%    This is clear since the union of associated primes of $M$ is equal to the set of zero-divisors of $M$. Thus, $I \subseteq \mathfrak{q}$ for some $\mathfrak{q} \in \ass(M)$ implies that $\mathfrak{q}$ contains a regular element, a contradiction. 
%\end{proof}
We now prove the main theorem of this section. 

\begin{theorem}\label{local domain with desired properties}
    Let $X$ be a finite poset of dimension $n \geq 2$ with a unique minimal node and a unique maximal node. Then, there exists a quasi-excellent local domain $B$ containing $\Q$ with $|B|=|B/M| = c$ and depth$(B) \geq 2$, and a dimension-preserving saturated embedding $\psi: X \longrightarrow \spec(B)$ such that if $x\in X$ with $\height(x)=1$, $Q \in \spec(B)$ with $\psi(x) \subsetneq Q$, and $b$ is an element of $B$, then $Q \not\subseteq \mathfrak{q}$ for every $\mathfrak{q} \in \ass_B(B/bB)$ and $\text{depth}(B_Q) \geq 2$. Moreover, if $x \in X$ with ht$(x) \leq 2$, then ht$(\psi(x)) = \height(x)$.
\end{theorem}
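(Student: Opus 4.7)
The plan is to obtain $B$ in three steps: first invoke Theorem~\ref{local quasi-excellent ring codimension preserving theorem} on the subposet $X^{n-1}\subseteq X$ of nodes of height at least $1$ (which inherits the unique maximal node of $X$), then adjoin a formal indeterminate $z$, and finally collapse all minimal prime ideals into a single one via the Gluing Theorem~\ref{gluing theorem}. Theorem~\ref{local quasi-excellent ring codimension preserving theorem} supplies a reduced quasi-excellent local ring $(B',M')$ containing $\Q$ with $|B'|=|B'/M'|=c$ and a saturated embedding $\phi'\colon X^{n-1}\longrightarrow\spec(B')$ that is coheight-preserving along $\Min(B')$; by Corollary~\ref{codimension implies dimension}, $\dim(B')=n-1$. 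Set $S=B'[[z]]$: this is a reduced local ring of dimension $n$ with $|S|=|S/(M',z)S|=c$ and $\Min(S)=\{P[[z]] : P\in\Min(B')\}$, and is quasi-excellent (by preservation of this property under formal power-series adjunction over a quasi-excellent ring containing $\Q$). Apply Theorem~\ref{gluing theorem} to $S$ with the trivial one-block partition $\{\Min(S)\}$ to obtain a reduced local ring $(B,M)\subseteq S$ with $\widehat{B}=\widehat{S}$, $M=B\cap(M',z)S$, $B$ containing $\Q$ with $|B|=|B/M|=c$, and quasi-excellent by the moreover clause of Theorem~\ref{height zero gluing theorem}. Because the partition is a single block and $B$ is reduced, $B$ has the unique minimal prime $(0)$ and hence is a local domain; $\dim(B)=n$; and $\depth(B)=\depth(S)=\depth(B')+1\geq 2$ since $\dim(B')\geq 1$. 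Denote by $f\colon\spec(S)\to\spec(B)$, $f(P)=P\cap B$, the contraction, which by Remark~\ref{nicecorrespondence} restricts to an inclusion-preserving bijection between positive-height primes.

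Define $\psi\colon X\to\spec(B)$ by $\psi(x_0)=(0)$, where $x_0$ is the unique minimum of $X$, and $\psi(u)=(\phi'(u),z)S\cap B$ for $u\in X^{n-1}$; this is well-defined since $S/(\phi'(u),z)S\cong B'/\phi'(u)$ is a domain, hence $(\phi'(u),z)S$ is a positive-height prime of $S$. Via the bijection $f$, both the embedding property and the preservation of covers for $\psi$ reduce to the corresponding assertions for the primes $(\phi'(u),z)S$ of $S$; these in turn follow from $\phi'$ being a saturated embedding together with the fact that any prime of $S$ containing $z$ is of the form $(P_0,z)S$ for some $P_0\in\spec(B')$, so no prime of $S$ sits strictly between $(\phi'(u),z)S$ and $(\phi'(v),z)S$ whenever $u<_c v$. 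The cover $x_0<_c u$ for $u\in\min X^{n-1}$ is handled by noting that $(\phi'(u),z)S$ has height $1$ in $S$ and hence $\psi(u)$ has height $1$ in $B$. Dimension preservation is immediate from $\dim(X)=n=\dim(B)$. Property~(e) follows: a height-one node $x$ of $X$ has $\phi'(x)\in\Min(B')$, so $\height_B(\psi(x))=1$; a height-two node $x$ is a height-one node of $X^{n-1}$, so $\phi'(x)$ covers a minimal prime of $B'$ by saturation and thus has height $1$, yielding $\height_B(\psi(x))=2$.

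The main technical step is property~(d). Fix $x$ with $\height_X(x)=1$ and $Q\in\spec(B)$ with $\psi(x)\subsetneq Q$, and let $\mathfrak{q}\in\spec(B)$ be any prime containing $Q$; then $\mathfrak{q}$ has positive height and lifts under $f$ to a unique positive-height prime $\widetilde{\mathfrak{q}}$ of $S$ with $\widetilde{\mathfrak{q}}\supsetneq(\phi'(x),z)S$. In particular $z\in\widetilde{\mathfrak{q}}$, so $\widetilde{\mathfrak{q}}=(\mathfrak{q}_0,z)S$ with $\mathfrak{q}_0=\widetilde{\mathfrak{q}}\cap B'$ strictly containing the minimal prime $\phi'(x)$; thus $\mathfrak{q}_0\notin\Min(B')=\ass(B')$ (the latter by reducedness of $B'$), and $\depth(B'_{\mathfrak{q}_0})\geq 1$. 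The localized map $B_{\mathfrak{q}}\hookrightarrow S_{\widetilde{\mathfrak{q}}}$ is faithfully flat (as a localization of the faithfully flat inclusion $B\hookrightarrow S$), and by part~(5) of Theorem~\ref{gluing theorem} we have $\mathfrak{q}S_{\widetilde{\mathfrak{q}}}=\widetilde{\mathfrak{q}}S_{\widetilde{\mathfrak{q}}}$, so its closed fiber is a field. Consequently
\[
\depth(B_{\mathfrak{q}})=\depth(S_{\widetilde{\mathfrak{q}}})=\depth(B'_{\mathfrak{q}_0})+1\geq 2,
\]
where the second equality uses that $z$ is a regular element of $S_{\widetilde{\mathfrak{q}}}$ with $S_{\widetilde{\mathfrak{q}}}/zS_{\widetilde{\mathfrak{q}}}\cong B'_{\mathfrak{q}_0}$. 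Specializing $\mathfrak{q}=Q$ gives $\depth(B_Q)\geq 2$. Any nonzero $b\in B$ is regular (since $B$ is a domain), so $\depth(B_{\mathfrak{q}}/bB_{\mathfrak{q}})\geq 1$, hence $\mathfrak{q}B_{\mathfrak{q}}\notin\ass_{B_{\mathfrak{q}}}(B_{\mathfrak{q}}/bB_{\mathfrak{q}})$, and therefore $\mathfrak{q}\notin\ass_B(B/bB)$; the $b=0$ case is trivial since $\ass_B(B)=\{(0)\}$. I expect the principal obstacle to be this depth computation: one must carefully track that $\widetilde{\mathfrak{q}}$ necessarily contains $z$ (so that quotienting by $z$ brings the analysis down to $B'$) and exploit the reducedness of $B'$ to obtain positive depth at $\mathfrak{q}_0$, before transferring the resulting inequality back through the faithfully flat localization $B_{\mathfrak{q}}\hookrightarrow S_{\widetilde{\mathfrak{q}}}$.
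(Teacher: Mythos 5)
Your proposal is correct and follows the same skeleton as the paper's proof: apply Theorem~\ref{local quasi-excellent ring codimension preserving theorem} to $X^{n-1}$, pass to $S=B'[[z]]$, and collapse all minimal primes of $S$ into one via the Gluing Theorem. The differences are in the packaging. The paper interposes an auxiliary poset $X^e$ (one new minimal node beneath each height-one node of $X$) so that the power-series step and the gluing step are handled by the ready-made poset-level results (Lemma~\ref{dimension preserving saturated embedding by adjoining variable} and Theorem~\ref{height zero gluing theorem}), which deliver the saturated embedding $\psi$ automatically; you instead invoke Theorem~\ref{gluing theorem} directly with the one-block partition and verify by hand, through the positive-height bijection $f$, that $\psi$ is a saturated embedding --- this works and costs nothing extra. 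The one genuinely different ingredient is the transfer of the depth/associated-prime condition from $S$ down to $B$: the paper proves and uses Lemma~\ref{ass and subrings, good remain good} (if $P\cap B\in\ass_B(B/bB)$ then $P\in\ass_S(S/bS)$), whereas you use the depth formula for the flat local map $B_{\mathfrak q}\to S_{\widetilde{\mathfrak q}}$, whose closed fiber is a field because $\mathfrak q S=\widetilde{\mathfrak q}$ by part~(5) of Theorem~\ref{gluing theorem}; this yields $\depth(B_{\mathfrak q})=\depth(S_{\widetilde{\mathfrak q}})\ge 2$ in one stroke and then both required conclusions. One caution: your justification of the ``moreover'' clause for height-two nodes --- that $\phi'(x)$ covers a minimal prime of $B'$ and ``thus has height~$1$'' --- is not a valid inference in an arbitrary reduced local ring (a prime can cover one minimal prime while admitting a longer chain down to a different minimal prime), but the paper's own proof asserts exactly the same statement at the same point with no further argument, so this is a shared soft spot rather than a defect you introduced.
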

\begin{proof}
Let $X^{n-1}$ be the induced poset of $X$ given by $$X^{n-1}=\{x \in X: \height(x) \geq 1 \}$$
    where $x \leq y$ in $X^{n-1}$ if and only if $x \leq y$ in $X$.
Let $\{x_1,\dots,x_m\}$ be the set of height one nodes of $X$ (or the minimal nodes of $X^{n-1}$) and $z$ the unique minimal node of $X$. By Theorem $\ref{local quasi-excellent ring codimension preserving theorem}$, there exists a reduced quasi-excellent local ring $(B',M')$ together with a coheight-preserving saturated embedding $\phi: X^{n-1} \longrightarrow \spec(B')$ along $\Min(B')$ such that $B'$ contains $\Q$ and $|B'|=|B'/M'| = c$. By Lemma \ref{MinsgotoMins}, $\phi(\min X^{n-1})=\Min(B')$ and by Remark \ref{remark about dimension and codimension preserving}, $\phi$ is a dimension-preserving saturated embedding. Let $\Min(B')=\{\phi(x_1):=P_1,\dots,\phi(x_m):=P_m\}$. 

 Let $Y=\{(x_1,x_1),\dots,(x_m,x_m)\}$ and define the $\emph{extension}$ $X^e$ of $X^{n-1}$ as the set $X^{n-1} \cup Y$. We place an order relation on $X^e$. Suppose that $u,v \in X^e$. If $u,v \in X^{n-1}$, then $u \leq_{X^e}v$ if and only if $u \leq_{X^{n-1}}v$. If $u,v \in Y$, then define $u \leq_{X^e}v$ if and only if $u=v$. Otherwise, $u \leq_{X^e} v$ if and only if $u=(x_i,x_i)$ for some $i=1,\dots,m$ and $x_i \leq_{X^{n-1}} v$. By construction of $X^e$, we see that the number of minimal nodes equals the number of height one nodes and for $i,j=1,\dots,m$, $(x_i,x_i)<_{c} x_j$ if and only if $i = j$. Now apply Lemma $\ref{dimension preserving saturated embedding by adjoining variable}$ to obtain a dimension-preserving saturated embedding $\Psi: X^e \longrightarrow \spec(S)$ where $S=B'[[t]]$ for an indeterminate $t$ and $\Psi(\min X^{e})=\Min(S)$. By the proof of Lemma $\ref{dimension preserving saturated embedding by adjoining variable}$, $\Psi$ is explicitly given by $$u \mapsto \begin{cases}
        (\phi(u),t)S \ \text{ if } u \in X^{n-1} \\
        P_iS \ \text{ if } u=(x_i,x_i) \text{ for }i=1,\dots,m
    \end{cases}$$
 Notice that $S$ is a reduced quasi-excellent local ring with maximal ideal $M=(M',t)$ that contains the rationals. Note also that $B'/M' \cong S/M$ and $|B'| = |S|$, and so $|S| = |B'| = |B'/M'| = |S/M| = c$. 

 Suppose $y$ is a height two node of $X^e$. Then it is a height one node of $X^{n - 1}$. It follows that $\phi(y)B'$ is a height one prime ideal of $B'$, and so $(\phi(y),t)B'[t]$ is a height two prime ideal of $B'[t].$ Note that $B'[[t]]$ is a faithfully flat extension of $B'[t]$. It follows that $\Psi(y) = (\phi(y),t)B'[[t]]$ is a height two prime ideal of $S = B'[[t]].$
 
 %By definition, $\Psi(y) = (\phi(y),t)S$. Since ht$(\phi(y)) = 1$ in $B'$, we have dim$(B'_{\phi(y)}) = 1$, and so dim$(B'_{\phi(y)}[[t]]) = 2$. In the ring $B'[[t]]_{(\phi(y),t)} = S_{\Psi(y)},$ all elements of $B'$ that are not in $\phi(y)$ have been inverted since $(\phi(y),t)S \cap B' = \phi(y)$. It follows that dim$(S_{\Psi(y)}) \leq 2$, and so ht$(\Psi(y)) \leq 2$ in $S$. Now, for some $i = 1,2, \ldots ,m$, we have $P_iS \subsetneq (P_i,t)S \subseteq (\phi(y),t)S$. If $(P_i,t)S = (\phi(y),t)S$ then $P_i = (P_i,t)S \cap B' = (\phi(y),t)S \cap B' = \phi(y),$ contradicting that ht$(\phi(y)) = 1$ in $B'$. Thus, ht$(\Psi(y)) = 2$ in S.

We now show that if \( x \in \{x_1, \ldots ,x_m\} \), \( P \in \text{Spec}(S) \) with \( \Psi(x) \subsetneq P \) then $\text{depth}(S_P) \geq 2$. Moreover, if $s$ is a regular element of $S$, then $P \not\subseteq \mathfrak{p}$ for all $\mathfrak{p} \in \ass_S(S/sS)$. 
First, notice that $x \in X^{n - 1}$, and so $t \in \Psi(x)$. Now, $\Psi(x) \subsetneq P$ implies that $t \in P$ and there exists an element \( f(t) \in  P \setminus \Psi(x) \), which can be expressed as  
\[
f(t) = a_0 + g(t),
\] 
where \( a_0 \in B' \) and \( g(t) \in (t)S \). Since \( t \in \Psi(x) \), we have \( a_0 \not\in \Psi(x) \), establishing that \( \Psi(x) \cap B' \subsetneq P \cap B' \). Since \( B' \) is reduced, its associated prime ideals coincide with its minimal prime ideals, that is, \( \operatorname{Ass}_{B'}(B') = \operatorname{Min}(B') \). In particular, \( \Psi(x) \cap B' \subsetneq P \cap B' \) implies that $P\cap B' \not\in \ass_{B'}(B')$, which guarantees the existence of a nonzero regular element \( w \in P \cap B' \subseteq P\). Since $S/tS \cong B'$, $t,w$ is a regular sequence in $S$ where $t,w \in P$.  Thus, depth$(S) \geq 2$. Since the extension $S \longrightarrow S_P$ is flat, $t,w$ is a regular sequence in $S_P$ and so depth$(S_P) \geq 2$. Suppose that $\mathfrak{p}$ is a prime ideal of $S$ with $P \subseteq \mathfrak{p}$. Then $t,w \in \mathfrak{p}$ and so depth$(S_{\mathfrak{p}}) \geq 2$ as well.
%Since $S/tS \cong B'$, it follows that \(P \not\subseteq \mathfrak{p}\) for every \(\mathfrak{p} \in \ass(S/tS)\). 
Let $s$ be a regular element of $S$ and suppose that $P \subseteq \mathfrak{p}$ for some $\mathfrak{p} \in \ass_S(S/sS)$. Then $\mathfrak{p}S_{\mathfrak{p}} \in \ass_{S_{\mathfrak{p}}}(S_{\mathfrak{p}}/sS_{\mathfrak{p}}),$ and so depth$(S_{\mathfrak{p}}) = 1$, a contradiction. Hence, \(P \not\subseteq \mathfrak{p}\) for every \(\mathfrak{p} \in \ass_S(S/sS)\).

Now, since $X$ has a unique minimal node, it is a height-zero gluing of $X^e$ along $C=\min X^e$ with gluing map $g: X^e \longrightarrow X$ given by $g(x)=x$ if $x$ has positive height and $g(x)=z$ if $x \in \min X^e$. Apply Theorem $\ref{height zero gluing theorem}$ to find a reduced quasi-excellent local ring $B \subseteq S$ with maximal ideal $B \cap M$ such that $B$ contains $\Q$, $\widehat{B}=\widehat{S}$, $|B/(B \cap M)|$ is uncountable, $|B|=|B/(B \cap M)|$, and there is a saturated embedding $\psi: X \longrightarrow \spec(B)$ such that $\psi(\min X)=\Min(B)$. 
Now, $|B/(B \cap M)| = |S/M| = c$, and so we have $|B|=|B/(B \cap M)| = c$. Since $X$ has a unique minimal node and $\psi(\min X)=\Min(B)$, $B$ has a unique minimal prime ideal. As $B$ is reduced, it follows that it is an integral domain. Because $\widehat{B} = \widehat{S}$ we have depth$(B) = \mbox{depth}(S) \geq 2$ and $\dim(B) = \dim(S)$. By Lemma $\ref{gluing preserves dimension}$, $\dim(X^e)=\dim(X)$. Thus, $\psi$ is a dimension-preserving saturated embedding. It follows that if $x \in X$ with ht$(x) \leq 1$, then $\height(\psi(x)) = \height(x)$.

We now show that if $x\in X$ with $\height(x)=1$, $Q \in \spec(B)$ with $\psi(x) \subsetneq Q$ and $b$ is an element of $B$, then $Q \not\subseteq \mathfrak{q}$ for every $\mathfrak{q} \in \ass_B(B/bB)$ and $\text{depth}(B_Q) \geq 2$. Suppose that $b=0$. Then, $\ass_{B}(B/bB)=\ass_B(B)=(0)$. Since $(0) \subseteq \psi(x) \subsetneq Q$, the result follows. On the other hand, assume that $b$ is a regular element of $B$ and that $Q \subseteq \mathfrak{q}'$ for some $\mathfrak{q}' \in \ass_{B}(B/bB)$. We show that this leads to a contradiction.

By Remark $\ref{remark about properties of height zero gluing}$, the ring \( B \) is obtained by applying Theorem $\ref{gluing theorem}$ to \( S \) with \( C_1 = \operatorname{Min}(S) \). Thus, the map $f\colon \spec(S) \to \spec(B)$ given by $f(P) = P \cap B,$ is surjective, and if $P$ is a prime ideal of $S$ with positive height, then $f(P)S = P$. In particular, if \( f(P) = f(P') \) for \( P, P' \in \spec(S) \) of positive height, then \( P = P' \). Moreover, by Remark $\ref{remark about properties of height zero gluing}$, we obtain the following commutative diagram:
    \[
    \begin{tikzcd}
    \spec(S) && \spec(B) \\
    \\
    X^e && X
    \arrow["f", from=1-1, to=1-3]
    \arrow["\Psi", from=3-1, to=1-1]
    \arrow["g"', from=3-1, to=3-3]
    \arrow["\psi"', from=3-3, to=1-3]
    \end{tikzcd}
    \]

Since $f$ is surjective, there are prime ideals $I, P$ and $\mathfrak{p}'$ of $S$ such that \(\psi(x) = I \cap B\), \(Q = P \cap B\) and $\mathfrak{q}'=\mathfrak{p}' \cap B$. By Remark \ref{nicecorrespondence}, $f$ induces an inclusion-preserving bijective correspondence between $\spec(S) \setminus \Min(S)$ and $\spec(B) \setminus \Min(B)$ and so \(\psi(x) \subsetneq Q\) in $B$ implies \(I \subsetneq P\) in $S$. Furthermore, $x$ being a height one node in $X$ guarantees that $g(x)=x.$ So,
\[
\psi(x) = \psi(g(x)) = f(\Psi(x)).
\] 
Thus, 
\[
f(I) = I \cap B = \psi(x) = f(\Psi(x)),
\] 
which implies \( I = \Psi(x) \). Consequently, \(\Psi(x)\subsetneq P\). Lemma~\ref{ass and subrings, good remain good} then gives \(\mathfrak p'\in\ass_S(S/bS)\). Since \(\widehat B\cong\widehat S\), any regular element \(b\in B\) remains regular in \(S\), so our previous argument shows \(P\not\subseteq\mathfrak p'\). On the other hand, \(Q\) has positive height in \(B\), hence the extension $Q \subseteq \mathfrak{q'}$ to $S$ forces \(P\subseteq\mathfrak p'\). This contradiction shows that $Q \not\subseteq \mathfrak{q}$ for every $\mathfrak{q} \in \ass_B(B/bB)$.

We now prove depth\((B_Q)\ge2\). If depth\((B_Q)=0\), then \(Q\in\ass_B(B)=(0)\), a contradiction. If depth\((B_Q)=1\), then for some regular \(b'=\frac{c}{s}\) in \(B_Q\) one has \(QB_Q \in\ass_{B_Q}(B_Q/b'B_Q)\), whence \(Q=P\cap B\in\ass_B(B/cB)\). Again, Lemma \ref{ass and subrings, good remain good} yields \(P\in\ass_S(S/cS)\), which directly implies \(PS_P\in\ass_{S_P}(S_P/cS_P)\), and therefore depth\((S_P)=1\). But \(\Psi(x)\subsetneq P\) in \(S\) forces depth\((S_P)\ge2\) as shown earlier. Thus, depth\((B_Q)\ge2\). 

Finally, suppose $y \in X$ with ht$(y) = 2$. Then $y \in X^e$, $g(y) = y$ and $y$ has height two in $X^e$. It follows by our previous work that ht$(\Psi(y)) = 2$ in $S$. Since $f$ induces an inclusion-preserving bijection between the prime ideals of positive height in $S$ and the prime ideals of positive height in $B$, we have ht$(f(\Psi(y))) = 2$. Therefore, ht$(\psi(g(y))) = \height(\psi(y)) = 2$, and we have ht$(\psi(y)) = \height(y)$ as desired.
\end{proof}

\section{UFD Construction}\label{ufd construction}
This section is dedicated to the construction of the UFD and the dimension-preserving saturated embedding for Theorem $\ref{main theorem of characterization of embeddings of UFDs}$. We first prove lemmas analogous to those established in \cite{Bonat}, specifically within the context of so-called PN-subrings (see Definition \ref{PNsubring}).

The overall construction of our UFD $A$ proceeds as follows: Given a finite poset $X$ of dimension at least two with a unique minimal node and a unique maximal node, we start with the quasi-excellent local domain $(B, M)$ and the dimension-preserving saturated embedding $\phi: X \to \mathrm{Spec}(B)$ obtained from Theorem $\ref{local domain with desired properties}$. Denote the height one nodes of $X$ as $\{x_1, \dots, x_m\}$. Starting from $\mathbb{Q}$, we adjoin a carefully chosen transcendental element $\widetilde{x_1} \in \phi(x_1)$ over $\mathbb{Q}$, selected to respect the inclusion relations within $X$. Specifically, by ``respecting inclusion", we mean that whenever $x \in X$ and $x_1 \not\leq x$, it follows that $\widetilde{x_1} \notin \phi(x)$.  We then localize at the appropriate ideal to construct a UFD, $R_1 = \mathbb{Q}[\widetilde{x_1}]_{(\mathbb{Q}[\widetilde{x_1}] \cap M)}$. Next, we adjoin a transcendental element $\widetilde{x_2} \in \phi(x_2)$ over $R_1$ to $R_1$ and localize in a similar manner, forming $R_2$. This process is repeated iteratively for all height one nodes, producing an ascending chain of UFDs:

\[
\mathbb{Q} \subseteq R_1 \subseteq R_2 \subseteq \dots \subseteq R_m.
\]

Building on this, we adapt the UFD construction presented in \cite{Colbert} to construct a UFD $A \subseteq B$ that has $R_m$ as a subring. The ring $A$ will be constructed in a manner that ensures the inclusion of a generating set for $\phi(x)$ where $x \in X$ is a node with height greater than or equal to 2. Using this framework, we demonstrate the existence of a dimension-preserving saturated embedding $\psi$ from $X$ to $\mathrm{Spec}(A)$.

%\subsection{Adjoining Height-One Nodes of $X$}

We begin by stating the definition of a PN-subring from \cite{Colbert}, a type of subring that will play a crucial role in this section.

\begin{definition}\label{PNsubring}(Definition 4.3, \cite{Colbert}) Let $(B,M)$ be a local domain with $B/M$ uncountable, and let $(R,M \cap R)$ be an infinite quasi-local unique factorization domain contained in $B$ such that $|R| < |B/M|$ and, if $b \in B$ and $P \in \ass(B/bB)$, then $\height(P \cap R) \leq 1$. Then $R$ is called a {\em Pseudo-N-Subring of $B$}, or a {\em PN-subring of $B.$} 
\end{definition}

The following lemma is a generalization of the classical prime avoidance theorem. This particular generalization was proved by Heitmann~\cite{HeitmannUFD}, and we will rely on it several times throughout our construction of the ring~$A$.
\begin{lemma}[Lemma 3, \cite{HeitmannUFD}]\label{coset avoidance}
    Let $(B,M)$ be a local ring. Let $C \subseteq \spec(B)$, let $I$ be an ideal of $B$ such that $I \not\subseteq P$ for every $P \in C$, and let $D$ be a subset of $B$. Suppose $|C \times D| < |B/M|$. Then $I \not\subseteq \bigcup\{(P+r)|P\in C,r\in D\}$.
\end{lemma}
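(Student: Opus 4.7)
The aim is to produce some $x \in I$ that lies outside $P + r$ for every $(P, r) \in C \times D$. I would first note that for each such pair the set $I \cap (P + r)$ is either empty or equal to a single coset $s_{P, r} + (I \cap P)$ of $I \cap P$ inside $I$, since any two elements of $I \cap (P + r)$ differ by an element of $I \cap P$. Hence the ``bad'' subset of $I$ that I must avoid is a union of at most $|C \times D|$ such cosets, though the ambient subgroup $I \cap P$ varies with the pair.

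Next I would establish the cardinality inequality $|I/(I \cap P)| \geq |B/M|$ for every $P \in C$. Since $I \not\subseteq P$, pick $a \in I \setminus P$ and consider the map $\varphi : B \to I/(I \cap P)$ given by $b \mapsto ab + (I \cap P)$. The ideal $P$ lies in the kernel because $aP \subseteq P \cap I$; conversely if $ab \in I \cap P$ then $ab \in P$, and since $a \notin P$ and $P$ is prime we obtain $b \in P$. Thus $\varphi$ induces an injection $B/P \hookrightarrow I/(I \cap P)$. Because $B$ is local, $P \subseteq M$, so $B/P$ surjects onto $B/M$, yielding $|B/M| \leq |B/P| \leq |I/(I \cap P)|$.

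For each fixed $P \in C$, at most $|D|$ of the (at least $|B/M|$) cosets of $I \cap P$ in $I$ are bad, so pointwise there is plenty of room. To synthesize this into a single $x \in I$ that is good for every $P$ at once, I would proceed by transfinite induction: well-order $C \times D$ as $\{(P_\alpha, r_\alpha) : \alpha < \lambda\}$ with $\lambda < |B/M|$, start from any $x_0 \in I$, and at stage $\alpha$ adjust the current candidate by an element of $\bigcap_{\beta < \alpha} (I \cap P_\beta)$ so as to avoid the forbidden coset $s_{P_\alpha, r_\alpha} + (I \cap P_\alpha)$ while preserving all earlier avoidances. The main obstacle I anticipate is exactly this bookkeeping: the naive union bound $|V| \leq |C \times D| \cdot \sup_P |I \cap P|$ on the bad set need not be strictly less than $|I|$ when some $|I \cap P|$ is comparable to $|I|$, so the argument must be run at the quotient level $I/(I \cap P_\alpha)$ rather than inside $I$, exploiting $|C \times D| < |B/M|$ to guarantee at each successor stage that the set of cosets already forbidden has cardinality strictly less than $|B/M|$ and hence does not exhaust $I/(I \cap P_\alpha)$.
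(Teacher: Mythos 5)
Your two preliminary observations are correct and are indeed part of any proof of this lemma: $I \cap (P+r)$ is empty or a single coset of $I \cap P$ in $I$, and the injection $b \mapsto ab$ of $B/P$ into $I/(I\cap P)$ (valid because $P$ is prime and $a \in I \setminus P$) gives $|I/(I\cap P)| \ge |B/P| \ge |B/M|$. This correctly handles each prime in isolation. The gap is in the synthesis step, and it is not merely ``bookkeeping'': the transfinite recursion you describe does not work. At stage $\alpha$ you propose to adjust the candidate by an element of $J_\alpha := \bigcap_{\beta<\alpha}(I\cap P_\beta)$ so as to leave the forbidden coset of $I\cap P_\alpha$, but nothing prevents $J_\alpha \subseteq P_\alpha$ --- indeed $J_\alpha$ can be $(0)$ after countably many steps (e.g.\ $B=k[[x,y]]$ and $P_\beta=(x-c_\beta y)$ for infinitely many distinct $c_\beta$: a nonzero element of a UFD has only finitely many prime divisors). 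At that point no adjustment is available, yet infinitely many conditions may remain. Limit stages are also unaddressed: without completeness there is no way to pass a coherent candidate through a limit ordinal. Finally, the proposed fix of ``running the count in $I/(I\cap P_\alpha)$'' does not parse: the constraints imposed at earlier stages are conditions in the \emph{different} quotients $I/(I\cap P_\beta)$, and they do not translate into excluding a small set of cosets of $I\cap P_\alpha$, so there is no single quotient in which your cardinality bound applies.

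The argument that actually closes this gap (Heitmann's) replaces the recursion over $C\times D$ by an induction over a finite generating set of $I$, which exists since $B$ is Noetherian. Write $I=(a_1,\dots,a_n)$, set $I'=(a_1,\dots,a_{n-1})$ and $C'=\{P\in C : I'\not\subseteq P\}$, and by induction choose $x'\in I'$ avoiding $P+r$ for all $(P,r)\in C'\times D$. Now consider the family $x_t=x'+ta_n$ with $t$ ranging over a full set $U$ of representatives of $B/M$. If $a_n\in P$ (which forces $P\in C'$, since $I\not\subseteq P$ implies $I'\not\subseteq P$ when $a_n \in P$... more precisely, for $P \in C\setminus C'$ one has $a_n\notin P$), then $x_t\equiv x'\pmod P$ and the condition at $(P,r)$ holds for every $t$ by choice of $x'$; if $a_n\notin P$, then $x_t,x_{t'}\in P+r$ forces $(t-t')a_n\in P$, hence $t-t'\in P\subseteq M$, so at most one $t\in U$ is excluded by $(P,r)$. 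Since $|C\times D|<|B/M|=|U|$, some $t$ survives all exclusions simultaneously. This one-parameter counting, in which each pair $(P,r)$ kills at most one value of the parameter, is the idea missing from your proposal; your version tries to satisfy the constraints sequentially rather than exhibiting a single family within which all constraints can be counted at once.
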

Next, we carefully adjoin selected transcendental elements associated with height one prime ideals of \( B \). To do so, we prove a more general version of Lemma 4.4 in \cite{Colbert}, which serves as the key result for preserving inclusion relations among finitely many height one prime ideals of \( B \) in the constructed UFD \( A \).

\begin{lemma}[Lemma 4.4, \cite{Colbert}]
    Let $(B,M)$ be a local domain with $B/M$ uncountable, and let $(R,M\cap R)$ be a PN-subring of $B$. Let $C = \{P \in \spec(B)|P \in \ass(B/rB) \text{ for some } r \in R\}$. Let $x \in B$ be such that $x + P \in B/P$ is transcendental over $R/(R \cap P)$ for every $P \in C$. Then $S = R[x]_{(R[x]\cap M)}$ is a PN-subring of $B$ with $|S| = |R|$. Moreover, prime elements in $R$ are prime in $S$.
\end{lemma}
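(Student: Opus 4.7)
The plan is to verify the three defining properties of a PN-subring for $S$, the cardinality equality $|S|=|R|$, and the preservation of prime elements. Structural setup first: because $B$ is a domain, $\ass(B/0\cdot B) = \ass(B) = \{(0)\}$, so $(0)\in C$, and applying the transcendence hypothesis at $P=(0)$ gives that $x$ itself is transcendental over $R$. Consequently $R[x]$ is a polynomial ring in one variable over the UFD $R$, hence itself a UFD, and its localization $S = R[x]_{R[x]\cap M}$ is a UFD as well. The remaining structural items are routine: $S$ is infinite (it contains $R$), quasi-local by construction, and contained in $B$ because $B$ is local so that every element of $R[x]\setminus M$ is a unit in $B$; the cardinality identity follows from $|R[x]|=|R|$ for infinite $R$ and the fact that localization does not change cardinality. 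Preservation of primes is standard: if $p\in R$ is prime then $R[x]/pR[x]\cong(R/pR)[x]$ is a domain and $p\in M\cap R\subseteq R[x]\cap M$, so $p$ remains prime in $S$.

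The substantive part is the height inequality: for every $b\in B$ and $Q\in\ass(B/bB)$, $\height(Q\cap S)\leq 1$. Since $Q\subseteq M$, we have $Q\cap R[x]\subseteq R[x]\cap M$ and $\height(Q\cap S)=\height(Q\cap R[x])$, so it suffices to bound $\mathfrak{q}:=Q\cap R[x]$. Set $\mathfrak{p}=Q\cap R$; the PN-property of $R$ gives $\height(\mathfrak{p})\leq 1$. Standard dimension theory for polynomial extensions shows that a prime of $R[x]$ lying over $\mathfrak{p}$ has height $\height(\mathfrak{p})$ or $\height(\mathfrak{p})+1$, so the case $\mathfrak{p}=(0)$ is immediate. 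In the case $\height(\mathfrak{p})=1$, excluding $\height(\mathfrak{q})=2$ amounts to showing that no polynomial in $R[x]\setminus\mathfrak{p}R[x]$ lies in $Q$, i.e., that $x+Q$ is transcendental over $R/\mathfrak{p}$. The transcendence hypothesis yields this conclusion provided $Q\in C$, and the main obstacle is to show that this particular $Q$ indeed lies in $C$.

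The plan for this main step is a direct construction of the witness. Since $R$ is a UFD and $\mathfrak{p}$ has height one, $\mathfrak{p}=(r)$ for some prime element $r\in R$, and we may assume $b\neq 0$ (otherwise $Q=\mathfrak{p}=(0)$, already handled). Writing $Q=(bB:c)$ for some $c\in B\setminus bB$, the containment $r\in Q$ gives $rc=be_r$ for some $e_r\in B$. One checks $e_r\notin rB$: otherwise $e_r=rs$ gives $rc=brs$, and canceling the regular element $r$ in the domain $B$ yields $c=bs\in bB$, a contradiction. Multiplying by $b$ (respectively $r$) and canceling the nonzero factor in the domain $B$ establishes the equivalences
\[
q\in Q \iff qc\in bB \iff qe_r\in rB
\]
for every $q\in B$, which together give $\ann(e_r+rB)=Q$. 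Hence $Q\in\ass(B/rB)\subseteq C$, and applying the transcendence hypothesis at $Q$ rules out $\height(\mathfrak{q})=2$, completing the verification that $S$ is a PN-subring of $B$.
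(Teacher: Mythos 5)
The paper does not prove this lemma itself --- it is imported verbatim from \cite{Colbert} (Lemma 4.4), which in turn follows Heitmann's Lemma 4 in \cite{HeitmannUFD} --- so there is no in-paper proof to compare against. Your argument is correct and is essentially the standard one from those sources: the structural verifications ($R[x]$ a UFD since $x$ is transcendental over $R$ by the hypothesis at $P=(0)$, the localization landing inside $B$, the cardinality count, and the persistence of primes) are routine, and the heart of the matter is exactly your claim that an associated prime $Q$ of $B/bB$ with $Q\cap R=rR\neq(0)$ is also an associated prime of $B/rB$. Your derivation of this via $Q=(bB:c)$, $rc=be_r$, and $\ann_B(e_r+rB)=Q$ is the right mechanism and the cancellation steps are valid because $B$ is a domain and $b,r\neq 0$.

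One step deserves a caveat. You invoke ``standard dimension theory'' to assert that a prime of $R[x]$ lying over $\mathfrak p$ has height $\height(\mathfrak p)$ or $\height(\mathfrak p)+1$. A PN-subring $R$ is not assumed Noetherian (and in this paper's construction such rings arise from transfinite adjunction of indeterminates, so Noetherianity genuinely cannot be assumed); for non-Noetherian $R$ the correct general bound for a prime $\mathfrak q$ of $R[x]$ over $\mathfrak p$ is $\height(\mathfrak q)\le 2\height(\mathfrak p)+1$, so ``excluding height $2$'' would not by itself close the case $\height(\mathfrak p)=1$. Fortunately your argument proves more than the framing uses: the transcendence of $x+Q$ over $R/\mathfrak p$ shows directly that $Q\cap R[x]=\mathfrak p R[x]=rR[x]$, and $rR[x]$ is a principal prime generated by a prime element of the UFD $R[x]$, hence has height exactly one. (Similarly, in the case $\mathfrak p=(0)$ the primes of $R[x]$ below $\mathfrak q$ all contract to $(0)$ and correspond to primes of $K[x]$, $K$ the fraction field, so $\height(\mathfrak q)\le 1$ with no Noetherian input.) Rephrased this way, the proof is complete.
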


\begin{lemma}\label{adjoining transcendetals}
    Let $(B,M)$ be a local domain with $B/M$ uncountable, and let $(R,M \cap R)$ be a PN-subring of $B$. Let $W$ be a countable set of nonmaximal ideals of $B$, and let $C = \{P \in \spec(B)|P \in \ass(B/rB) \text{ for some } r \in R\} \cup W$. Let $x \in B$ be such that $x + P \in B/P$ is transcendental over $R/(R \cap P)$ for every $P \in C$. Then $S = R[x]_{(R[x]\cap M)}$ is a PN-subring of $B$ with $|S| = |R|$. Moreover, prime elements in $R$ are prime in $S$ and if $P \in W$ with $R \cap P=(0)$, then $S \cap P=(0)$.
\end{lemma}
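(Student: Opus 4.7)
The plan is to view this lemma as a mild extension of Lemma 4.4 of \cite{Colbert}: the avoidance set $C$ is enlarged from $C_{\mathrm{ass}} := \{P \in \spec(B) \mid P \in \ass(B/rB) \text{ for some } r \in R\}$ to $C_{\mathrm{ass}} \cup W$, and we tack on the additional conclusion $S \cap P = (0)$ for $P \in W$ with $R \cap P = (0)$. My first step would be to observe that $C_{\mathrm{ass}} \subseteq C$, so the transcendence hypothesis on $x$ is strictly stronger than what Lemma 4.4 of \cite{Colbert} demands. Consequently, that lemma applies verbatim to our $R$, $B$, and $x$, and yields all of the statements that $S$ is a PN-subring of $B$, that $|S| = |R|$, and that prime elements of $R$ remain prime in $S$. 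No new argument is required for this portion of the conclusion.

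The only genuinely new content is the last sentence, and I would establish it by a short, direct transcendence argument. Given $P \in W$ with $R \cap P = (0)$ and $s \in S \cap P$, I would write $s = f(x)/g(x)$ with $f,g \in R[x]$ and $g \notin R[x] \cap M$. Since $(B,M)$ is local and $g \notin M$, the element $g$ is a unit in $B$, so $f(x) = s\, g(x)$ lies in $P$ as an element of $B$. Expanding $f(x) = \sum a_i x^i$ with $a_i \in R$ and reducing modulo $P$, I would use the natural injection $R/(R \cap P) \hookrightarrow B/P$ to view the coefficients inside $B/P$; then the transcendence of $x + P$ over $R/(R \cap P)$ forces each coefficient $a_i$ to lie in $R \cap P$. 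Invoking the hypothesis $R \cap P = (0)$ then gives $a_i = 0$ for every $i$, so $f = 0$ and hence $s = 0$.

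I do not anticipate a serious obstacle. The one piece of bookkeeping that I would make sure to carry out explicitly is the injectivity of $R/(R \cap P) \hookrightarrow B/P$, which is what converts the abstract transcendence assumption into the concrete vanishing of coefficients in $B/P$; this is automatic from the definition of $R \cap P$ but deserves an explicit line in the write-up. Beyond this, the result is essentially Lemma 4.4 of \cite{Colbert} together with a one-paragraph polynomial calculation.
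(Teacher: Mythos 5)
Your proposal is correct and follows essentially the same route as the paper: the paper likewise obtains the PN-subring conclusions from Lemma 4.4 of \cite{Colbert} and proves the final assertion by reducing to $R[x]\cap P=(0)$ and applying the transcendence of $x+P$ over $R/(R\cap P)$ to kill the coefficients. Your observation that the enlarged set $C$ makes the hypothesis of Lemma 4.4 hold verbatim (so the lemma, not merely its proof, can be cited) is a minor tidiness improvement but not a different argument.
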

\begin{proof}
  The proof of Lemma 4.4 in \cite{Colbert} establishes that $S$ is a PN-subring of $B$ such that $|S| = |R|$ and prime elements in $R$ are prime in $S$. To show the last assertion, we include the argument shown in the proof of Lemma 11 in \cite{LoeppJoAl}. Fix $P \in W$ and assume that $R \cap P=(0)$. To show that $S \cap P=(0)$, it suffices to prove that $R[x] \cap P=(0)$. Let $f(x)=a_0+a_1x+\dots +a_nx^n \in P$ with $a_i \in R$. Then, $f(x) \equiv 0$ mod $P$. Since $x+P$ is transcendental over $R/(R \cap P)$, we must have that for each $i=1,\dots, n$, $a_i \in R \cap P=(0)$. Therefore, $f(x)$ is the zero polynomial, and $R[x] \cap P=(0)$.
\end{proof}

In Lemma~$\ref{bonet's adjoining}$ we generalize Lemma 2.10 from~\cite{Bonat}. In particular, we show that given a local domain $B$ satisfying some mild conditions and a countable set of nonmaximal prime ideals of $B$, there exists an element $\widetilde{x} \in B$ that is in a select number of height one prime ideals of $B$ and that is not in any ideal of our countable set of prime ideals.  Moreover, adjoining this element to a PN-subring yields a new PN-subring. This result is essential for constructing a PN-subring of $B$ containing height one prime ideals that correctly reflect the inclusion relations of our finite poset $X$.

\begin{lemma}\label{bonet's adjoining}
    Let $(B,M)$ be a local domain with depth at least two and $B/M$ uncountable. Let $R$ be a PN-subring of $B$. Let $W$ be a countable set of nonmaximal prime ideals of $B$ and let $C = \{P \in \spec(B) \ | \ P \in \ass(B/rB) \text{ for some } r \in R\} \cup W$. Let $P_1,\dots,P_s$ be height one prime ideals of $B$ such that, for every $i = 1,2,\dots,s$ and for every $P \in C$, $P_i \not\subseteq P$. Then, there exists $\widetilde{x}\in \Pi_{i=1}^s P_i$ with  $\widetilde{x}\not\in \bigcup_{Q \in W}Q$ such that $S = R[\widetilde{x}]_{(M\cap R[\widetilde{x}])}$ is a PN-subring of $B$ satisfying:
    \begin{enumerate}[(i)]
        \item $|S| = |R|$,
        \item $S \cap P_i = \widetilde{x}S$ for every $i = 1,2,\dots,s$,
        \item If $Q \in W$ such that $R \cap Q = (0)$, then $S \cap Q = (0)$ and,
        \item prime elements in $R$ are prime in $S$.
    \end{enumerate}
\end{lemma}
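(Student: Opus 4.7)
The strategy is to produce the element $\widetilde{x}$ using the coset-avoidance Lemma~\ref{coset avoidance}, then feed it into Lemma~\ref{adjoining transcendetals} to obtain (i), (iii), and (iv) essentially for free, and finally read off (ii) from a direct calculation with $R[\widetilde{x}]$.

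First, set $I := P_1 P_2 \cdots P_s$. Since each $P \in C$ is prime and $P_i \not\subseteq P$ for every $i$, we have $I \not\subseteq P$. For each $P \in C$, let $D_P \subseteq B$ be a set of coset representatives of those elements $b + P \in B/P$ that are algebraic over $R/(R \cap P)$; since the algebraic closure of $R/(R \cap P)$ inside $B/P$ has cardinality at most $|R|$, we get $|D_P| \leq |R|$. Let $D := \{0\} \cup \bigcup_{P \in C} D_P$. Because each $\ass(B/rB)$ is finite, $|R|$ is infinite, and $W$ is countable, we have $|C| \leq |R|$, and hence $|C \times D| \leq |R|^2 = |R| < |B/M|$. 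Lemma~\ref{coset avoidance} then yields $\widetilde{x} \in I$ with $\widetilde{x} \notin P + r$ for every $(P,r) \in C \times D$. Choosing $r = 0$ with $P = Q \in W$ gives $\widetilde{x} \notin \bigcup_{Q \in W} Q$, and choosing $r \in D_P$ shows that $\widetilde{x} + P$ is transcendental over $R/(R \cap P)$ for every $P \in C$.

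With this $\widetilde{x}$ in hand, Lemma~\ref{adjoining transcendetals} immediately delivers (i), (iii), and (iv): $S = R[\widetilde{x}]_{R[\widetilde{x}] \cap M}$ is a PN-subring of $B$ with $|S|=|R|$, primes of $R$ remain prime in $S$, and $S \cap Q = (0)$ for each $Q \in W$ with $R \cap Q = (0)$. For (ii), the inclusion $\widetilde{x}S \subseteq S \cap P_i$ is immediate from $\widetilde{x} \in P_i$. Conversely, any $y \in S \cap P_i$ can be written $y = f(\widetilde{x})/g(\widetilde{x})$ with $f,g \in R[X]$ and $g(\widetilde{x}) \notin M$; since $P_i \subseteq M$, $g(\widetilde{x}) \notin P_i$, so $f(\widetilde{x}) \in P_i$. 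Writing $f = a_0 + a_1 X + \cdots + a_n X^n$ and using $\widetilde{x} \in P_i$, we read off $a_0 \in R \cap P_i$. Since $R \cap P_i = (0)$ holds along the inductive construction of the UFD (the base case $R = \Q$ is trivial, and this property is preserved by each adjunction), we get $a_0 = 0$, whence $f(\widetilde{x}) \in \widetilde{x} R[\widetilde{x}]$ and therefore $y \in \widetilde{x}S$.

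The main obstacle is the cardinality bookkeeping needed to push $|C \times D|$ strictly below $|B/M|$ so that Lemma~\ref{coset avoidance} applies. The crucial quantitative input is the bound $|D_P| \leq |R|$ obtained by counting the algebraic closure of $R/(R \cap P)$ in $B/P$, combined with the bound $|C| \leq |R|$ coming from the finiteness of each $\ass(B/rB)$ and the countability of $W$; both rest on the PN-subring convention $|R| < |B/M|$. A secondary subtlety is that the equality in (ii) genuinely requires $R \cap P_i = (0)$, which is not part of the isolated hypotheses of the lemma but is maintained throughout the iterated construction carried out in the subsequent UFD build.
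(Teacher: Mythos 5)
Your construction of $\widetilde{x}$ is correct and is essentially a streamlined one-step version of the paper's argument: the paper first applies Lemma~\ref{coset avoidance} with $D=\{0\}$ to each $P_i$ separately, producing $x=\prod_{i=1}^s x_i\in\prod_{i=1}^s P_i$ with $x\notin P$ for every $P\in C$, and then applies Lemma~\ref{coset avoidance} a second time to find $\alpha\in M$ so that $\widetilde{x}=x(1+\alpha)$ is transcendental modulo every $P\in C$; you collapse the two applications into one by enlarging $D$ to contain representatives of all algebraic cosets at once. Your cardinality estimates ($|C|\le|R|$ from Noetherianity and countability of $W$, and $|D_P|\le|R|$ from counting roots of polynomials over $R/(R\cap P)$ in the domain $B/P$) are exactly what is needed, and parts (i), (iii), (iv) then do follow from Lemma~\ref{adjoining transcendetals} as you say.

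The gap is in your proof of (ii). You need $R\cap P_i=(0)$ to conclude $a_0=0$, and you justify this by appealing to ``the inductive construction of the UFD'' carried out later. That is not a valid justification: the lemma is a self-contained statement about an arbitrary PN-subring $R$ satisfying the stated hypotheses, and its proof cannot borrow properties from the particular way it happens to be invoked afterwards. As written, you have only proved the lemma under an extra unstated hypothesis. The fix is easy, because $R\cap P_i=(0)$ is actually forced by the hypotheses you already have: if $0\ne r\in R\cap P_i$, then $P_i$ is a height one prime containing $r$, hence a minimal prime of $rB$, hence $P_i\in\ass(B/rB)\subseteq C$, contradicting the assumption that $P_i\not\subseteq P$ for every $P\in C$ (taken with $P=P_i$). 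With that observation inserted, your direct computation of $S\cap P_i$ goes through. For comparison, the paper's proof of (ii) avoids this computation altogether: it observes that $\widetilde{x}S$ is a nonzero prime of $S$ contained in $S\cap P_i$, that $P_i\in\ass(B/\widetilde{x}B)$ since $P_i$ is a height one prime containing $\widetilde{x}$, and that the PN-subring property then gives $\height(P_i\cap S)\le 1$, forcing $S\cap P_i=\widetilde{x}S$.
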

\begin{proof}
    The proof closely follows the proof of Lemma 2.10 in \cite{Bonat}. Note that $M \not\in C$ because $W$ contains non-maximal prime ideals and depth$(B) \geq 2$. Moreover, since $R$ is a PN-subring, $|R|<|B/M|$ and so $|C|<|B/M|$. For each $i=1,\dots,s$, apply Lemma $\ref{coset avoidance}$ with $D=\{0\}$ to find $x_i \in P_i$ such that $x_i \notin P$ for every $P \in C$. Define $x=\Pi_{i=1}^s x_i$ and note that $x \in \Pi_{i=1}^s P_i$ and $x \not\in \bigcup_{P \in C}P$. 

    Fix $P \in C$ and let $t,t' \in B$. If $x(1+t)+P=x(1+t')+P$ as elements of $T/P$, then $t-t' \in P$ since $x \not\in P$. Thus, $t+P=t'+P$, and it follows that if $t+P \neq t'+P$, then $x(1+t)+P\neq x(1+t')+P$. Let $D_{(P)}$ be a full set of coset representatives for the cosets $t + P$ that make $x(1 + t) + P$ algebraic over $R/(P \cap R)$, and note that $|D_{(P)}| \leq |R|<|B/M|$. Let $D = \bigcup_{P \in C} D_{(P)}$ and apply Lemma $\ref{coset avoidance}$ to find $\alpha \in M$ such that $x(1 + \alpha) + P \in B/P$ is transcendental over $R/(P \cap R)$ for every $P \in C$. Define $\widetilde{x}=x(1 + \alpha)$. By Lemma $\ref{adjoining transcendetals}$, $S=R[\widetilde{x}]_{(R[\widetilde{x}] \cap M)}$ is a PN-subring of $B$, $|S| = |R|$, prime elements in $R$ are prime in $S$, and if $Q \in W$ with $R \cap Q = (0)$, then $S \cap Q = (0)$. Note that $\widetilde{x} \not\in \bigcup_{Q \in W}Q$ because $1 + \alpha$ is a unit in $B$ and $x \not\in \bigcup_{Q \in W}Q$. 

    Fix $i=1,\dots,s$ and note that $\widetilde{x}S \subseteq S \cap P_i$. Since $R$ is a domain and $\widetilde{x}$ is transcendental over $R$, $\widetilde{x}S$ is a prime ideal of $S$. Now, $P_i$ is a height one prime ideal of $B$ containing $\widetilde{x}$. So, $P_i \in \ass(B/\widetilde{x}B)$. Since $S$ is a PN-subring of $B$, it follows that $\height(P_i \cap S)=1$. Thus, $S \cap P_i=\widetilde{x}S$.
\end{proof}
We now show that given a saturated embedding $\psi: X \longrightarrow \spec(B)$, there exists a PN-subring $R$ of $B$ such that $\psi(x) \cap R$ is principal for every height one node $x \in X$ and the inclusion relations of height one nodes of $X$ hold in $R$.

\begin{lemma}\label{PN subring with the right height ones}
Let \(X\) be a finite poset of dimension \(n \geq 2\) with a unique minimal node and a unique maximal node. Let \( (B, M) \) be a local domain containing $\Q$ with $B/M$ uncountable and depth$(B)\geq2$. Suppose that \( \psi: X \to \operatorname{Spec}(B) \) a dimension-preserving saturated embedding such that if $x \in X$ with $\height(x)=1$ and $Q \in \spec(B)$ with $\psi(x) \subsetneq Q$, then $\text{depth}(B_Q) \geq 2$. Denote by \(\{x_1, \dots, x_m\}\) the set of height one elements in \(X\). Then, there exists a countable PN-subring \(R\) of \(B\) such that:
    \begin{enumerate}[(i)]
        \item There exist distinct nonzero elements $\widetilde{x_1},\dots,\widetilde{x_m}$ of $R$ such that $\psi(x_i) \cap R=\widetilde{x_i}R$ for every $i=1,\dots,m$ and,
        \item\label{condition 2 pn subring} If $x,y \in X$ with $\height(x)=1$, then $x \leq y$ if and only if $\psi(x) \cap R \subseteq \psi(y) \cap R$.
    \end{enumerate}
\end{lemma}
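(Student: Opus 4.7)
The plan is to build $R$ iteratively as the last term $R_m$ in an ascending chain $\mathbb{Q} = R_0 \subseteq R_1 \subseteq \cdots \subseteq R_m$ of countable PN-subrings of $B$, where each $R_i$ is obtained from $R_{i-1}$ by one application of Lemma~\ref{bonet's adjoining}. At step $i$ I would take $s=1$, $P_1=\psi(x_i)$, and
\[
W_i := \{\psi(y) : y\in X,\ x_i \not\leq y\},
\]
a finite family of nonmaximal prime ideals of $B$ (the unique maximum of $X$ dominates the height one node $x_i$ and is therefore excluded from $W_i$). The lemma then produces a nonzero element $\widetilde{x_i}\in\psi(x_i)$ that avoids every ideal in $W_i$ and yields $R_i = R_{i-1}[\widetilde{x_i}]_{(M\cap R_{i-1}[\widetilde{x_i}])}$ as a countable PN-subring of $B$ satisfying $R_i\cap\psi(x_i)=\widetilde{x_i}R_i$, with $\widetilde{x_i}$ prime in $R_i$ and all previously adjoined $\widetilde{x_j}$ remaining prime.

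The nontrivial input is verifying the hypothesis $\psi(x_i)\not\subseteq P$ for every $P\in W_i\cup\{P\in\operatorname{Ass}(B/rB) : r\in R_{i-1}\}$. For $P\in W_i$ this is immediate because $\psi$ is an order embedding. For $P\in\operatorname{Ass}(B/rB)$ with nonzero $r\in R_{i-1}$, the element $r$ is regular in the domain $B$, so $\operatorname{depth}(B_P)=1$; the depth hypothesis in the statement of the theorem forces $\operatorname{depth}(B_P)\geq 2$ whenever $\psi(x_i)\subsetneq P$, ruling out strict containment, while equality $P=\psi(x_i)$ would require $r\in\psi(x_i)\cap R_{i-1}$. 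To close this last case I would carry along the inductive invariant $R_{i-1}\cap\psi(x_j)=0$ for all $j\geq i$, which then forces $r=0$ and hence $P=(0)\neq\psi(x_i)$. (The case $r=0$ gives only the associated prime $(0)$, which cannot contain the height one ideal $\psi(x_i)$.)

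Maintaining the two invariants $R_i\cap\psi(x_j)=\widetilde{x_j}R_i$ for $j<i$ and $R_i\cap\psi(x_j)=0$ for $j>i$ across iterations is the main bookkeeping. For $j>i$, $\psi(x_j)\in W_i$ and $R_{i-1}\cap\psi(x_j)=0$, so conclusion (iii) of Lemma~\ref{bonet's adjoining} preserves the vanishing. For $j<i$, the transcendence of $\widetilde{x_i}+\psi(x_j)$ over $R_{i-1}/(R_{i-1}\cap\psi(x_j))=R_{i-1}/\widetilde{x_j}R_{i-1}$ combined with a short polynomial comparison yields $R_{i-1}[\widetilde{x_i}]\cap\psi(x_j)=\widetilde{x_j}R_{i-1}[\widetilde{x_i}]$, which survives localization at $M\cap R_{i-1}[\widetilde{x_i}]$. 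Consequently $R=R_m$ satisfies $\psi(x_i)\cap R=\widetilde{x_i}R$ for every $i=1,\ldots,m$, giving distinct nonzero generators as required by condition (i).

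Condition (ii) then drops out: if $x_i\leq y$ then $\psi(x_i)\subseteq\psi(y)$ and the intersection containment is automatic; conversely, $\psi(x_i)\cap R\subseteq\psi(y)\cap R$ puts $\widetilde{x_i}\in\psi(y)$, but $\widetilde{x_i}$ was chosen to avoid every $\psi(y')$ with $x_i\not\leq y'$, so $x_i\leq y$. The delicate point throughout is the depth hypothesis on $B$, which is exactly what prevents embedded primes of $B/rB$ from containing $\psi(x_i)$ and therefore keeps Lemma~\ref{bonet's adjoining} applicable at every stage of the induction.
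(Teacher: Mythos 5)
Your proposal is correct and follows essentially the same route as the paper: the same iterative application of Lemma~\ref{bonet's adjoining} with $W_i=\{\psi(y): x_i\not\leq y\}$, the same use of the depth hypothesis to exclude associated primes of $B/rB$, and the same two invariants carried through the induction. The only cosmetic difference is that for $j<i$ you re-derive $R_i\cap\psi(x_j)=\widetilde{x_j}R_i$ by a transcendence/polynomial comparison, whereas the paper gets it from $\psi(x_j)\in\ass(B/\widetilde{x_j}B)$ together with the PN-subring height bound; both work.
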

\begin{proof}

 We define a family of PN-subrings recursively as follows. Let \( R_0 = \mathbb{Q} \) and note that $R_0$ is a PN-subring of $B$. Define $W_1= \{\psi(x) \in \operatorname{Spec}(B) \mid \psi(x_1) \not\subseteq \psi(x)\} \), and set \( C_1 = \{ P \in \operatorname{Spec}(B) \mid P \in \operatorname{Ass}(B / rB) \text{ for some } r \in R_0 \} \cup W_1 = \{(0)\}  \cup W_1 \). Observe that since $X$ is finite, $W_1$ is a finite set, $M \not\in C_1$, 
 %Moreover, for \( j = 1, \dots, m \), the containment \( \psi(x_1) \subseteq \psi(x_j) \) holds if and only if \( j = 1 \). 
 and \( \psi(x_1) \not\subseteq P \) for every \( P \in C_1 \). Since $\psi$ is a dimension-preserving saturated embedding, $\psi(x_1)$ is a height one prime ideal of $B$. By Lemma \(\ref{bonet's adjoining}\), there exists \( \widetilde{x_1} \in \psi(x_1) \) such that \( \widetilde{x_1} \not\in \bigcup_{Q \in W_1} Q \), and the ring \( R_1 = R_0[\widetilde{x_1}]_{(R_0[\widetilde{x_1}] \cap M)} \) is a countable PN-subring of $B$ satisfying the conditions: \( R_1 \cap \psi(x_1) = \widetilde{x_1} R_1 \),  prime elements in \( R_0 \) remain prime in \( R_1 \), and if $Q \in W_1$ with $R_0 \cap Q = (0)$ then $R_1 \cap Q = (0)$. In particular, if $i \geq 2$ then $\psi(x_i) \in W_1$ and $R_0 \cap \psi(x_i) = (0)$ and so $R_1 \cap \psi(x_i) = (0)$.

Now, define $W_2= \{\psi(x) \in \operatorname{Spec}(B) \mid \psi(x_2) \not\subseteq \psi(x)\} \) and 
$ C_2 = \{ P \in \operatorname{Spec}(B) \mid P \in \operatorname{Ass}(B/rB)\text{ for some } r \in R_1 \} \cup W_2.$ Once again, \( W_2 \) is a finite set of nonmaximal prime ideals of $B$. We claim that \( \psi(x_2) \not\subseteq P \) for every \( P \in C_2 \). By definition, $\psi(x_2) \not\subseteq P$ for every $P \in W_2$. Suppose that \( \psi(x_2) \subseteq Q \) for some \( Q \in \operatorname{Ass}(B/rB) \) with \( r \in R_1 \). Since \( \psi(x_2) \cap R_1 = (0) \) and $r \in R \cap Q,$ $\psi(x_2) \neq Q$. By hypothesis, $\psi(x_2) \subsetneq Q$ implies that \( \operatorname{depth}(B_Q) \geq 2 \). However, since \( Q \in \operatorname{Ass}(B/rB) \), we have \( QB_Q \in \operatorname{Ass}(B_Q / rB_Q) \) and \( \operatorname{depth}(B_Q) = 1 \), a contradiction. Thus, we conclude that \( \psi(x_2) \not\subseteq P \) for every \( P \in C_2 \). By Lemma \(\ref{bonet's adjoining}\), there exists an element \( \widetilde{x_2} \in \psi(x_2) \) such that \( \widetilde{x_2} \not\in \bigcup_{Q \in  W_2} Q \) and the ring  $R_2 = R_1[\widetilde{x_2}]_{(R_1[\widetilde{x_2}] \cap M)}$ is a countable PN-subring satisfying the conditions \( R_2 \cap \psi(x_2) = \widetilde{x_2} R_2 \),  prime elements in \( R_1 \) remain prime in \( R_2 \), and if $Q \in W_2$ with $R_1 \cap Q = (0)$ then $R_2 \cap Q = (0).$ In particular, if $i \geq 3$ then $\psi(x_i) \in W_2$ and $R_1 \cap \psi(x_i) = (0)$ and so $R_2 \cap \psi(x_i) = (0)$. Now, $\psi(x_1) \in W_2$ and so $\widetilde{x_2} \not\in \psi(x_1)$, and it follows that $\widetilde{x_1} \neq \widetilde{x_2}$. We claim that $R_2 \cap \psi(x_1) = \widetilde{x_1}R_2.$ Note that $\widetilde{x_1}R_2 \subseteq R_2 \cap \psi(x_1)$. Since $R_1$ is a domain and $\widetilde{x_1}$ is transcendental over $R_1$, $\widetilde{x_1}R_2$ is a prime ideal of $R_2$. Since $\psi(x_1)$ is a height one prime ideal of $B$ containing $\widetilde{x_1}$, we have that $\psi(x_1) \in \ass(B/\widetilde{x_1}B).$ As $R_2$ is a PN-subring of $B$, it follows that ht$(R_2 \cap \psi(x_1)) = 1$ and so $R_2 \cap \psi(x_1) = \widetilde{x_1}R_2.$

We continue this process by defining  $W_i= \{\psi(x) \in \operatorname{Spec}(B) \mid \psi(x_i) \not\subseteq \psi(x)\},$ and  setting \(C_i = \{ P \in \operatorname{Spec}(B) \mid P \in \operatorname{Ass}(B/rB) \text{ for some } r \in R_{i-1} \} \cup W_i\) for $i=3,\dots,m$. At each step, an element \( \widetilde{x_i} \in \psi(x_i) \) is chosen such that \( \widetilde{x_i} \not\in \bigcup_{Q \in W_i} Q \). The subsequent countable PN-subring is then defined as $R_{i}=R_{i-1}[\widetilde{x_{i}}]_{(R_{i-1}[\widetilde{x_{i}}] \cap M)}$. This ensures that for each $i$, \( R_i \cap \psi(x_i) = \widetilde{x_i} R_i \), prime elements of \( R_{i-1} \) remain prime in $R_i$, and $R_{i} \cap \psi(x_{j}) = (0)$ for $j > i$. Moreover, $R_i \cap \psi(x_k) = \widetilde{x_k}R_i$ for all $k < i$ and $\widetilde{x_1}, \widetilde{x_2}, \ldots ,\widetilde{x_i}$ are distinct. This iterative construction produces an ascending chain of PN-subrings of $B$ $$R_0 \subseteq R_1 \subseteq R_2 \subseteq \dots \subseteq R_m.$$

Define \( R = R_m \). By construction, $\widetilde{x_1}, \widetilde{x_2}, \ldots ,\widetilde{x_m}$ are distinct and \( \psi(x_i) \cap R = \widetilde{x_i}R \) for all \( i = 1, \dots, m \). Now, consider \( x, y \in X \) with \( \operatorname{ht}(x) = 1 \). If \( x \leq y \), then \( \psi(x) \subseteq \psi(y) \), as $\psi$ is a saturated embedding, implying \( \psi(x) \cap R \subseteq \psi(y) \cap R \). To establish the ``if" direction of condition (ii), we proceed by showing its contrapositive. Let $x=x_j$ for some $j=1,\dots,m$ and suppose $x_j \not\leq y$. Then, $\psi(x_j) \not\subseteq \psi(y)$ and so $\psi(y) \in W_j$. Since \( \widetilde{x_j} \) was selected to satisfy \( \widetilde{x_j} \not\in \bigcup_{Q \in W_j} Q \), it follows that \( \widetilde{x_j}R = \psi(x_j) \cap R \not\subseteq \psi(y) \cap R \).
\end{proof}
%\begin{remark}\label{remark about avoiding prime ideals for height ones}
%In the setting of Theorem \ref{PN subring with the right height ones}, suppose that $1 \leq i \leq m$ and $y \in X$ with $x_i \not\leq y$. Then $\widetilde{x_i}$ was chosen so that $\widetilde{x_i} \not\in \psi(y).$ 

%Note that by condition $\ref{condition 2 pn subring}$ and $\psi$ being a saturated embedding, we have $\widetilde{x_i} \notin \psi(y)$ for every $i=1,\dots,m$ and for every $y \in X$ satisfying $x \not\le y$.

%\end{remark}
%\subsection{UFD Construction}
From this point onward, the construction of our desired local UFD $A$ closely follows the construction presented in Section 4 in~\cite{Colbert}. The key distinction between the UFD theorem from \cite{Colbert} (Theorem 4.13) and the main result of this section, Theorem~$\ref{UFD Theorem together with a saturated embedding}$, lies in the initial PN-subring used: in \cite{Colbert}, the local UFD is constructed as an infinite union of PN-subrings starting with the subring $\mathbb{Q}$, whereas our construction begins instead with the ring obtained in Lemma~$\ref{PN subring with the right height ones}$. Choosing this ring as our starting point ensures that the inclusion relations for the height one nodes of $X$ are properly respected within $A$.

The next result from \cite{Colbert} shows that an infinite union of PN-subrings behaves well. We begin with a definition.
\begin{definition}[Definition 4.9, \cite{Colbert}]
    Let $\Psi$ be a well-ordered set, and let $\alpha \in  \Psi$. Define $\gamma(\alpha) = \text{sup}\{\beta \in \Psi \mid \beta < \alpha\}$.
\end{definition}

\begin{lemma}[Lemma 4.10, \cite{Colbert}]\label{union of PN subrings}
    Let $(B,M)$ be a local domain with $B/M$ uncountable, and let $R_0$ be a PN-subring of $B$. Let $\Omega$ be a well-ordered set with least element 0, and assume that for every $\alpha \in \Omega$, $|\{\beta \in \Psi \mid \beta < \alpha\}| < |B/M|$. Suppose $\{R_\alpha \mid \alpha \in \Omega\}$ is an ascending collection of rings such that if $\gamma(\alpha)=\alpha$, then $R_\alpha=\bigcup_{\beta<\alpha} R_\beta$ while if $\gamma(\alpha)<\alpha$, $R_\alpha$ is a PN-subring of $B$ with $R_{\gamma(\alpha)} \subseteq R_\alpha$ and prime elements in $R_{\gamma(\alpha)}$ are prime in $R_\alpha$.

    Then $S = \bigcup_{\alpha \in \Omega} R_\alpha$ satisfies all conditions to be a PN-subring of $B$ except for possibly the condition that $|S| < |B/M|$. Moreover, $|S| \leq \text{sup}(|R_0|,|\Omega|)$ and elements that are prime in some $R_\alpha$ are prime in $S$.
\end{lemma}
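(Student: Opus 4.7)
The plan is to verify each defining condition of a PN-subring for $S=\bigcup_{\alpha\in\Omega}R_\alpha$ in turn (other than the possibly-failing cardinality bound), using the transfinite structure of $\Omega$ to reduce questions about $S$ to the corresponding questions about a single PN-subring $R_\alpha$ with $\gamma(\alpha)<\alpha$. The key preliminary observation I would record is that any finite subset of $S$ lies in some $R_\alpha$ with $\gamma(\alpha)<\alpha$: take the least $\alpha\in\Omega$ such that $R_\alpha$ contains the given set, and note that if $\gamma(\alpha)=\alpha$, then $R_\alpha=\bigcup_{\beta<\alpha}R_\beta$ would already place the set in some $R_\beta$ with $\beta<\alpha$, contradicting minimality. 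From this it is immediate that $S$ is an infinite domain that is quasi-local with maximal ideal $M\cap S$, since any element of $S\setminus(M\cap S)$ lies in $R_\alpha\setminus(M\cap R_\alpha)$ for the appropriate $R_\alpha$ and so is a unit in $R_\alpha$, hence in $S$.

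The heart of the proof is to check that $S$ is a UFD. I would first prove by transfinite induction on $\beta\geq\alpha$ that if $p\in R_\alpha$ is prime in $R_\alpha$, then $p$ is prime in $R_\beta$. The successor case is exactly the hypothesis that primes in $R_{\gamma(\beta)}$ remain prime in $R_\beta$. At a limit stage $\beta$, any relation $p\mid ab$ in $R_\beta$ supplies a witness $c$ with $pc=ab$, and the three elements $a,b,c$ lie together in some $R_\delta$ with $\alpha\leq\delta<\beta$, so the induction hypothesis applied to $R_\delta$ gives $p\mid a$ or $p\mid b$ there and thus in $R_\beta$. The identical finite-witness argument lifts primality of $p$ from every $R_\alpha$ to $S$. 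To conclude that $S$ is a UFD, take any nonzero non-unit $s\in S$; by the preliminary reduction $s$ lies in some PN-subring $R_\alpha$, where it factors as a product of primes of $R_\alpha$, each of which is then prime in $S$. A domain in which every nonzero non-unit is a product of primes is a UFD.

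For the remaining PN-subring condition, suppose $b\in B$ and $P\in\ass(B/bB)$; I argue by contradiction that $\height(P\cap S)\leq 1$. If $\height(P\cap S)\geq 2$, there is a chain $Q_0\subsetneq Q_1\subsetneq P\cap S$ in $\spec(S)$, and I choose $q_1\in Q_1\setminus Q_0$ and $q_2\in(P\cap S)\setminus Q_1$. Both elements lie in some PN-subring $R_\alpha$, and contracting the chain yields prime ideals $Q_0\cap R_\alpha\subsetneq Q_1\cap R_\alpha\subsetneq P\cap R_\alpha$ of $R_\alpha$, the strictness witnessed by $q_1$ and $q_2$. This forces $\height(P\cap R_\alpha)\geq 2$, contradicting the PN-subring property of $R_\alpha$ applied to the same $b$. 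Finally, the bound $|S|\leq\sup(|R_0|,|\Omega|)$ follows by a routine transfinite induction controlling the growth of $|R_\alpha|$ at successor and limit stages, together with the observation that $|S|=|\bigcup_{\alpha\in\Omega}R_\alpha|$ is dominated by $|\Omega|\cdot\sup_\alpha|R_\alpha|$. The step I expect to be most delicate is the limit-ordinal case of prime preservation: it rests on the observation that a single divisibility relation and its witness involve only three elements, and so live at some level strictly below the limit, which is exactly what lets primality propagate through arbitrarily deep unions.
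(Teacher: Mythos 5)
This lemma is quoted from \cite{Colbert} and the paper gives no proof of it, so there is nothing internal to compare against; your argument is, however, the standard one for ``union lemmas'' of this type (going back to Heitmann's Lemma 6 in \cite{HeitmannUFD}), and the substantive parts are correct. The reduction of any finite subset of $S$ to a single PN-subring stage, the quasi-locality of $(S, M\cap S)$, the transfinite preservation of primes (with the finite-witness trick at limit stages) leading to ``every nonzero nonunit is a product of primes, hence $S$ is a UFD,'' and the contraction-of-chains argument showing $\height(P\cap S)\le 1$ for $P\in\ass(B/bB)$ are all sound. (One cosmetic point: your preliminary claim should allow the stage to be $R_0$ itself, which is a PN-subring by hypothesis even though $\gamma(0)\not<0$.)

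The one genuine gap is the cardinality bound $|S|\le\sup(|R_0|,|\Omega|)$. You assert it follows ``by a routine transfinite induction controlling the growth of $|R_\alpha|$ at successor and limit stages,'' but the hypotheses as stated give you nothing to control the successor stages with: when $\gamma(\alpha)<\alpha$ the lemma only requires $R_\alpha$ to be a PN-subring containing $R_{\gamma(\alpha)}$, so $|R_\alpha|$ could jump arbitrarily (subject only to $|R_\alpha|<|B/M|$), and then $\sup_\alpha|R_\alpha|$ need not be bounded by $\sup(|R_0|,|\Omega|)$. The induction you describe requires the additional hypothesis $|R_\alpha|=|R_{\gamma(\alpha)}|$ at successor stages; this is how the lemma is actually applied in the present paper (the successor rings produced by Lemma \ref{ideals are closed up} satisfy $|S|=|R|$), and with it the induction is indeed routine, since at limit stages $|R_\alpha|\le|\{\beta\mid\beta<\alpha\}|\cdot\sup_{\beta<\alpha}|R_\beta|\le\sup(|R_0|,|\Omega|)$ because each $R_\beta$ is infinite. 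As written, though, your proof of the cardinality claim does not close, and you should either import that missing hypothesis explicitly or observe that the bound cannot be derived from the statement as transcribed.
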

To preserve dimension, we construct our UFD $A$ to be a subring of the local domain $B$ obtained from Theorem $\ref{local domain with desired properties}$ such that $\widehat{A} \cong \widehat{B}$. The next proposition gives sufficient conditions for this.
\begin{prop}[Proposition 2.6, \cite{gluingpaper}]\label{completion proving machine}
    Let $(B,M)$ be a local ring, and let $\widehat{B}=T$. Suppose $(S,S\cap M)$ is a quasi-local subring of $B$ such that the map $S\longrightarrow B/M^2$ is onto and $IB\cap S=I$ for every finitely generated ideal $I$ of $S$. Then $S$ is Noetherian and $\widehat{S} = T$. Moreover, if $B/M$ is uncountable and $|B| = |B/M|$, then $S/(S \cap M)$ is uncountable, and $|S| = |S/(S \cap M)|$.
\end{prop}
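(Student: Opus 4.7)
The plan is to show first that $N := S \cap M$ satisfies $NB = M$, next to bootstrap this to isomorphisms $S/N^t \cong B/M^t$ for every $t \ge 1$, and finally to exploit the hypothesis $IB \cap S = I$ to conclude that $S$ is Noetherian with $\widehat{S} = T$.

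Since $\widehat{B} = T$, the ring $B$ is Noetherian, so $M$ is finitely generated, say $M = (m_1, \ldots, m_k)B$. Using that $S \to B/M^2$ is onto, I would lift each $m_i$ to $s_i \in S$ with $s_i - m_i \in M^2$; automatically $s_i \in M \cap S = N$. Then $M = (s_1, \ldots, s_k)B + M^2$, and Nakayama's lemma applied to the finitely generated $B$-module $M$ gives $M = (s_1, \ldots, s_k)B \subseteq NB \subseteq M$, so $NB = M$. Applying the hypothesis to the finitely generated ideal $J = (s_1, \ldots, s_k)S$ yields $N = M \cap S = NB \cap S = J$, so $N$ is finitely generated. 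Consequently $N^t B = (NB)^t = M^t$, and the hypothesis yields $N^t = M^t \cap S$ for every $t$.

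The analytic core is the surjectivity of $S \to B/M^t$ for every $t$, which I would prove by induction on $t$. Given $b \in B$ and an inductively produced $s_0 \in S$ with $b - s_0 \in M^{t-1}$, I would expand $b - s_0 = \sum_\alpha c_\alpha m_{i_1}\cdots m_{i_{t-1}}$ with $c_\alpha \in B$, apply the $t = 2$ case to lift each $c_\alpha$ to $u_\alpha \in S$ with $u_\alpha - c_\alpha \in M$, and then substitute $s_{i_j}$ for $m_{i_j}$. The resulting discrepancy decomposes into terms of the form $(c_\alpha - u_\alpha)m_{i_1}\cdots m_{i_{t-1}} \in M \cdot M^{t-1}$ and $u_\alpha(m_{i_1}\cdots m_{i_{t-1}} - s_{i_1}\cdots s_{i_{t-1}})$, where expanding with $s_i - m_i \in M^2$ places the latter factor in $M^t$. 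This simultaneous lifting of coefficients and substitution of generators, while keeping the error modulo $M^t$ under control, is the main technical obstacle in the argument.

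With surjectivity established and kernel equal to $N^t$, the isomorphism $S/N^t \cong B/M^t$ follows. For Noetherianity I would employ a descent trick: for any ideal $I$ of $S$, the extension $IB$ is finitely generated over $B$, so finitely many elements of $I$ generate it; letting $I_0$ denote their $S$-span, one has $I_0 B = IB$, and the hypothesis yields $I \subseteq IB \cap S = I_0 B \cap S = I_0 \subseteq I$, so $I = I_0$ is finitely generated. Passing to the inverse limit then gives $\widehat{S} \cong \varprojlim_t B/M^t = T$. Finally, $S/N \cong B/M$ transports uncountability from $B/M$, while the chain $|S| \le |B| = |B/M| = |S/N| \le |S|$ forces $|S| = |S/N|$.
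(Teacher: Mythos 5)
Your proof is correct and follows essentially the same route as the source of this result (the paper only cites it, from Proposition 2.6 of the gluing paper, which in turn goes back to Heitmann's argument): lift generators of $M$ into $N=S\cap M$ modulo $M^2$, apply Nakayama to get $NB=M$ and hence $N^tB=M^t$, prove surjectivity of $S\to B/M^t$ by the inductive coefficient-lifting/substitution argument, identify the kernel as $N^t$ via the hypothesis $IB\cap S=I$, and use the $I_0$ descent trick for Noetherianity. All steps check out, including the cardinality chain at the end.
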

In view of Proposition \ref{completion proving machine}, we use the following lemma to show that $\widehat{A} \cong \widehat{B}$. In addition, similar to the approach in the proof of Theorem 4.13 from \cite{Colbert}, we use the lemma to adjoin generating sets of carefully selected prime ideals of $B$ to $A$.
%is essential to ensure that the ring $B$ and our constructed UFD $A$ share the same completion. Similarly to the approach from Theorem 4.13 in~\cite{Colbert}, we exploit the faithfully flatness of $T$ as a $B-$module to adjoin carefully selected generating sets of prime ideals. 

\begin{lemma}[Lemma 4.11, \cite{Colbert}]\label{ideals are closed up}
    Let $(B,M)$ be a local domain with $\text{depth}(B) \geq 2$ and with $B/M$ uncountable, and let $(R, M\cap R)$ be a PN-subring of $B$. Suppose $Q$ is a prime ideal of $B$ such that if $P \in \spec(B)$ with $P \in \ass(B/bB)$ for some $b\in B$, then $Q \not\subseteq P$. Let $u\in B$. Then there is a PN-subring $(S,M\cap S)$ of $B$ such that $R \subseteq S$, $|S| = |R|$, prime elements in $R$ are prime in $S$, $S$ contains an element of the coset $u + M^2$, $S$ contains a generating set for $Q$, and, for every finitely generated ideal $I$ of $S$, $IB \cap S = I$.
\end{lemma}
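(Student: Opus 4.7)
The plan is to build $S$ as a countable ascending union of PN-subrings of $B$, interleaving three kinds of steps: adjoining a representative of the coset $u + M^2$, adjoining a generating set of $Q$, and closing up under the condition $IB\cap S = I$ for finitely generated ideals $I$.

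The first two steps are direct applications of coset avoidance and the adjunction lemmas already in the excerpt. Set $C = \{P \in \spec(B) : P \in \ass(B/bB) \text{ for some } b \in R\}$; this has cardinality at most $|R| < |B/M|$. Because $B$ is a domain of depth at least two, $M \notin \ass(B/bB)$ for any $b \in R$, so $M^2 \not\subseteq P$ for every $P \in C$. Lemma \ref{coset avoidance} applied with $I = M^2$ produces $\alpha \in M^2$ such that $u+\alpha+P$ is transcendental over $R/(R\cap P)$ for each $P \in C$, and Lemma \ref{adjoining transcendetals} then yields a PN-subring $R' = R[u+\alpha]_{(R[u+\alpha]\cap M)}$ with $|R'| = |R|$ that contains a representative of $u + M^2$ and in which primes of $R$ remain prime. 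Next, since $B$ is Noetherian, write $Q = (q_1,\dots,q_n)$. The hypothesis on $Q$ combined with $M \notin C$ forces $MQ \not\subseteq P$ for every $P \in C$: for $q \in Q \setminus P$ and $m \in M \setminus P$ one has $mq \in MQ \setminus P$. Iteratively applying Lemma \ref{bonet's adjoining} with $I = MQ$ then produces modified generators $q_i^* = q_i + m_i$ with $m_i \in MQ$, each adjunction yielding a new PN-subring. Since $q_i^* \equiv q_i \pmod{MQ}$, Nakayama's lemma gives $(q_1^*,\dots,q_n^*) = Q$, so the resulting PN-subring $R'' \supseteq R'$ contains a generating set for $Q$.

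The main work is the third step. I would build an $\omega$-chain $R'' = T_0 \subseteq T_1 \subseteq \cdots$ where $T_{j+1}$ is produced from $T_j$ as follows. Enumerate all finite tuples $(c,a_1,\dots,a_k) \in T_j^{k+1}$ with $c \in (a_1,\dots,a_k)B$. For each such tuple, fix a witness $(b_1,\dots,b_k) \in B^k$ satisfying $c = \sum a_i b_i$, and then modify it along a syzygy: replace $(b_1,\dots,b_k)$ by $(b_1 + z_1,\dots,b_k + z_k)$ where $(z_1,\dots,z_k)$ is a syzygy of $(a_1,\dots,a_k)$ selected by coset avoidance on the syzygy module so that the shifted $b_i + z_i$ are transcendental over the current ring modulo every $P$ in the updated $C$. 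Adjoining these shifted witnesses via Lemma \ref{adjoining transcendetals} preserves PN-subring status. Setting $S = \bigcup_j T_j$ and invoking Lemma \ref{union of PN subrings} produces an $S$ with $|S| \leq \aleph_0 \cdot |R| = |R| < |B/M|$; by construction, every relation $c = \sum a_i b_i$ in $B$ with $c, a_i \in S$ is witnessed inside $S$, giving $IB\cap S = I$ for every finitely generated ideal $I$ of $S$.

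The hardest part of the plan will be the syzygy-shifting argument in the third step. Two delicate points arise: first, that the syzygy module of each tuple $(a_1,\dots,a_k)$ projects non-trivially modulo every $P \in C$ so that coset avoidance is actually available there, which rests on a careful analysis of first syzygies in the Noetherian domain $B$; second, the bookkeeping needed to handle the fact that each enlargement $T_j \subsetneq T_{j+1}$ produces new tuples and new associated primes in $C$, requiring a diagonal enumeration so that every relation that ever appears in $S$ is handled at some finite stage while keeping $|T_{j+1}| = |T_j|$.
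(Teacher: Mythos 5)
Your overall architecture (three kinds of adjunction steps, assembled by a countable union and Lemma \ref{union of PN subrings}) matches the standard proof of this lemma, and your first two steps are essentially correct: the coset $u+M^2$ is handled exactly by Lemma \ref{coset avoidance} plus Lemma \ref{adjoining transcendetals} (using $M^2\not\subseteq P$ for $P\in C$ since $\mathrm{depth}(B)\ge 2$), and the generating set for $Q$ is obtained by perturbing each $q_i$ inside $MQ$ and invoking Nakayama, using $MQ\not\subseteq P$ exactly as you argue. (Minor slip: the tool for the $Q$ step is again Lemma \ref{coset avoidance} together with Lemma \ref{adjoining transcendetals}, not Lemma \ref{bonet's adjoining}, which concerns products of height one primes.) Your diagonal bookkeeping for the closure step is also the right way to organize the union.

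The genuine gap is in the mechanism of your third step. You propose to shift an entire witness tuple $(b_1,\dots,b_k)$ by a syzygy of $(a_1,\dots,a_k)$ so that every $b_i+z_i$ becomes transcendental modulo every $P\in C$. This cannot work. For a principal ideal ($k=1$) the syzygy module of $(a_1)$ in the domain $B$ is zero, so the witness $b_1=c/a_1$ is forced; yet the conclusion $a_1B\cap S=a_1S$ still requires $c/a_1\in S$, and this element is in general algebraic (even rational) over $R$ modulo primes of $C$, so no transcendence-based adjunction is available. Even for $k\ge 2$ the available shifts are essentially Koszul relations such as $(b_1+a_2t,\,b_2-a_1t)$, whose images vanish modulo any $P\in C$ containing $a_1$ and $a_2$ — and such $P$ genuinely occur, e.g.\ when $a_2$ generates $P\cap R$ — so coset avoidance is not available for all $P$ simultaneously, and one cannot make all $k$ entries transcendental at once in any case. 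The actual proof instead inducts on $k$: it shifts only $b_1\mapsto b_1+a_2t$, arranging transcendence only modulo those $P\in C$ with $a_2\notin P$, adjoins that single element, and reduces to the ideal $(a_2,\dots,a_k)$; the base case $k=1$ is a separate ``fraction adjunction'' lemma showing that $R[c/a_1]$, suitably localized, is still a PN-subring. That lemma is proved not by transcendence but by a direct verification of the condition $\height(P\cap S)\le 1$, reducing to the case where $c$ and $a_1$ have no common factor in the UFD $R$ and analyzing $P\cap R[c/a_1]$ for $P\in\ass(B/bB)$. Without this ingredient your plan cannot be completed, and the difficulty you flag as ``a careful analysis of first syzygies'' is in fact an obstruction rather than a technicality.
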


Our next goal is to show that for a poset $X$ satisfying our necessary conditions (see the conditions in Theorem~$\ref{main theorem of characterization of embeddings of UFDs}$), there exists a local UFD $A$ and a dimension-preserving saturated embedding $\phi: X \longrightarrow \spec(A)$. In Theorem~$\ref{UFD theorem}$ we prove that, given a local domain $B$ satisfying some mild conditions and a dimension-preservng saturated embedding from $X$ to $\Spec(B)$ also satisfying some mild conditions, there exists a local UFD $A$ with nice geometric and algebraic properties such as $\widehat{A}=\widehat{B}$ (this ensures, for example, that $\dim(A)=\dim(X)$). Then, in Theorem~$\ref{UFD Theorem together with a saturated embedding}$, we define $\phi$ and exploit properties of the prime spectrum of $A$ to show that $\phi$ is a dimension-preserving saturated embedding.
\begin{theorem}\label{UFD theorem}
   Let \(X\) be a finite poset of dimension \(n \geq 2\) with a unique minimal element and a unique maximal element. Let \( (B, M) \) be a local domain containing $\Q$ with $B/M$ uncountable, $|B|=|B/M|$, and depth$(B)\geq2$. Suppose that \( \psi: X \to \operatorname{Spec}(B) \) is a dimension-preserving saturated embedding such that if $x \in X$ with $\height(x)=1$, $Q \in \spec(B)$ with $\psi(x) \subsetneq Q$ and $b$ is an element of $B$, then $Q \not\subseteq \mathfrak{q}$ for every $\mathfrak{q} \in \ass(B/bB)$ and $\text{depth}(B_Q) \geq 2$. Denote by \(\{x_1, \dots, x_m\}\) the set of height one elements in \(X\).  Then, there exists a local UFD \((A, M \cap A)\) such that:
    \begin{enumerate}
        \item $A\subseteq B$,
        \item $\widehat{A}=\widehat{B}=T,$
        \item There exist distinct nonzero elements $\widetilde{x_1},\dots,\widetilde{x_m}$ of $A$ such that $\psi(x_i) \cap A=\widetilde{x_i}A$ for every  $i=1,\dots,m$, 
        \item If $x,y \in X$ with $\height(x)=1$, then $x \leq y$ if and only if $\psi(x) \cap A \subseteq \psi(y) \cap A$ and,
        \item The map $f: \spec(B)\longrightarrow \spec(A)$ given by $f(P)=A \cap P$ is onto and if $P \in \spec(B)$ with $\psi(x) \subsetneq P$ for $x$ a height one node of $X$, then $f(P)B=P$. In particular, if $P_1$ and $P_2$ are prime ideals of $B$ such that $\psi(x) \subsetneq P_i$ for $i=1,2$, then $f(P_1)=f(P_2)$ implies that $P_1=P_2$.  
    \end{enumerate}
    
\end{theorem}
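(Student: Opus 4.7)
The plan is to adapt the proof of Theorem 4.13 in \cite{Colbert} to our setting, with one substantive change: the initial PN-subring is no longer $\mathbb{Q}$ but the countable PN-subring $R_0 \subseteq B$ furnished by Lemma \ref{PN subring with the right height ones}. The hypotheses of that lemma are satisfied here (depth at least two and the avoidance condition on primes strictly above $\psi(x)$ for height one $x$), so $R_0$ exists, contains distinct nonzero $\widetilde{x_1}, \ldots, \widetilde{x_m}$ with $\psi(x_i) \cap R_0 = \widetilde{x_i}R_0$, and encodes the inclusion relations among the $\psi(x_i)$. Everything needed for conclusions (3) and (4) of the theorem is already present in $R_0$, provided subsequent enlargements keep prime elements prime.

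Next, I would build $A$ by transfinite recursion. Let $\Omega$ be a well-ordered index set of cardinality $c = |B|$ whose elements enumerate three kinds of data: (a) every $u \in B$, so that some element of $u + M^2$ lies in $A$, as required by Proposition \ref{completion proving machine}; (b) every prime ideal $Q$ of $B$ strictly containing some $\psi(x)$ for a height one node $x$, so that a generating set of $Q$ is adjoined to $A$; and (c) every nonzero element of $B$, so that each one acquires a prime factorization in $A$, yielding the UFD property. At each successor ordinal $\alpha$, I apply Lemma \ref{ideals are closed up} to $R_{\gamma(\alpha)}$ with the $\alpha$-th enumerated $u$ and $Q$; the avoidance hypothesis required there is exactly the hypothesis of the present theorem, so the lemma produces a PN-subring $R_\alpha$ satisfying the coset, generating-set, and closure conditions while keeping previously adjoined prime elements prime. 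At limit ordinals, I take unions and invoke Lemma \ref{union of PN subrings}. Set $A := \bigcup_{\alpha \in \Omega} R_\alpha$.

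The verification is then essentially routine. By Proposition \ref{completion proving machine}, $A$ is Noetherian with $\widehat{A} = \widehat{B} = T$, giving condition (2) and making $A \hookrightarrow B$ faithfully flat, hence $f$ is surjective via lying-over. Conditions (3) and (4) follow because each $\widetilde{x_i}$ remains prime throughout the construction, so $\widetilde{x_i}A$ is a height one prime of $A$; the faithfully flat dimension formula gives $\height(\psi(x_i) \cap A) = 1$, which together with $\widetilde{x_i}A \subseteq \psi(x_i) \cap A$ forces $\widetilde{x_i}A = \psi(x_i) \cap A$, and the inclusion relations of (4) pull back from $R_0$ through the same going-down argument. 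For condition (5), each relevant $P$ has a generating set in $A$, so $(A \cap P)B \supseteq P$; the reverse inclusion is automatic, and the injectivity assertion follows immediately from $f(P_i)B = P_i$. Finally, $A$ is a UFD because every nonzero element factors into prime elements of $A$, by the enumeration in (c) together with the preservation of primality in the recursion.

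The main obstacle is the technical bookkeeping in the transfinite step: adjoining a coset representative, a full generating set of the next prime $Q$, and enough elements for a prime factorization, all within one PN-subring without destroying the PN property or the primality of prior elements. Lemma \ref{ideals are closed up} is engineered to accomplish all of this in a single stroke, but its applicability depends on the avoidance hypothesis $Q \not\subseteq \mathfrak{q}$ for every $\mathfrak{q} \in \ass(B/bB)$. The carefully chosen hypothesis of this theorem is precisely what makes this avoidance hold for each $Q$ strictly above the image of a height one node, which is what lets the construction go through unchanged from the Colbert template.
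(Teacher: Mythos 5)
Your overall strategy coincides with the paper's: begin with the PN-subring produced by Lemma \ref{PN subring with the right height ones}, run a transfinite recursion using Lemma \ref{ideals are closed up} at successor stages and Lemma \ref{union of PN subrings} at limit stages over an enumeration of the cosets $u+M^2$ and of suitable primes $Q$, then invoke Proposition \ref{completion proving machine} to get $\widehat{A}=\widehat{B}$, and verify (3)--(5) via primality of the $\widetilde{x_i}$, the witness elements already present in $R_0$, and the adjoined generating sets. Restricting the enumerated primes to those strictly containing some $\psi(x)$ with $\height(x)=1$ (rather than all primes satisfying the avoidance condition, as the paper does) is harmless, since those are the only primes needed for condition (5).

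There is, however, one genuine error: your item (c) and the closing claim that ``$A$ is a UFD because every nonzero element factors into prime elements of $A$, by the enumeration in (c).'' Lemma \ref{ideals are closed up} has no mechanism for adjoining a prime factorization of an arbitrary element of $B$; it only adjoins a representative of one coset $u+M^2$ and a generating set of one prime $Q$ satisfying the avoidance condition. Moreover, a nonzero element of $B\setminus A$ need not, and in general cannot, factor into primes of $A$, and no such factorization is required anywhere. The UFD property of $A$ is instead built into the PN-subring framework: by Definition \ref{PNsubring} every PN-subring is a quasi-local UFD, and Lemma \ref{union of PN subrings} guarantees that the union $A=\bigcup_\alpha R_\alpha$ satisfies all conditions of being a PN-subring except possibly the cardinality bound $|A|<|B/M|$; in particular $A$ is a UFD, and elements prime in some $R_\alpha$ remain prime in $A$ (which is also what you need for (3)). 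Deleting enumeration (c) and citing Lemma \ref{union of PN subrings} for the UFD property repairs the argument; the rest of your outline tracks the paper's proof.
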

\begin{proof}
     By Lemma $\ref{PN subring with the right height ones}$, there exists a PN-subring $R$ of $B$ and distinct nonzero elements $\widetilde{x_1},\dots,\widetilde{x_m}$ of $R$ such that $\psi(x_i) \cap R=\widetilde{x_i}R$ for all $i=1,\dots, m$, and if $x,y \in X$ with $\height(x)=1$, then $x \leq y$ if and only if $\psi(x) \cap R \subseteq \psi(y) \cap R$. 

    We now closely follow the proofs of Lemma 4.12 and Theorem 4.13 in \cite{Colbert}. Set $R_0=R$. Let \( \Omega_1 = B/M^2 \) and define  
\[
\Omega_2 = \{ Q \in \operatorname{Spec}(B) \mid \text{if } b \in B \text{ and } P \in \operatorname{Ass}_B(B/bB), \text{ then } Q \not\subseteq P \}
\]  
Observe that \( |\Omega_1| = |B/M^2| = |B/M| \) and \( |\Omega_2| \leq |B| = |B/M| \). Define \( \Omega = \Omega_1 \times \Omega_2 \) and note that \( |\Omega| = |B/M| \). Well-order \( \Omega \) using an index set \( \Psi \) such that the first element of \( \Psi \) is 0 and each element of \( \Omega \) has fewer than \( |\Omega| \) predecessors. For \( \alpha \in \Psi \), denote the corresponding element in \( \Omega \) by \( (b_\alpha + M^2, Q_\alpha) \).

We recursively define a family of PN-subrings \( \{ R_\alpha \mid \alpha \in \Psi \} \). The subring \( R_0 \) has already been defined. Suppose \( \alpha \in \Psi \) and assume that \( R_\beta \) has been constructed for all \( \beta < \alpha \). If \( \gamma(\alpha) < \alpha \), define \( R_\alpha \) to be the PN-subring obtained from Lemma $\ref{ideals are closed up}$ such that \( R_{\gamma(\alpha)} \subseteq R_\alpha \), \( |R_\alpha| = |R_{\gamma(\alpha)}| \), prime elements of \( R_{\gamma(\alpha)} \) remain prime in \( R_\alpha \), \( R_\alpha \) contains an element of the coset \( b_{\gamma(\alpha)} + M^2 \), \( R_\alpha \) contains a generating set for \( Q_{\gamma(\alpha)} \), and for every finitely generated ideal \( I \) of \( R_\alpha \), we have \( I B \cap R_\alpha = I \). If \( \gamma(\alpha) = \alpha \), define \( R_\alpha = \bigcup_{\beta < \alpha} R_\beta \).  

Let \( A = \bigcup_{\alpha \in \Psi} R_\alpha \). By Lemma $\ref{union of PN subrings}$, the ring \( (A, M \cap A) \) satisfies all the conditions for being a PN-subring, except for the condition \( |A| < |B/M| \). In particular, \( A \) is a UFD and the map $A \longrightarrow B/M^2$ is onto. 

We now verify that \( I B \cap A = I \) for every finitely generated ideal \( I \) of \( A \). Let \( I = (a_1, a_2, \dots, a_n) \) be a finitely generated ideal of \( A \) and suppose \( c \in I B \cap A \). Then there exists \( \alpha \in \Psi \) such that \( c, a_1, a_2, \dots, a_n \in R_\alpha \) and for every finitely generated ideal \( J \) of \( R_\alpha \), it holds that \( J B \cap R_\alpha = J \). Taking \( J = (a_1, a_2, \dots, a_n) R_\alpha \), we get  
\[
c \in (a_1, a_2, \dots, a_n) B \cap R_\alpha = J \subseteq J A = I,
\]  
which implies \( I B \cap A = I \).  

From Proposition $\ref{completion proving machine}$, it follows that $(A, M \cap A)$ is Noetherian, and thus a local UFD, with its completion given by $\widehat{A}=\widehat{B}=T$. Since $\psi$ is a dimension-preserving saturated embedding, $\psi(x_i)$ is a height one prime ideal of $B$ for all $i = 1,2, \ldots ,m$. Now, $\psi(x_i) \cap A \neq (0)$ and so $\psi(x_i) \cap A$ is a height one prime ideal of $A$. By construction, $\widetilde{x_i}$ is a prime element of $A$ contained in $\psi(x_i)$ and so $\psi(x_i) \cap A = \widetilde{x_i}A$ for all $i = 1,2 \ldots ,m$.

Suppose $x, y \in X$ with $\height(x) = 1$ and $x \leq y$.  
Since $\psi$ is a saturated embedding, we obtain 
$\psi(x) \cap A \;\subseteq\; \psi(y) \cap A.
$ For the reverse direction, we show the contrapositive. Suppose $x, y \in X$ with $\height(x) = 1$ and $x \not\leq y$. Then $\psi(x) \cap R \not\subseteq \psi(y) \cap R$ and so there exists $r \in \psi(x) \cap R$ with $r \not\in \psi(y) \cap R.$ Note that $r \not\in \psi(y)$, and so we have $r \in \psi(x) \cap A$ with $r \not\in \psi(y) \cap A.$ Thus $\psi(x) \cap A \not\subseteq \psi(y) \cap A$.

%Let $x = x_i$ for some 
%$i \in \{1,\dots,m\}$ and assume $x_i \not\leq y$. 

%By construction, $\psi(x_i) \cap A = \widetilde{x_i}A$ for some prime element $\widetilde{x_i}$ of $R_0$ and this prime in $A$. From Remark \ref{remark about avoiding prime ideals for height ones}, we know that $\widetilde{x_i} \notin \psi(y)$ for every $y \in X$ satisfying $x_i \not\leq y$. Consequently, $\psi(x) \cap A \;=\; \widetilde{x_i}A \;\not\subseteq\; \psi(y) \cap A.
%$

Lastly, let \(J \in \operatorname{Spec}(A)\). There is a prime ideal \(P\) of \(T\) such that \(P \cap A = J\). Since \((P \cap B) \cap A = P \cap A = J\), the map \(f\) is surjective. Now suppose \(Q \in \operatorname{Spec}(B)\) satisfies $\psi(x) \subsetneq Q$ for a height one node $x \in X$. Then, by assumption, for every $b \in B$ and $P \in \ass_B(B/bB)$, $Q \not\subseteq P$. So, by construction, \(A\) contains a generating set for \(Q\), so \((A \cap Q)B = Q\). Therefore, if \(P_1, P_2 \in \operatorname{Spec}(B)\) satisfy $\psi(x) \subsetneq P_i$ for \(i=1,2\) and \(f(P_1)=f(P_2)\), then \(P_1 = (A \cap P_1)B = (A \cap P_2)B = P_2\).\end{proof}

% \begin{remark}\label{A contains a generating set for $Q$}
%     The construction of $A$ in Theorem $\ref{UFD theorem}$ ensures that if $Q \in \spec(B)$ with $\text{depth}(B_Q) \geq 2$, then $A$ contains a generating set for $Q$. 
    
%     % In particular, Theorem $\ref{UFD theorem}$ establishes an inclusion preserving bijective correspondence between prime ideals of $A$ containing $\psi(x) \cap A$ and prime ideals of $B$ containing $\psi(x)$ for all $x \in X$ with $\Ht(x)=1$.
% \end{remark}

We end this section by establishing that the conditions given in Theorem $\ref{main theorem of characterization of embeddings of UFDs}$ are indeed sufficient.
\begin{theorem}\label{UFD Theorem together with a saturated embedding}
    Let \(X\) be a finite poset of dimension at least two with a unique minimal element and a unique maximal element. Moreover, suppose that if $x,y \in X$ with $\height(x)=1$ and $x<_c y$, then $\height(y)=2$. Then, there exists a local UFD $A$ and a dimension-preserving saturated embedding $\phi: X \longrightarrow \spec(A)$.
\end{theorem}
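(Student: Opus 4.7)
The plan is to assemble the proof directly from Theorems~\ref{local domain with desired properties} and~\ref{UFD theorem}. First I would apply Theorem~\ref{local domain with desired properties} to $X$ (which has dimension $\geq 2$, a unique minimum, and a unique maximum) to produce a quasi-excellent local domain $(B,M)$ containing $\Q$ with $|B|=|B/M|=c$ and $\text{depth}(B)\geq 2$, together with a dimension-preserving saturated embedding $\psi:X\to\spec(B)$ satisfying the avoidance/depth conditions on height-one images. These data fit exactly the hypotheses of Theorem~\ref{UFD theorem}, which I would then invoke to obtain a local UFD $(A,M\cap A)\subseteq B$ with $\widehat A=\widehat B$, distinct prime elements $\widetilde{x_1},\dots,\widetilde{x_m}$ of $A$ realizing $\psi(x_i)\cap A=\widetilde{x_i}A$ for the height-one nodes of $X$, and a surjective contraction $f:\spec(B)\to\spec(A)$ satisfying $f(P)B=P$ (and injective on such $P$) whenever $\psi(x)\subsetneq P$ for some height-one node $x$. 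I would then define $\phi:=f\circ\psi$.

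Dimension preservation of $\phi$ is immediate, since $\widehat A=\widehat B$ gives $\dim A=\dim B=\dim X$. To see $\phi$ is a poset embedding, I would split the hypothesis $\phi(u)\subseteq\phi(v)$ by $\height(u)$. The cases in which $u$ is the unique minimum or $\height(u)=1$ follow directly from condition (4) of Theorem~\ref{UFD theorem}. For $\height(u)\geq 2$, condition (5) gives $\phi(u)B=\psi(u)$ and $\phi(v)B=\psi(v)$ (after ruling out $v$ being the minimum, since $\psi(u)\neq(0)$), so the inclusion lifts to $\psi(u)\subseteq\psi(v)$, and $\psi$ being an embedding yields $u\leq v$.

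The hardest part will be proving $\phi$ is saturated. Given $u<_c v$ in $X$ and the resulting cover $\psi(u)<_c\psi(v)$ in $\spec(B)$, I would show $\phi(u)<_c\phi(v)$ by cases. The case $\height(u)=0$ is trivial, since $\phi(u)=(0)$ and $\phi(v)=\widetilde v A$ is a principal, hence height-one, prime of $A$. When $\height(u)=1$, the theorem's additional hypothesis forces $\height(v)=2$; Theorem~\ref{local domain with desired properties} then yields $\height(\psi(v))=2$; and since $\phi(v)B=\psi(v)$, the dimension formula for the faithfully flat extension $A\hookrightarrow B$ (whose fiber at $\psi(v)$ is zero-dimensional) gives $\height(\phi(v))=2$, ruling out any prime strictly between the height-one $\phi(u)$ and the height-two $\phi(v)$.

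The main obstacle is the case $\height(u)\geq 2$. Suppose for contradiction that $\phi(u)\subsetneq J\subsetneq\phi(v)$ in $A$. Since $A$ is Noetherian, $J$ is finitely generated, and the proof of Theorem~\ref{UFD theorem} yields $JB\cap A=J$. The sandwich $\psi(u)=\phi(u)B\subseteq JB\subseteq\phi(v)B=\psi(v)$ together with the cover $\psi(u)<_c\psi(v)$ forces any minimal prime of $JB$ contained in $\psi(v)$ to equal either $\psi(u)$ or $\psi(v)$. The first option forces $JB=\psi(u)$ and hence $J=\phi(u)$, a contradiction. In the second option, $\psi(v)$ is the unique minimal prime of $JB$ lying inside $\psi(v)$, so the height formula applied to the faithfully flat extension $A/J\hookrightarrow B/JB$ (with $(\phi(v)/J)(B/JB)=\psi(v)/JB$, making the fiber zero-dimensional) yields $\height(\phi(v)/J)=\height(\psi(v)/JB)=0$. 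Thus $\phi(v)/J$ is a minimal prime of $A/J$; since $J$ is prime, this forces $\phi(v)=J$, again a contradiction. This establishes saturation and completes the proof.
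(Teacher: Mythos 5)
Your proposal is correct and follows the same overall architecture as the paper's proof: apply Theorem~\ref{local domain with desired properties}, feed the output into Theorem~\ref{UFD theorem}, set $\phi = f\circ\psi$, and verify embedding, saturation, and dimension preservation in turn. The one place where you genuinely diverge is the saturation argument for a cover $u <_c v$ with $\height(u)\ge 2$. The paper works upstairs in $B$: given $\phi(u)\subseteq P\subseteq\phi(v)$, it uses going-down for the flat map $A\to B$ to lift the chain to primes $J\subseteq Q\subseteq\psi(v)$ of $B$, then observes $\psi(u)=(A\cap J)B\subseteq J$ to trap $Q$ between $\psi(u)$ and $\psi(v)$. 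You instead extend the intermediate prime: from $JB\cap A=J$ (valid, since $A$ is Noetherian and the construction gives $IB\cap A=I$ for every finitely generated ideal $I$) and the sandwich $\psi(u)\subseteq JB\subseteq\psi(v)$ you pin down the minimal primes of $JB$ inside $\psi(v)$, then apply the flat dimension formula to $A/J\to B/JB$ to force $J=\phi(v)$ in the remaining case. Both arguments rest on the same two pillars --- $\phi(x)B=\psi(x)$ for $\height(x)\ge 2$ and flatness of $A\hookrightarrow B$ --- so neither buys much over the other; yours avoids lifting a two-step chain by going-down at the cost of a slightly heavier fiber-dimension computation. One detail to patch in your embedding argument: when $\height(u)\ge 2$ and $\phi(u)\subseteq\phi(v)$, you rule out $v$ being the minimal node but not $v$ being a height-one node, in which case condition (5) of Theorem~\ref{UFD theorem} does not apply to $\psi(v)$. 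That case is vacuous --- there $\phi(v)=\widetilde{x_j}A$ would force the nonzero prime $\phi(u)$ to equal $\widetilde{x_j}A$, hence $\psi(u)=\phi(u)B=\widetilde{x_j}B$ would be principal, contradicting $\height(\psi(u))\ge 2$ --- and this is exactly the extra step the paper's proof includes.
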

\begin{proof}
Let \( (B, M) \) be the local domain together with the dimension-preserving saturated embedding \( \psi: X \to \operatorname{Spec}(B) \)  obtained from Theorem \(\ref{local domain with desired properties}\). Then $B$ contains the rationals, $|B| = |B/M| = c$, and depth$(B) \geq 2$. Moreover, if $x \in X$ with ht$(x) = 1$, $Q \in \Spec(B)$ with $\psi(x) \subsetneq Q$, and $b$ is an element of $B$, then $Q \not\subseteq \mathfrak{q}$ for every $\mathfrak{q} \in \ass(B/bB)$ and depth$(B_Q) \geq 2$. In addition, if $x \in X$ with ht$(x) \leq 2$ then ht$(\psi(x)) = \height(x)$.
Denote by $\{x_1,\dots,x_m\}$ the set of height one nodes in $X$. By Theorem $\ref{UFD theorem}$, there exists a local UFD satisfying the five conditions stated in Theorem \ref{UFD theorem}.
%$A \subseteq B$ such that $\widehat{A}=\widehat{B}=T$. Moreover, the map $f: \spec(B)\longrightarrow \spec(A)$ given by $f(P)=A \cap P$ is onto. It also satisfies the following: if $P \in \spec(B)$ with $\psi(x) \subsetneq P$ for $x$ a height-one node of $X$, then $f(P)B=P$ and, if $P_1$ and $P_2$ are prime ideals of $B$ such that $\psi(x) \subsetneq P_i$ for $i=1,2$, then $f(P_1)=f(P_2)$ implies that $P_1=P_2$.

Define $\phi: X \longrightarrow \spec(A)$ as $\phi=f \circ \psi$ where $f : \Spec(B) \longrightarrow \Spec(A)$ is given by $f(P) = A \cap P$. We claim that $\phi$ is a dimension-preserving saturated embedding. Let $x,y \in X$ with $x \leq y$. Then $\psi(x) \subseteq \psi(y)$ and so $\phi(x)=A \cap \psi(x) \subseteq A \cap \psi(y)=\phi(y)$. Now, suppose that $\phi(x) \subseteq \phi(y)$. Then $A \cap \psi(x) \subseteq A \cap \psi(y)$. Suppose that $A \cap \psi(x) = (0)$ and ht$(x) > 0$. Then $x_i \leq x$ for some $i = 1,2, \ldots ,m$, and so $\psi(x_i) \subseteq \psi(x).$ It follows that $\widetilde{x_i}A = A \cap \psi(x_i) \subseteq A \cap \psi(x) = (0)$, a contradiction. Hence, if $A \cap \psi(x) = (0)$ then $x$ is the minimal node of $X$ and so $x \leq y$. Now suppose $A \cap \psi(x) \neq (0)$. Then $\psi(x) \neq (0)$, and so $x$ is not the minimal node of $X$. It follows that $x_i \leq x$ for some $i = 1,2, \ldots ,m$. If $x = x_i$, then the fourth condition of Theorem \ref{UFD theorem} implies that $x \leq y.$ Now suppose $x_i \lneq x$. Then we have $\psi(x_i) \subsetneq \psi(x),$ and, in particular, ht$(\psi(x)) \geq 2$. By the fifth condition of Theorem \ref{UFD theorem}, $(A \cap \psi(x))B = \psi(x).$ Note that $\psi(y) \neq (0)$ and so $y$ is not the minimal node of $X.$ So $x_j \leq y$ for some $j = 1,2, \ldots ,m$. If $x_j = y$, then $(0) \neq A \cap \psi(x) \subseteq A \cap \psi(y) = \widetilde{x_j}A$. It follows that $A \cap \psi(x) = A \cap \psi(y) = \widetilde{x_j}A$ and so $\widetilde{x_j}B = \psi(x)$ contradicting that ht$(\psi(x)) \geq 2$. Therefore, $x_j \lneq y$ and we have $\psi(x_j) \subsetneq \psi(y)$. It follows that $(A \cap \psi(y))B = \psi(y).$ We now have $\psi(x) = (A \cap \psi(x))B \subseteq (A \cap \psi(y))B = \psi(y)$. Since $\psi$ is a saturated embedding, $x \leq y$. In all cases, we have $x \leq y$ and so $\phi$ is a poset embedding.

%If \(\height(x) \geq 2\), then there exist height one nodes \(x_i\) and $x_j$ for some \(i,j \in 1, \dots, m\) such that \(x_i \lneq x\) and \(x_j \lneq y\). Since \(\psi\) is a saturated embedding, it follows that \(\psi(x_i) \subsetneq \psi(x)\) and \(\psi(x_i) \subsetneq \psi(y)\).  Thus, the properties of the map $f$ ensures that \(\phi(x) \subseteq \phi(y)\) implies \(\psi(x) = \phi(x)B \subseteq \phi(y)B = \psi(y)\). Since \(\psi\) is an embedding, we conclude \(x \leq y\). If \(\height(x) = 1\), then by Theorem \(\ref{UFD theorem}\), \(\phi(x) = A \cap \psi(x) \subseteq A \cap \psi(y) = \phi(y)\) holds if and only if \(x \leq y\). Finally, if \(\height(x) = 0\), then \(x\) is the unique minimal node of \(X\), which implies $x \leq y$. 

To show that \(\phi\) is a saturated embedding, let \(x, y \in X\) with \(x <_c y\). Since $\phi$ is a poset embedding, $\phi(x) \subsetneq \phi(y)$ and so it remains to show that $\phi(x) \subsetneq \phi(y)$ is saturated. Suppose that there is a prime ideal $P$ of $A$ such that $\phi(x) \subseteq P \subseteq \phi(y)$. Since \(\psi\) is a saturated embedding, \(x <_c y\) implies that \(\psi(x) \subsetneq \psi(y)\) is saturated in $B$. Consider the case where \(\height(x) \geq 2\).  Then, there exists a height one node $x_i$ such that $x_i \lneq x$, which implies \(\psi(x_i) \subsetneq \psi(x)\). Thus, $(A \cap \psi(x))B = \psi(x)$. By the going down theorem, there are prime ideals $Q$ and $J$ of $B$ such that $J \subseteq Q \subseteq \psi(y)$ and $A \cap Q = P$ and $A \cap J = \phi(x)$. Now $\psi(x) = (A \cap \psi(x))B = (A \cap J)B \subseteq J$.  It follows that $Q = \psi(x)$ or $Q = \psi(y).$ In the first case, $P = \phi(x)$ and in the second, $P = \phi(y)$, and so $\phi(x) \subsetneq \phi(y)$ is saturated.

%Therefore, \(\psi(x) <_c \psi(y)\) in $B$ necessarily implies \(\phi(x) <_c \phi(y)\) in $A$. 

Now suppose that \(\operatorname{ht}(x)=1\). By our assumptions on \(X\), this forces \(\operatorname{ht}(y)=2\). Thus, \(\psi(x)\) and \(\psi(y)\) are prime ideals in \(B\) with heights one and two respectively. 
%Moreover, since \(x <_c y\), we have \(\psi(x) \subsetneq \psi(y)\), and so \(A\) contains a generating set for \(\psi(y)\). 
If we let \(x=x_i\) for some \(i\in\{1,\ldots,m\}\), then by construction there exists a prime element \(\widetilde{x_i}\) in \(A\) such that \(A\cap\psi(x_i)=\widetilde{x_i}A\). Hence, \(\phi(x)=A\cap\psi(x)=\widetilde{x_i}A\) is a height one prime ideal of \(A\). Since $\phi(x) \subsetneq \phi(y)$, we have that ht$(\phi(y)) \geq 2.$ Furthermore, since \(\widehat{A}=\widehat{B}\), it follows by the going down theorem that \(\operatorname{ht}(\phi(y))=\operatorname{ht}(A\cap\psi(y))\leq 2\). Thus, $\phi(x) \subsetneq \phi(y)$ is saturated.

%If \(\operatorname{ht}(\phi(y))=2\), then \(\phi(x) <_c \phi(y)\). Alternatively, if \(\operatorname{ht}(\phi(y))=1\), then the relation \(\psi(x) <_c \psi(y)\) in \(B\) necessarily implies \(x_iA=A\cap\psi(x)=A\cap\psi(y)=\phi(y)\). Given that \(A\) contains a generating set for \(\psi(y)\), it follows that \((x_iA)B=\psi(y)\), which contradicts the fact that \(\psi(y)\) is a height two prime ideal in \(B\). 

Finally, if ht$(x) = 0$, then $x$ is the minimal node of $X$ and $y = x_i$ for some $i = 1,2, \ldots ,m$. In this case, $\psi(x) = (0)$ and so $\phi(x) = (0)$ and $\phi(y) = \widetilde{x_i}A$ is a height one prime ideal of $A$. Thus, \(\phi(x) \subsetneq \phi(y)\) is saturated and it follows that $\phi$ is a saturated embedding. 

To establish that $\phi$ is dimension preserving, we have
$$\dim(X)=\dim(B)=\dim(\widehat{B})=\dim(\widehat{A})=\dim(A)$$
and so $\phi$ is a dimension-preserving saturated embedding.
\end{proof}

\section{Characterization of Dimension-Preserving Saturated Embeddings of UFDs}\label{characterization of dimension-preserving saturated embeddings of UFDs}
In this section, we state and prove the main result of this paper, along with two corollaries.
\begin{theorem}\label{main theorem of characterization of embeddings of UFDs}
    Let $X$ be a finite poset. Then, there exists a local UFD $(A,M)$ and a dimension-preserving saturated embedding $\phi: X \longrightarrow \spec(A)$ if and only if:
    \begin{enumerate}
        \item $X$ has a unique minimal node and unique maximal node, and
        \item If $\dim(X) \geq 2$ and $x, y  \in X$ with $\height(x)=1$ and $x<_c y,$ then $\height(y)=2$.
    \end{enumerate}
\end{theorem}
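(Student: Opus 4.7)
My plan is to handle the two directions of the equivalence separately; nearly all the work has already been packaged in Theorem~\ref{UFD Theorem together with a saturated embedding}, so the present theorem is essentially an assembly step plus a short necessity argument.

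For necessity, suppose $\phi\colon X\longrightarrow\spec(A)$ is a dimension-preserving saturated embedding into a local UFD $(A,M)$, and set $n=\dim(X)=\dim(A)$. I would first note that in a finite poset any chain of length $n$ must already be saturated, since otherwise it could be refined to length $n+1$. The image under $\phi$ of such a chain is a saturated chain of length $n$ in $\spec(A)$, which is therefore maximal in length and must begin at a minimal prime and end at a maximal ideal; in a local domain these are $(0)$ and $M$ uniquely. So there exist $x_0,x_\top\in X$ with $\phi(x_0)=(0)$ and $\phi(x_\top)=M$. If $y\in X$ were another minimal node, then $(0)=\phi(x_0)\subsetneq\phi(y)$ would give $x_0<y$ by order reflection, contradicting minimality of $y$; a dual argument treats maximality, yielding condition~(1).

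For condition~(2), suppose $\dim(X)\geq 2$, $\height(x)=1$, and $x<_c y$. Since $x_0$ is now the unique minimum, the saturated chain $x_0<_c x<_c y$ maps to a saturated chain $(0)\subsetneq_c\phi(x)\subsetneq_c\phi(y)$ in $\spec(A)$, so $\phi(x)$ has height one. Writing $\phi(x)=\pi A$ (possible because $A$ is a UFD), the cover $\phi(x)\subsetneq_c\phi(y)$ forces $\phi(y)/\phi(x)$ to be a height one prime of $A/\phi(x)$, hence minimal over any of its nonzero elements by Krull's height theorem. Pulling back, $\phi(y)$ is a minimal prime of $(\pi,z)A$ for some $z\in\phi(y)\setminus\phi(x)$, so Krull gives $\height(\phi(y))\leq 2$; combined with the chain, $\height(\phi(y))=2$. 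Finally, if $\height(y)\geq 3$ in $X$, a saturated chain of length $3$ ending at $y$ would begin at $x_0$ by uniqueness and map to a length~$3$ saturated chain ending at $\phi(y)$, a contradiction; hence $\height(y)=2$. This UFD-specific Krull-style bound is the only subtle step in the necessity argument.

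For sufficiency I would split on $\dim(X)$. If $\dim(X)=0$, condition~(1) forces $X$ to be a single point and $A=\Q$ with the evident embedding works. If $\dim(X)=1$, any element of $X$ other than the unique minimum and maximum would produce a chain of length $2$, so $X$ is the two-element chain $x_0<_c x_\top$, and the DVR $A=\Q[[t]]$ with $\phi(x_0)=(0)$, $\phi(x_\top)=(t)$ suffices. If $\dim(X)\geq 2$, the hypotheses coincide with those of Theorem~\ref{UFD Theorem together with a saturated embedding} verbatim, and invoking it delivers the required $A$ and $\phi$. The genuinely difficult content lies entirely in Theorem~\ref{UFD Theorem together with a saturated embedding} and the constructions of Sections~\ref{local domain construction} and~\ref{ufd construction}; the present theorem is the clean assembly.
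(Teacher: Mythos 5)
Your proposal is correct and follows essentially the same route as the paper: necessity of condition (2) via principality of the height-one prime $\phi(x)$ in the UFD together with the generalized principal ideal theorem applied to $(\pi,z)$, and sufficiency by splitting on $\dim(X)=0,1,\ge 2$ and invoking Theorem~\ref{UFD Theorem together with a saturated embedding} in the last case. Your expanded justification of condition (1) (longest chains forcing $\phi(x_0)=(0)$ and $\phi(x_\top)=M$, then order reflection) is just a fleshed-out version of what the paper asserts immediately.
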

\begin{proof}
    Suppose that there exists a local UFD $A$ and a dimension-preserving saturated embedding $\phi: X \longrightarrow \spec(A)$. The first condition immediately follows from $A$ being a local integral domain and $\dim(X) = \dim(A)$. For the second condition, suppose that $x, y  \in X$ with $\height(x)=1$ and $x<_c y$. Since $\phi$ is a dimension-preserving saturated embedding, ht$(\phi(x)) = 1$ and $\phi(x)\subsetneq \phi(y)$ is saturated.

Let \( a \in \phi(y) \setminus \phi(x) \) and consider the localized ring \( A' = A_{\phi(y)} \). As \( A' \) is a UFD, the ideal \( \phi(x) A' \) is principal. Now, consider the ideal \( I = (\phi(x), a) A' \), which is generated by two elements. Since the containment \( \phi(x) A' \subsetneq \phi(y) A' \) is saturated, it follows that $\phi(y) A'$ is a minimal prime ideal over $IA'$. By the generalized principal ideal theorem, we obtain \( \height(\phi(y) A') = \height(\phi(y)) = 2 \) and it follows that ht$(y) = 2$. 

Now, suppose that $X$ is a finite poset with a unique minimal node and a unique maximal node, and that whenever $x,y \in X$ satisfy $\height(x)=1$ with $x<_c y$, we have $\height(y)=2$. We consider the following three cases: If \( \dim(X) = 0 \), then the map \( \phi: X \longrightarrow \operatorname{Spec}(\mathbb{Q}) \) defined by \( \phi(x) = (0) \) is a dimension-preserving saturated embedding. If \( \dim(X) = 1 \), consider the ring \( A = \mathbb{Q}[[t]] \) for an indeterminate \( t \), and define the map \( \phi: X \longrightarrow \operatorname{Spec}(\mathbb{Q}[[t]]) \) by setting \( \phi(x) = (0) \) if \( x \) is the minimal node of $X$ and \( \phi(x) = (t) \) if \( x \) is the maximal node of $X$. By construction, this map provides a dimension-preserving saturated embedding. Finally, if \( \dim(X) \geq 2 \), Theorem \(\ref{UFD Theorem together with a saturated embedding}\) guarantees the existence of a local UFD \( A \) and a dimension-preserving saturated embedding \( \phi: X \longrightarrow \operatorname{Spec}(A) \).
\end{proof}

Our conditions in Theorem \ref{main theorem of characterization of embeddings of UFDs} are remarkably mild, demonstrating that, for most finite posets with a unique maximal node and a unique minimal node, there is a local UFD $A$ and a dimension-preserving saturated embedding from the poset to the prime spectrum of $A$. We now present a class of posets that satisfies the first condition of Theorem \ref{main theorem of characterization of embeddings of UFDs}, but does not satisfy the second.

\begin{example}
     The poset $X$ depicted in Figure \ref{DrawingOfX} has exactly one maximal node and one minimal node.  However, the maximal node has height four and covers a node with height one. Thus, $X$ does not satisfy the second condition of Theorem \ref{main theorem of characterization of embeddings of UFDs}. It follows that there does not exist a local UFD $A$ and a dimension-preserving saturated embedding from $X$ to $A$. In fact, a slight modification of $X$, as shown in Figure \ref{DrawingOfModificationX}, reveals a broad class of posets that satisfy the first condition of Theorem \ref{main theorem of characterization of embeddings of UFDs}, but not the second. Thus, there is no dimension-preserving saturated embedding of one of these posets into the prime spectrum of a local UFD.
\end{example}
\begin{figure}
    \centering
    \begin{subfigure}[t]{0.45\textwidth}
        \centering
        \begin{tikzpicture}[
            main/.style={circle, draw, fill=black, inner sep=2pt},
            node distance=1.2cm
        ]
            % NODES
            \node[main] (6) {};
            \node[main] (8) [below right of=6] {};
            \node[main] (9) [below of=8] {};
            \node[main] (12) [below of=9] {};
            \node[main] (10) [below left of=12] {};
            \node[main] (7) [below left of=8, left of=8] {};

            % EDGES
            \draw (6) -- (8)  -- (9) -- (12) -- (10);
            \draw (6) -- (7) -- (10);
        \end{tikzpicture}
        \caption{Poset $X$}
        \label{DrawingOfX}
    \end{subfigure}
    \hfill
    \begin{subfigure}[t]{0.45\textwidth}
        \centering
        \begin{tikzpicture}[
            main/.style={circle, draw, fill=black, inner sep=2pt},
            node distance=0.9cm
        ]
            % NODES
            \node[main] (6) {};
            \node[main] (8) [below right of=6] {};
            \node[main] (9) [below of=8] {};
            \node[main] (12) [below of=9] {};
            \node[main] (13) [below of=12] {};
            \node[main] (10) [below left of=13] {};
            \node[main] (7) [below left of=8, left of=8] {};

            % EDGES
            \draw (6) -- (8) --(9);
            \draw  (12) -- (13) -- (10);
            \draw (6) -- (7) -- (10);

            % Split edge with \vdots
            \path (9) -- (12) coordinate[pos=0.2] (A);
            \path (9) -- (12) coordinate[pos=0.8] (B);
            \draw (9) -- (A);
            \draw (B) -- (12);
            \node at ($(9)!0.4!(12)$) {\scalebox{0.7}{$\vdots$}};
        \end{tikzpicture}
        \caption{Modification of poset $X$}
        \label{DrawingOfModificationX}
    \end{subfigure}
    \caption{Examples of posets that satisy the first condition of Theorem \ref{main theorem of characterization of embeddings of UFDs} but not the second}
    \label{fig:two-posets}
\end{figure}
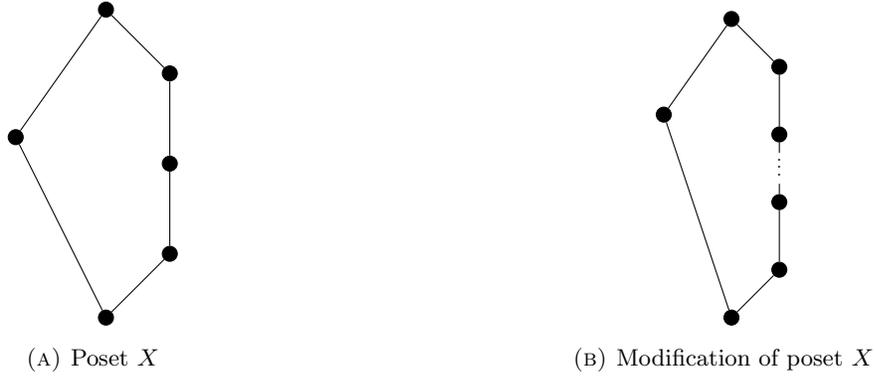

\begin{corollary}\label{corollary UFD for non local posets}
     Let $X$ be a finite poset. Then, there exists a Noetherian UFD $A$ and a dimension-preserving saturated embedding $\phi: X \longrightarrow \spec(A)$ if and only if:
    \begin{enumerate}
        \item $X$ has a unique minimal node, and
        \item If $\dim(X) \geq 2$ and $x, y  \in X$ with $\height(x)=1$ and $x<_c y,$ then $\height(y)=2$.
    \end{enumerate}
\end{corollary}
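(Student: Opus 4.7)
The plan is to obtain necessity directly from the UFD hypothesis, and to reduce sufficiency to the local case (Theorem~\ref{main theorem of characterization of embeddings of UFDs}) by enlarging $X$ to a poset with a unique maximum, applying that theorem, and then localizing back down.

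For necessity, given a dimension-preserving saturated embedding $\phi : X \to \spec(A)$ with $A$ a Noetherian UFD of dimension $n = \dim(X)$, I would first refine a length-$n$ chain in $X$ to a saturated chain starting at a minimal element $x_0$; since its image is a saturated chain in $\spec(A)$ of length $\dim(A)$, the bottom $\phi(x_0)$ must have height $0$ and hence equal $(0)$ (the unique minimal prime of the domain $A$). A second minimal element $x_0'$ of $X$ would then have $(0) \subseteq \phi(x_0')$, contradicting the poset embedding property, which gives condition (1). For condition (2), the proof from Theorem~\ref{main theorem of characterization of embeddings of UFDs} transfers verbatim, since its only inputs about $A$ are that $A$ is a UFD of Krull dimension $n$: the localization $A_{\phi(y)}$ is again a UFD in which $\phi(x)$ is principal, and Krull's generalized principal ideal theorem then applies to a two-generator ideal with minimal prime $\phi(y)A_{\phi(y)}$.

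For sufficiency, I would handle $\dim(X) \leq 1$ by hand: a point embeds into $\spec(\Q)$; a minimum together with $k$ maximal nodes embeds into $\spec(\Q[t])$ by sending the minimum to $(0)$ and each maximal node to a distinct principal prime $(t - a_i)$. For $n = \dim(X) \geq 2$, the main step is to construct an enlarged poset $X^*$ that satisfies the hypotheses of Theorem~\ref{main theorem of characterization of embeddings of UFDs}. I would do this by adjoining, above each maximal element $y_i$ of $X$, a fresh chain of length $n - \height(y_i)$, and then a single new top node $z$ covering the top of each of these chains (covering $y_i$ itself if $\height(y_i) = n$). Then $X^*$ has the same (unique) minimum as $X$, a unique maximum $z$, and $\dim(X^*) = n+1$. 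The hypothesis (2) on $X$, together with the fact that every newly introduced node has height at least $2$ in $X^*$, will ensure that $X^*$ satisfies condition (2) of the main theorem. Applying that theorem then yields a local UFD $(B, \mathfrak{m})$ and a dimension-preserving saturated embedding $\Phi : X^* \to \spec(B)$.

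To complete the proof, I would set $P_i = \Phi(y_i)$, which is a prime ideal of $B$ with $\height(P_i) = \height(y_i) \leq n$ (hence strictly below $\mathfrak{m}$) by the dimension-preserving property, and localize: $A = (B \setminus \bigcup_i P_i)^{-1} B$. Then $A$ is a Noetherian UFD, and prime avoidance identifies $\spec(A)$, inclusion-preservingly, with the collection of primes of $B$ contained in some $P_i$. Since every $x \in X$ lies below some $y_i$, the formula $\phi(x) = \Phi(x) A$ defines a saturated embedding $X \to \spec(A)$, and $\dim(A) = \max_i \height(P_i) = n = \dim(X)$. The main technical obstacle is the first of these two steps: careful bookkeeping is needed to confirm that condition (2) really holds in $X^*$ after the chain insertions, so that Theorem~\ref{main theorem of characterization of embeddings of UFDs} is applicable; once that is in place, the localization and identification steps are routine.
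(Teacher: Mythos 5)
Your proof is correct, and its overall architecture matches the paper's: necessity of condition (2) via the principal--ideal--theorem argument in the localized UFD $A_{\phi(y)}$, and sufficiency by enlarging $X$ to a poset with a unique maximal node, invoking Theorem~\ref{main theorem of characterization of embeddings of UFDs}, and then localizing at the complement of the union of the images of the maximal elements of $X$. The genuine difference lies in how the enlargement is built, and your version is the more careful one. The paper simply sets $X' = X \cup \{z\}$ with $z$ above everything; but if $\dim(X) \geq 2$ and $X$ has a maximal element $y_i$ of height one, then in $X'$ one has $\height(y_i)=1$, $y_i <_c z$, and $\height(z)=\dim(X)+1\geq 3$, so $X'$ violates condition (2) of Theorem~\ref{main theorem of characterization of embeddings of UFDs} and that theorem cannot be invoked for $X'$ (indeed, by its ``only if'' direction no local UFD admits a dimension-preserving saturated embedding of such an $X'$). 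Your padding of each maximal element $y_i$ with a chain of length $n-\height(y_i)$ before adjoining $z$ repairs exactly this: every cover of a height-one node of $X^*$ is either a cover already present in $X$ (handled by hypothesis (2) on $X$) or the first node of a padding chain above a height-one maximal element, which has height two in $X^*$; and $z$ covers only nodes of height $n\geq 2$. One small point to tighten: the equality $\height(\Phi(y_i))=\height(y_i)$ does not follow from dimension-preservation alone, since the main theorem does not assert height-preservation; you obtain it from the saturated chain of length $\height(y_i)$ from $(0)$ up to $\Phi(y_i)$, together with the chain of length $n+1-\height(y_i)$ from $\Phi(y_i)$ up through the images of the padding nodes and $\Phi(z)\subseteq\mathfrak{m}$, combined with $\height(P)+\dim(B/P)\leq\dim(B)=n+1$. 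With that in place, $\dim(A)=\max_i \height(\Phi(y_i))=n$ as you claim, and the remaining steps (prime avoidance, the inclusion-preserving correspondence under localization, and restriction of $\Phi$ to $X$) are routine.
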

\begin{proof}
The first condition is necessary since $A$ is a domain and $\phi$ is a dimension-preserving saturated embedding.   The argument for the necessity of the second condition is identical to that given in the proof of Theorem \ref{main theorem of characterization of embeddings of UFDs}. 

To show that the two conditions are sufficient, we follow the proof of Corollary \ref{corollary quasi-excellent 2}, replacing the appeal to Theorem \ref{local quasi-excellent ring codimension preserving theorem} by Theorem \ref{main theorem of characterization of embeddings of UFDs}. Assume that the two stated conditions hold. We define a new poset $X' = X \cup \{z\}$ where the order is given by $y <_{X'} z$ for $y \in X$ and $x \leq_{X'} y$ if and only if $x \leq_X y$ for $x,y \in X$. 
Then $X'$ has a unique maximal node and a unique minimal node, and so by Theorem \ref{main theorem of characterization of embeddings of UFDs} there exists a local UFD $(A',M')$ and a dimension-preserving saturated embedding $\phi' : X' \longrightarrow \Spec(A')$. In particular, dim$(X') = \mbox{dim}(A')$.
Let \(y_1,\dots,y_n\) be the maximal elements of \(X\).  
If $y \in X'$ and $y <_c z$, then $y = y_i$ for some $i = 1,2, \ldots ,n$.  Since $\phi'$ is a saturated embedding, $\phi'(y_i) \subsetneq \phi'(z) = M'$ is saturated for every $i = 1,2, \ldots ,n$.

Set \(\mathfrak S = A'\setminus\bigl(\phi'(y_1)\cup\cdots\cup\phi'(y_n)\bigr)\) and let \(A=\mathfrak S^{-1}A'\).  Then $A$ is a Noetherian UFD.
Moreover, there is an inclusion preserving one-to-one correspondence between the prime ideals of \(A\) and the prime ideals of \(A'\) contained in \(\phi'(y_i)\) for some $i = 1,2, \ldots ,n$. Thus, \(\phi\colon X\to \spec(A)\) defined by $\phi(y)=\phi'(y)A$ is a saturated embedding. Note that dim$(A) \leq \mbox{dim}(A') - 1$.

Let dim$(X') = m$, and note that dim$(X) = m - 1$. Then there is a chain in $X'$ of the form
$$x = x_0 <_c x_1 <_c \cdots <_c x_{m - 1} <_c x_m = z$$ where $x$ is the minimal node of $X'$. Now, $x_{m - 1} = y_i$ for some $i = 1,2, \ldots ,n$. Since $\phi'$ is a saturated embedding, we have a saturated chain of prime ideals
$$(0) = \phi'(x) \subsetneq \phi'(x_1) \subseteq \cdots \subsetneq \phi'(y_i) \subsetneq M'$$ of length $m$ in $A'$, and so we have a corresponding saturated chain of prime ideals
$$(0) = \phi'(x)A \subsetneq \phi'(x_1)A \subsetneq \cdots \subsetneq \phi'(y_i)A$$ in $A$ of length $m - 1$. It follows that $\dim(A) \geq m - 1$, and so we have $\dim(A) = m - 1$. Thus,
$\dim(A) = \dim(X)$. 
Therefore, $\phi$ is a dimension-preserving saturated embedding.
\end{proof}

\begin{corollary}\label{dimplusone}
    Let $X$ be a finite poset. Then, there exists a Noetherian UFD $A$ and a saturated embedding $\phi: X \longrightarrow \spec(A)$ such that $\dim(A)=\dim(X)+1$.
\end{corollary}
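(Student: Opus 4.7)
The plan is to reduce to Corollary~\ref{corollary UFD for non local posets} by adjoining a single new minimum to $X$. Let $X$ be an arbitrary finite poset, and define $X' = X \sqcup \{z\}$ where $z$ is a new element declared to satisfy $z <_{X'} x$ for every $x \in X$. Then $X'$ has a unique minimum $z$; the natural inclusion $\iota \colon X \hookrightarrow X'$ is a saturated embedding (no element of $X' \setminus X$ can lie strictly between two comparable elements of $X$, so covers of $X$ remain covers of $X'$); and $\dim(X') = \dim(X) + 1$, since the longest chain in $X'$ is a longest chain of $X$ with $z$ prepended.

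Assuming $X'$ satisfies condition~(2) of Corollary~\ref{corollary UFD for non local posets}, I would apply that corollary to $X'$ to obtain a Noetherian UFD $A$ with $\dim(A) = \dim(X') = \dim(X)+1$ together with a dimension-preserving saturated embedding $\phi' \colon X' \longrightarrow \spec(A)$. Since saturated embeddings compose, $\phi := \phi' \circ \iota \colon X \longrightarrow \spec(A)$ is a saturated embedding, and $\dim(A) = \dim(X)+1$, which is precisely what Corollary~\ref{dimplusone} requires.

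The main obstacle is verifying condition~(2) for $X'$. The height-one nodes of $X'$ are precisely the minimal nodes of $X$, and their covers in $X'$ coincide with their covers in $X$; so condition~(2) demands that every $y \in X$ covering a minimum of $X$ satisfy $\height_X(y) = 1$. This may fail when some minimum of $X$ is covered by an element that also admits a longer saturated chain from a different minimum. I would address this by enlarging $X'$: for each such ``problematic'' minimum $x_0$, insert an auxiliary element $z_{x_0}$ between $z$ and $x_0$, thereby raising $x_0$ to height two in the enlarged poset and removing it from the set of height-one nodes to which condition~(2) applies. The auxiliary elements must be chosen so that $X \hookrightarrow X'$ remains saturated (they are inserted only below elements of $X$, not between them), that the groups of minimums sharing a common cover are raised or left alone as a block so that no height-one node ends up covered by an element of height greater than two, and that the dimension is controlled. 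In general, this controlled enlargement produces a poset $X''$ of dimension $\dim(X)+1$ or $\dim(X)+2$ satisfying both hypotheses of Corollary~\ref{corollary UFD for non local posets}; applying the corollary to $X''$ and, if necessary, localizing the resulting UFD at the complement of $\bigcup_{m \in \max X}\phi''(m)$ (exactly as in the proof of Corollary~\ref{corollary UFD for non local posets}) yields a Noetherian UFD of dimension $\dim(X)+1$ into which $X$ embeds saturatedly.
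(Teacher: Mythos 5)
Your strategy --- adjoin a global minimum $z$ and then insert auxiliary elements below the ``problematic'' minimal nodes of $X$ so that condition (2) of Corollary \ref{corollary UFD for non local posets} holds for the enlarged poset --- is the same as the strategy of the paper's proof (the paper's set $Z$ consists of your problematic minima, and its elements $(x_i,x_i)$ play the role of your $z_{x_0}$'s). But the step you leave open is a genuine gap, and it cannot be closed. The two difficulties you flag interact fatally: inserting $z_{x_0}$ lengthens every chain through $x_0$ by one, so the enlarged poset has dimension $\dim(X)+1$ only if no raised minimum $x_0$ satisfies $\dim((x_0)^{\uparrow}_X)=\dim(X)$; on the other hand, ``must be raised'' propagates through shared covers exactly as you describe, and this propagation can be forced to reach a minimum that realizes $\dim(X)$.

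Concretely, let $X=\{x_1,x_2,x_3,y,v,w,a,b\}$ with cover relations $x_1<_c a<_c b$, $x_1<_c y$, $x_2<_c y$, $x_2<_c w$, and $x_3<_c v<_c w$, so that $\dim(X)=2$. Here $x_2$ must be raised (it is covered by $w$, which has height $2$), which puts $y$ in height three, which forces $x_1$ to be raised as well, which stretches $z<z_{x_1}<x_1<a<b$ to length $4=\dim(X)+2$. No alternative enlargement and no localization can rescue this, because for this $X$ \emph{every} Noetherian UFD $A$ admitting a saturated embedding $\phi$ has $\dim(A)\ge 4$. Indeed, $\phi(x_1),\phi(x_2),\phi(x_3)$ are pairwise incomparable primes of the domain $A$, hence all nonzero, hence all of height at least one; then $\height(\phi(w))\ge\height(\phi(x_3))+2\ge 3$, so the principal ideal theorem argument from the proof of Theorem \ref{main theorem of characterization of embeddings of UFDs} (a saturated inclusion $P\subsetneq Q$ with $\height(P)=1$ forces $\height(Q)=2$ in a Noetherian UFD) applied to $\phi(x_2)\subsetneq\phi(w)$ yields $\height(\phi(x_2))\ge 2$; this gives $\height(\phi(y))\ge 3$, so the same argument applied to $\phi(x_1)\subsetneq\phi(y)$ yields $\height(\phi(x_1))\ge 2$, and the chain below $\phi(x_1)$ followed by $\phi(x_1)\subsetneq\phi(a)\subsetneq\phi(b)$ has length at least four. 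Thus $\dim(A)=\dim(X)+1$ is unattainable for this poset, and any repair must either weaken the conclusion to $\dim(A)\le\dim(X)+2$ or restrict the class of posets. (The paper's own proof of Corollary \ref{dimplusone} breaks on the same example: there $Z=\{x_2\}$ and $\dim(X')=\dim(X)$ does hold, but the resulting $X''$ fails condition (2) of Corollary \ref{corollary UFD for non local posets}, since $x_1$ has height one in $X''$ and is covered by $y$, which has height three; for other posets, such as one with $x\in Z$ and $\dim(x^{\uparrow}_X)=\dim(X)$, the unproved assertion $\dim(X')=\dim(X)$ is itself false.)
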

\begin{proof}
To begin, we consider the set $$Z = \{x \in X \mid \mbox{ht}(x) = 0 \mbox{ and there exists } y \in X \mbox{ with } x <_c y \mbox{ and ht} (y) \geq 2\}.$$
If $Z$ is empty, we let $X' = X$.  If $Z$ is not empty, let $Z = \{x_1, \ldots ,x_m\}$. Define $$Y = \{(x_1,x_1), (x_2,x_2), \ldots ,(x_m,x_m)\}$$ and define $X' = X \cup Y.$ We now define an order relation on $X'$. Suppose $u,v \in X'$. If $u,v \in X$, then $u \leq_{X'} v$ if and only if $u \leq_{X} v$. If $u,v \in Y$, then define $u \leq_{X'} v$ if and only if $u = v$. Otherwise, $u <_{X'} v$ if and only if $u = (x_i,x_i)$ for some $i = 1,2, \ldots ,m$ and $x_i \leq_X v$. Note that dim$(X') = \mbox{dim}(X)$.

Now define $X'' = X' \cup \{z\}$ where the order on $X''$ is given by $z <_{X''} x$ for $x \in X'$ and $x <_{X''} y$ if and only if $x <_{X'} y$ for $x,y \in X'$. Note that dim$(X'') = \dim(X') + 1 = \dim(X) + 1$.
   
 The conditions of Corollary $\ref{corollary UFD for non local posets}$ are now satisfied for $X''$, so there is a Noetherian UFD \((A,M)\) and a saturated embedding $\phi\colon X''\longrightarrow\spec(A)$ with \(\dim A=\dim X''=\dim X+1\).  Restricting \(\phi\) to \(X\) gives the desired saturated embedding of \(X\) into \(\spec(A)\).\end{proof}

Let $X = \{x,y\}$ where $x$ and $y$ are not comparable. Then $\dim(X) = 0$ and indeed it is possible to find a Noetherian UFD $A$ of dimension one such that there is a saturated embedding $\phi : X \longrightarrow \Spec(A)$. For example, let $S = \mathbb{Q}[[z,w]]$ and let $A$ be $S$ with all elements not in $(z) \cup (w)$ inverted. Then $A$ is a Noetherian UFD of dimension one and the map sending $x$ to $zA$ and $y$ to $wA$ is a saturated embedding.
However, there does not exist a Noetherian UFD $A$ (or even a domain $A$) and a saturated embedding $\phi: X \longrightarrow \Spec(A)$ such that dim$(A) = \dim(X)$. This example shows that the dimension equality in the statement of Corollary \ref{dimplusone} cannot be changed to $\dim(X) = \dim(A)$, and so our dimension equality is the best possible.

\section*{Acknowledgements}

We are extremely grateful to Cory Colbert for many helpful conversations, comments, and suggestions related to the results in this paper.

\begin{bibdiv}
\begin{biblist}

%\bib{SMALL2009}{article}%{
%   author={Arnosti, N.},
%   author={Karpman, R.},
%   author={Leverson, C.},
 %  author={Levinson, J.},
%   author={Loepp, S.},
%   title={Semi-local %formal fibers of minimal %prime ideals of excellent
%   reduced local rings},
%   journal={J. Commut. %Algebra},
 %  volume={4},
 %  date={2012},
%   number={1},
%   pages={29--56},
%   issn={1939-0807},
%   review={\MR{2913526}}%,
%   doi={10.1216/JCA-2012%-4-1-29},
%}

%\bib{Avery}{article}{
%   author={Avery, Chloe I.},
%   author={Booms, Caitlyn},
%   author={Kostolansky, Timothy M.},
%   author={Loepp, S.},
%   author={Semendinger, Alex},
%   title={Characterization of completions of noncatenary local domains and
%   noncatenary local UFDs},
%   journal={J. Algebra},
%   volume={524},
%   date={2019},
%   pages={1--18},
%   issn={0021-8693},
%   review={\MR{3902351}},
%   doi={10.1016/j.jalgebra.2018.12.016},
%}

\bib{Bonat}{article}{
   author={Bonat, Alexandra},
   author={Loepp, S.},
   title={Local unique factorization domains with infinitely many noncatenary posets},
   journal={Rocky Mountain J. Math, to appear.},
%   volume={},
%   date={},
%   number={},
%   pages={},
%   issn={},
%   review={,
%   doi={},
}

%\bib{GluingPaper}{article}{
%      title={Gluing Minimal Prime Ideals in Local Rings}, 
%      author={Cory H. Colbert and S. Loepp},
%      year={2021},
%      eprint={arXiv:2112.13278},
     % archivePrefix={arXiv},
     % primaryClass={math.AC}
%}	

\bib{Colbert}{article}{
   author={Colbert, C.},
   author={Loepp, S.},
   title={Every finite poset is isomorphic to a saturated subset of the
   spectrum of a Noetherian UFD},
   journal={J. Algebra},
   volume={643},
   date={2024},
   pages={340--370},
   issn={0021-8693},
   review={\MR{4691762}},
   doi={10.1016/j.jalgebra.2023.11.042},
}

\bib{gluingpaper}{article}{
   author={Colbert, Cory H.},
   author={Loepp, S.},
   title={Gluing minimal prime ideals in local rings},
   journal={Comm. Algebra},
   volume={51},
   date={2023},
   number={1},
   pages={239--247},
   issn={0092-7872},
   review={\MR{4525295}},
   doi={10.1080/00927872.2022.2096226},
}

\bib{HeitmannUFD}{article}{
   author={Heitmann, Raymond C.},
   title={Characterization of completions of unique factorization domains},
   journal={Trans. Amer. Math. Soc.},
   volume={337},
   date={1993},
   number={1},
   pages={379--387},
   issn={0002-9947},
   review={\MR{1102888}},
   doi={10.2307/2154327},
}

%\bib{heitmann}{article}{
%   author={Heitmann, %Raymond C.},
%   title={Completions of local rings with an isolated singularity},
%   journal={J. Algebra},
%   volume={163},
%   date={1994},
%   number={2},
%   pages={538--567},
%   issn={0021-8693},
%   review={\MR{1262718}},
%   doi={10.1006/jabr.1994.1031},
%}

\bib{HeitmannNoncatenary}{article}{
   author={Heitmann, Raymond C.},
   title={Examples of noncatenary rings},
   journal={Trans. Amer. Math. Soc.},
   volume={247},
   date={1979},
   pages={125--136},
   issn={0002-9947},
   review={\MR{517688}},
   doi={10.2307/1998777},
}

\bib{Hochster}{article}{
   author={Hochster, M.},
   title={Prime ideal structure in commutative rings},
   journal={Trans. Amer. Math. Soc.},
   volume={142},
   date={1969},
   pages={43--60},
   issn={0002-9947},
   review={\MR{251026}},
   doi={10.2307/1995344},
}

\bib{LoeppJoAl}{article}{
   author={Loepp, S.},
   title={Constructing local generic formal fibers},
   journal={J. Algebra},
   volume={187},
   date={1997},
   number={1},
   pages={16--38},
   issn={0021-8693},
   review={\MR{1425557}},
   doi={10.1006/jabr.1997.6768},
}

%\bib{Semendinger}{article}{
%   author={Loepp, Susan},
%   author={Semendinger, Alex},
%   title={Maximal chains of prime ideals of different lengths in unique
%   factorization domains},
%   journal={Rocky Mountain J. Math.},
%   volume={49},
%   date={2019},
%   number={3},
%   pages={849--865},
%   issn={0035-7596},
%   review={\MR{3983303}},
%   doi={10.1216/RMJ-2019-49-3-849},
%}

%\bib{matsumuracommalg}{book}{
%   author={Matsumura, Hideyuki},
%   title={Commutative algebra},
%   series={},
%   volume={56.},
%   edition={2},
%   publisher={Benjamin/Cummings Publishing Co., Inc., Reading, Mass.},
%   date={1980},
%   pages={xv+313},
%   isbn={0-8053-7026-9},
%   review={\MR{0575344}},
%}

\bib{Matsumura}{book}{
   author={Matsumura, Hideyuki},
   title={Commutative ring theory},
   series={Cambridge Studies in Advanced Mathematics},
   volume={8},
   edition={2},
   note={Translated from the Japanese by M. Reid},
   publisher={Cambridge University Press, Cambridge},
   date={1989},
   pages={xiv+320},
   isbn={0-521-36764-6},
   review={\MR{1011461}},
}

%\bib{countableexcellent}%{article}{
%   author={Loepp, S.},
%   author={Yu, Teresa},
%   title={Completions of countable excellent domains and countable
%   noncatenary domains},
%   journal={J. Algebra},
%   volume={567},
%   date={2021},
 %  pages={210--228},
%   issn={0021-8693},
%   review={\MR{4158729}},
%   doi={10.1016/j.jalgebra.2020.09.021},
%}

\bib{Nagata}{article}{
   author={Nagata, Masayoshi},
   title={On the chain problem of prime ideals},
   journal={Nagoya Math. J.},
   volume={10},
   date={1956},
   pages={51--64},
   issn={0027-7630},
   review={\MR{78974}},
}

\bib{Ogoma}{article}{
   author={Ogoma, Tetsushi},
   title={Noncatenary pseudogeometric normal rings},
   journal={Japan. J. Math. (N.S.)},
   volume={6},
   date={1980},
   number={1},
   pages={147--163},
   issn={0289-2316},
   review={\MR{615018}},
   doi={10.4099/math1924.6.147},
}

%\bib{Rotthaus}{article}{
%   author={Rotthaus, Christel},
%   title={Excellent rings, Henselian rings, and the approximation property},
%   journal={Rocky Mountain J. Math.},
%   volume={27},
%   date={1997},
%   number={1},
%   pages={317--334},
%   issn={0035-7596},
%   review={\MR{1453106}},
%   doi={10.1216/rmjm/1181071964},
%}

%\bib{RotthausGerman}{article}{
%   author={Rotthaus, Christel},
%   title={Komplettierung semilokaler quasiausgezeichneter Ringe},
%   language={German},
%   journal={Nagoya Math. J.},
%   volume={76},
%   date={1979},
%   pages={173--180},
%   issn={0027-7630},
%   review={\MR{550860}},
%}

%\bib{Trotter}{article}{
%    author={Trotter, W.},
%    author={Graham (ed.), R. L.},
%    author={Gr\"otschel (ed.), M.},
%    author={Lov\'asz (ed.), L.},
%    title={Partially Ordered Sets},
%    journal={Handbook of Combinatorics},
%    pages={433-480},
%    date={1995},
%    publisher={North Holland},
%}

\bib{wiegand}{article}{
   author={Wiegand, Roger},
   author={Wiegand, Sylvia},
   title={Prime ideals in Noetherian rings: a survey},
   conference={
      title={Ring and module theory},
   },
   book={
      series={Trends Math.},
      publisher={Birkh\"auser/Springer Basel AG, Basel},
   },
   isbn={978-3-0346-0006-4},
   date={2010},
   pages={175--193},
   review={\MR{2744272}},
   doi={10.1007/978-3-0346-0007-1\_13},
}

%\bib{WiegandEXPO}{book}{
%     AUTHOR = {Wiegand, R.},
%     AUTHOR = {Wiegand, S.},
%     TITLE = {Prime ideals in {N}oetherian rings: a survey},
% BOOKTITLE = {Ring and module theory},
%    SERIES = {Trends Math.},
%     PAGES = {175--193},
% PUBLISHER = {Birkh\"{a}user/Springer Basel AG, Basel},
 %     YEAR = {2010},
   %MRCLASS = {13E05 (13F20 13F25 13J15)},
  %MRNUMBER = {2744272},
%MRREVIEWER = {Marco Fontana},
       %DOI = {10.1007/978-3-0346-0007-1\_13},
       %URL = {https://doi.org/10.1007/978-3-0346-0007-1_13},
%    }

\end{biblist}
\end{bibdiv}

\end{document}